\documentclass[a4paper,11pt]{article}
\usepackage[latin1]{inputenc}
\usepackage[english]{babel}
\usepackage{amsmath}
\usepackage{amsfonts}
\usepackage{amssymb}
\usepackage{epsfig}
\usepackage{amsopn}
\usepackage{amsthm}
\usepackage{color}
\usepackage{graphicx}
\usepackage{subfigure}
\usepackage{enumerate}
\newtheorem{theorem}{Theorem}[section]
\newtheorem{corollary}[theorem]{Corollary}
\newtheorem{lemma}[theorem]{Lemma}
\newtheorem{proposition}[theorem]{Proposition}

\newtheorem{remark}[theorem]{Remark}

\newtheorem*{theorem*}{Theorem}
\newtheorem*{lemma*}{Lemma}
\newtheorem*{remark*}{Remark}
\newtheorem*{definition*}{Definition}
\newtheorem*{proposition*}{Proposition}
\newtheorem*{corollary*}{Corollary}
\numberwithin{equation}{section}
\newcommand{\real}{\mathbb{R}}

\let\ced=\c         

\def\qed{\,\unskip\kern 6pt \penalty 500
\raise -2pt\hbox{\vrule \vbox to8pt{\hrule width 6pt
\vfill\hrule}\vrule}\par}
\definecolor{darkblue}{rgb}{0.05, .05, .65}
\definecolor{darkgreen}{rgb}{0.1, .65, .1}
\definecolor{darkred}{rgb}{0.8,0,0}
\newcommand{\beqn}{\begin{equation}}
\newcommand{\eeqn}{\end{equation}}
\newcommand{\bear}{\begin{eqnarray}}
\newcommand{\eear}{\end{eqnarray}}
\newcommand{\bean}{\begin{eqnarray*}}
\newcommand{\eean}{\end{eqnarray*}}
\begin{document}

\title{\huge \bf Peaking self-similar solutions and continuation after blow-up for a quasilinear Hardy-H\'enon equation}

\author{
\Large Razvan Gabriel Iagar\,\footnote{Departamento de Matem\'{a}tica
Aplicada, Ciencia e Ingenieria de los Materiales y Tecnologia
Electr\'onica, Universidad Rey Juan Carlos, M\'{o}stoles,
28933, Madrid, Spain, \textit{e-mail:} razvan.iagar@urjc.es},\\
[4pt] \Large Ariel S\'{a}nchez,\footnote{Departamento de Matem\'{a}tica
Aplicada, Ciencia e Ingenieria de los Materiales y Tecnologia
Electr\'onica, Universidad Rey Juan Carlos, M\'{o}stoles,
28933, Madrid, Spain, \textit{e-mail:} ariel.sanchez@urjc.es}\\
[4pt]
 \Large Erik Sarri\'{o}n-Pedralva,\footnote{Departamento de Matem\'{a}tica
	Aplicada, Ciencia e Ingenieria de los Materiales y Tecnologia
	Electr\'onica, Universidad Rey Juan Carlos, M\'{o}stoles,
	28933, Madrid, Spain, \textit{e-mail:} erik.sarrion@urjc.es}\\
[4pt] }
\date{}
\maketitle

\begin{abstract}
We consider the quasilinear Hardy-H\'enon equation
$$
u_t=\Delta u^m+|x|^{\sigma}u^p, \quad (x,t)\in\mathbb{R}^N\times(0,T),
$$
in the range of exponents and dimension
$$
N\geq3, \quad \sigma\in(-2,\infty), \quad m>1, \quad p>p_S(\sigma)=\frac{m(N+2\sigma+2)}{N-2}.
$$
We establish sharp ranges of exponents $(m,p,\sigma)$ guaranteeing existence of \emph{peaking solutions} in backward self-similar form 
$$
u(x,t)=(T-t)^{-\alpha}f(|x|(T-t)^{-\beta}).
$$ 
By peaking solutions we understand solutions blowing up at a finite time $T\in(0,\infty)$ but allowing for a continuation after the blow-up time. As a remarkable outcome, a number of ranges of exponents $\sigma$ and $m$ for which peaking solutions exist for any $p>p_S(\sigma)$ are deduced, contrasting to the previously considered exponent $\sigma=0$, where a Lepin-type critical exponent $p_L$ provides an upper bound for $p$.
\end{abstract}

\noindent {\bf Mathematics Subject Classification 2020:} 35A24, 35B33, 35B44, 35B60, 35C06, 35K57.

\smallskip

\noindent {\bf Keywords and phrases:} critical exponents, finite time blow-up, peaking solutions, continuation after blow-up, self-similar solutions, Hardy-H\'enon equations.

\section{Introduction}

The goal of this paper is to address the question of establishing ranges of existence of self-similar solutions of \emph{peaking type} for the following quasilinear parabolic Hardy-H\'enon equation
\begin{equation}\label{eq1}
u_t=\Delta u^m+|x|^{\sigma}u^p, \quad (x,t)\in\mathbb{R}^N\times(0,T),
\end{equation}
with $N\geq3$ and in the range of exponents 
\begin{equation}\label{range.exp}
	\sigma\in(-2,\infty), \quad p>p_S(\sigma)>m>1.
\end{equation}
Here, $p_S(\sigma)$ denotes the Sobolev critical exponent 
\begin{equation}\label{pS}
	p_S(\sigma):=\frac{m(N+2\sigma+2)}{N-2}.
\end{equation}
A solution $u$ to Eq. \eqref{eq1} is said to be of \emph{peaking type} if there is $T\in(0,\infty)$ such that 

$\bullet$ $u$ blows up at time $T$, that is, $u(t)\in L^{\infty}(\mathbb{R}^N)$ for any $t\in(0,T)$, but $u$ becomes unbounded as $t\to T$.

$\bullet$ there exists a solution $\widetilde{u}$ to Eq. \eqref{eq1}, defined for $s\in(T,\infty)$, such that $\widetilde{u}(s)\in L^{\infty}(\mathbb{R}^N)$ for any $s\in(T,\infty)$ and 
$$
\lim\limits_{t\nearrow T}u(x,t)=\lim\limits_{s\searrow T}\widetilde{u}(x,s), \quad x\in\mathbb{R}^N.
$$

In other words, $\widetilde{u}$ has as initial trace at time $T$ exactly the blow-up profile of $u$ at time $T$. Thus, by peaking solutions we mean solutions that can be continued in a bounded, non-singular way immediately after the blow-up time. Such solutions have been discovered first for a semilinear reaction-diffusion equation with exponential source term in \cite{LT93}, and (to the best of our knowledge) the terminology of peaking solutions stems from the seminal paper by Galaktionov and V\'azquez \cite{GV97}, where the property of continuation after the blow up time has been investigated for solutions to the standard, homogeneous case of Eq. \eqref{eq1} (with $\sigma=0$).

Eq. \eqref{eq1} is a quasilinear reaction-diffusion equation, whose main feature is the competition between the effect of the mass-conservative diffusion term and the mass-increasing source term. It is a well-known fact that this competition might lead to the formation of singularities in finite time and this blow-up phenomenon has been already noticed and analyzed in the study of the spatially homogeneous equation
\begin{equation}\label{eq1.hom}
	u_t=\Delta u^m+u^p,
\end{equation}
see for example the monograph \cite{S4} dedicated to it in its main part. The main novelty of Eq. \eqref{eq1} is the presence of an unbounded weight $|x|^{\sigma}$ pondering the reaction effect, which can be either unbounded at infinity or singular at the origin, in dependence on the sign of $\sigma$. 

In recent years, equations such as Eq. \eqref{eq1}, where the source term has a power of $x$ as a factor, has been referred in literature as (parabolic) Hardy-H\'enon equations. This name is due, on the one hand, to the paper \cite{He73}, where the elliptic counterpart of the semilinear parabolic equation
\begin{equation}\label{eq0}
	u_t=\Delta u+|x|^{\sigma}u^p, \quad (x,t)\in\real^N\times(0,T),
\end{equation}
is considered as a model in astrophysics and analyzed numerically, and, on the other hand, to the paper \cite{BG84}, where Eq. \eqref{eq0} with $\sigma=-2$ and $p=1$ is considered and it is proved that existence or non-existence of weak solutions is strongly related to the best constant of Hardy's inequality. 

Two already ``classical" works dedicated to Eq. \eqref{eq0} (and to similar equations with more general weights) are \cite{Pi97, Pi98}, where an analysis of the maximum existence time and the life-span of solutions is performed. In the same period, the Fujita-type exponent, separating the ranges of non-existence and existence of non-trivial global solutions, has been identified for both Eqs. \eqref{eq0} and \eqref{eq1} in \cite{Qi98}. A number of works dedicated to the analysis of Eq. \eqref{eq0} have been published in the last decade, see for example \cite{BSTW17, CIT21, CIT22, CITT24} (to quote but a few) and references therein. In these works, existence, uniqueness and analytical properties of solutions to Eq. \eqref{eq0} are considered, looking in particular for optimal spaces for initial conditions or optimal regularity of the solutions. Eq. \eqref{eq1} and more general variations of it have been studied in \cite{AT05, Su02}, where conditions for finite time blow-up or global existence of a solutions are established and blow-up rates are deduced. Let us also quote here the papers \cite{SU21, SU24}, dealing with a finer characterization of finite time blow-up at infinity both for solutions to Eq. \eqref{eq0} and \eqref{eq1}, and \cite{MS21} giving a constructive example of blow-up of type II for solutions to Eq. \eqref{eq1} with $p$ sufficiently large. 

In the last years, two of the authors and their collaborators have started a long-term project of developing the mathematical theory and the large time behavior of solutions to parabolic Hardy-H\'enon equations. This study had started with a full analysis of self-similarity for such equations, depending on the ranges of $m$, $p$ and $\sigma$. Let us quote here, as most interesting, the results in \cite{ILS24, IL25, IMS23, IS21, IS22, IS25a, IS25b}, where radially symmetric self-similar solutions are characterized with respect to their form and asymptotic behavior as $|x|\to\infty$ in different ranges of parameters. The well-posedness and other qualitative properties of solutions in the range $1<p<m$ and $\sigma>0$ have been derived in \cite{ILS24b}, while a rather complete general theory of the singular case $\sigma\in(-2,0)$ in Eq. \eqref{eq1}, including well-posedness and optimal regularity, conditions for blow-up or global existence and large time behavior of global solutions in the range where no solution blows up, is established in \cite{IL26}.

\section{Main results}\label{subsec.mainresults}

As discussed in the previous paragraph, we devote our attention to self-similar solutions of \emph{peaking type} to Eq. \eqref{eq1}. It is a well-established fact that self-similar solutions play a fundamental role in understanding the dynamic properties and the large time behavior of solutions to nonlinear diffusion equations and, thus, their classification is an important first step towards a finer analysis of the behavior of general solutions near the blow-up time. In our case, we consider radially symmetric backward self-similar solutions in the form 
\begin{equation}\label{backward.SS}
	u(x,t)=(T-t)^{-\alpha}f(\xi), \quad \xi=|x|(T-t)^{-\beta}.
\end{equation}
Plugging the ansatz \eqref{backward.SS} into Eq. \eqref{eq1} and equating the different powers of $T-t$ appearing in its terms, we readily infer that the self-similar exponents are given by
\begin{equation}\label{SSexp}
	\alpha=\frac{\sigma+2}{\sigma(m-1)+2(p-1)}, \quad \beta=\frac{p-m}{\sigma(m-1)+2(p-1)},
\end{equation}
while the profiles $f$ are solutions to the strongly non-autonomous second order differential equation
\begin{equation}\label{ODE.backward}
	(f^m)''(\xi)+\frac{N-1}{\xi}(f^m)'(\xi)-\alpha f(\xi)-\beta\xi f'(\xi)+\xi^{\sigma}f^p(\xi)=0.
\end{equation}
Since our object of study are solutions of peaking type, we restrict ourselves to the range \eqref{range.exp}. Indeed, for $m\geq1$ and $\sigma=0$ it is a well-known fact that blow-up is complete when $m<p\leq p_S(0)$, see for example \cite[Theorem 27.2 and Remark 27.8]{QS} (for $m=1$) and \cite[Sections 7-8]{GV97} (for $m>1$), while a similar conclusion is easily deduced at least at the level of self-similar solutions in the form \eqref{backward.SS} for general $\sigma\in(-2,\infty)$, $m>1$ and $p\in(m,p_S(\sigma))$ from the analysis performed in \cite{IS25b} (we will make this point more precise later). Thus, we assume $p>p_S(\sigma)$, recalling that $p_S(\sigma)$ is defined in \eqref{pS}. Let us introduce at this point the \emph{Joseph-Lundgren critical exponent} defined as
\begin{equation}\label{pJL}
	p_{JL}(\sigma):=\begin{cases}m\frac{(N-2)^2-2(N+\sigma)(\sigma+2)+2(\sigma+2)\sqrt{(2N+\sigma-2)(\sigma+2)}}{(N-2)(N-4\sigma-10)}, & \mbox{if} \ \sigma<\frac{N-10}{4}, \\[1mm]
	\infty, & \mbox{otherwise}.  
	\end{cases}
\end{equation}
For $\sigma=0$, this exponent has been introduced for elliptic equations in \cite{JL73} and generalized to elliptic Hardy-H\'enon equations in \cite{GS81}. The critical exponent $p_{JL}(\sigma)$ plays an important role in the analysis performed in \cite{FT00, IMS26} in the semilinear case $m=1$. Finally, for $m>1$ but $\sigma=0$, $p_{JL}(0)$ strongly influences the analysis of self-similarity and continuation after blow-up in \cite[Sections 12-14]{GV97}. The general definition in \eqref{pJL} extends all these previous ones. Let us observe here that, for $\sigma\in(-2,0)$, $p_{JL}(\sigma)$ could be finite also in dimension $N\leq10$, in contrast to the well-known case when $\sigma=0$.

We also introduce the following unbounded stationary solution to Eq. \eqref{eq1} 
\begin{equation}\label{estacionaria}
\begin{split}
	&U_s(x)=C(\sigma)\lvert x\rvert^{-(\sigma+2)/(p-m)},\quad {\rm with} \\ &C(\sigma)=\left(\dfrac{m(\sigma+2)[(N-2)p-m(N+\sigma)]}{(p-m)^2}\right)^{1/(p-m)}.
\end{split}
\end{equation}
In this notation, our first result establishes a criterion for the existence of peaking solutions. 
\begin{theorem}\label{th.peaking}
	Let $m$, $p$ and $\sigma$ be as in \eqref{range.exp}. Then any self-similar solution in the form \eqref{backward.SS} to Eq. \eqref{eq1} whose profile satisfies the decay behavior 
	\begin{equation}\label{beh.inf}
		\lim\limits_{\xi\to\infty}\xi^{(\sigma+2)/(p-m)}f(\xi)=K, \quad 0<K<C(\sigma),
	\end{equation}
	can be continued after the blow-up time $T$ (and is thus a peaking solution). Moreover, the continuation $\widetilde{u}$ is also a self-similar solution, but in the forward form
\begin{equation}\label{SSforward}
	 	\widetilde{u}(x,t)=t^{-\alpha}g(|x|t^{-\beta}), \quad (x,t)\in\mathbb{R}^N\times(T,\infty),
\end{equation}
	 where $\alpha$ and $\beta$ are the exponents defined in \eqref{SSexp}.
\end{theorem}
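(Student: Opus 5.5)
The plan is to compute the blow-up profile of $u$ at time $T$, which is a pure power of $|x|$, and then to build a forward self-similar solution whose initial trace is exactly that power. The continuation $\widetilde{u}$ will be taken with time shifted by $T$, i.e. $\widetilde{u}(x,s)=(s-T)^{-\alpha}g(|x|(s-T)^{-\beta})$; this matches \eqref{SSforward} up to a translation in time, since the equation is autonomous in $t$. The three steps are: (i) identify the limit of $u$ as $t\to T$, (ii) reduce the continuation problem to an ODE for $g$ with prescribed behaviour at infinity, and (iii) solve that ODE by a shooting argument.

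\emph{Step (i).} Fix $x\neq0$ and note that $\xi=|x|(T-t)^{-\beta}\to\infty$ as $t\to T$, because $\beta>0$ in the range \eqref{range.exp}. Writing
$$
(T-t)^{-\alpha}f(\xi)=|x|^{-(\sigma+2)/(p-m)}\,\xi^{(\sigma+2)/(p-m)}f(\xi),
$$
and using the identity $\alpha=\beta(\sigma+2)/(p-m)$ from \eqref{SSexp}, the decay \eqref{beh.inf} gives
$$
\lim_{t\to T}u(x,t)=K|x|^{-(\sigma+2)/(p-m)}=:u_T(x).
$$
Since $p>p_S(\sigma)$, the exponent satisfies $(\sigma+2)/(p-m)<(N-2)/m\le N$. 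Hence $u_T\in L^1_{\rm loc}(\mathbb{R}^N)$, and it is a singular but admissible initial datum.

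\emph{Step (ii).} Insert the forward ansatz $\widetilde{u}(x,s)=(s-T)^{-\alpha}g(\eta)$ with $\eta=|x|(s-T)^{-\beta}$. The profile must then solve
$$
(g^m)''+\frac{N-1}{\eta}(g^m)'+\alpha g+\beta\eta g'+\eta^{\sigma}g^p=0,
$$
together with $g'(0)=0$, $g(0)=a>0$, and the condition $\eta^{(\sigma+2)/(p-m)}g(\eta)\to K$ as $\eta\to\infty$. By the same scaling computation as in Step (i), this last condition is precisely what makes $\widetilde{u}(\cdot,s)\to u_T$ pointwise as $s\searrow T$. Moreover, $g$ bounded on $[0,\infty)$ implies $\widetilde{u}(s)\in L^\infty(\mathbb{R}^N)$ for every $s>T$, which is the non-singular continuation required in the definition of a peaking solution. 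It is therefore enough to show that for each $K\in(0,C(\sigma))$ there exists $a>0$ whose positive, globally defined profile $g_a$ has limit $\lim_{\eta\to\infty}\eta^{(\sigma+2)/(p-m)}g_a(\eta)=K$.

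\emph{Step (iii): shooting in $a$, the main obstacle.} I would argue as follows.
\begin{itemize}
\item For every $a>0$, $g_a$ stays positive for all $\eta$. The term $\alpha g+\beta\eta g'=\eta^{1-\alpha/\beta}\bigl(\eta^{\alpha/\beta}g\bigr)'$ acts with a favourable sign, so one shows $\eta^{\alpha/\beta}g$ is eventually monotone. I would do this with a phase-space change of variables as in \cite{IS25b}, reducing the ODE to an autonomous system in which the decay $\eta^{-(\sigma+2)/(p-m)}$ corresponds to a one-parameter family of orbits, the parameter being the limit $K$.
\item The map $a\mapsto K(a)$ is continuous.
\item As $a\to0$, $g_a\approx a\,G(\eta a^{(p-m)/(\sigma+2)}\cdot)$, where $G$ is a small-data profile of the pure porous medium scaling. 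This should give $K(a)\to0$.
\item As $a\to\infty$, rescaling gives convergence to the regular stationary solution of $\Delta w^m+|x|^{\sigma}w^p=0$. In the supercritical range $p>p_S(\sigma)$ this solution behaves like $U_s$ at infinity, which should force $K(a)\to C(\sigma)$.
\end{itemize}
The intermediate value theorem then produces $a$ with $K(a)=K$ for every $K\in(0,C(\sigma))$.

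The delicate points are the two limits $a\to0$ and $a\to\infty$ and the continuity of $K(a)$. In particular one must rule out oscillation of the regular stationary solution around $U_s$; this can occur when $p<p_{JL}(\sigma)$, but it only produces values of $K(a)$ near $C(\sigma)$ and does not affect surjectivity onto $(0,C(\sigma))$, since I only need the limits and not monotonicity of $a\mapsto K(a)$. If the limit $a\to\infty$ proves intractable, the fallback is to read off the whole range of attainable $K$ from the explicit phase-space analysis of the forward ODE in \cite{IS25b}.
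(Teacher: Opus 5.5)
Your Steps (i) and (ii) are fine. The pointwise limit $K|x|^{-(\sigma+2)/(p-m)}$ is computed exactly as in the paper, and a bounded forward profile with that tail, time-shifted by $T$, would indeed suffice. The genuine gap is Step (iii), which carries the whole theorem and is only heuristic.

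To run the intermediate value theorem, $a\mapsto K(a)$ must be defined and continuous on all of $(0,\infty)$. That requires every regular forward profile $g_a$ to stay positive on $(0,\infty)$ and to enter the slow-decay regime $\eta^{(\sigma+2)/(p-m)}g_a\to K(a)>0$. Nothing in your sketch excludes $g_a$ reaching zero at a finite $\eta_0$ (an interface with $(g_a^m)'(\eta_0)=0$, or a sign change with $(g_a^m)'(\eta_0)<0$), or decaying faster. The identity $\alpha g+\beta\eta g'=\beta\eta^{1-\alpha/\beta}(\eta^{\alpha/\beta}g)'$ gives no fixed sign, and eventual monotonicity of $\eta^{\alpha/\beta}g$ presupposes positivity up to that point. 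This global statement must use $p>p_S(\sigma)$, which your argument never does. Already for $m=1$, $\sigma=0$ and subcritical $p$, regular forward profiles change sign for large $a$, and positive fast-decaying ones exist (Haraux--Weissler).

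Likewise, $K(a)$ is read off at a non-hyperbolic point at infinity, so its continuity needs a center-manifold argument. The claim $K(a)\to C(\sigma)$ does not follow from the inner rescaling either. Convergence to the regular Lane--Emden--H\'enon solution controls $g_a$ only for $\eta=O(a^{-(p-m)/(\sigma+2)})$, whereas $K(a)$ is decided at $\eta\gtrsim1$. You would need shadowing past the saddle whose unstable orbit is $U_s$, plus continuity of $K$ near that orbit. Your fallback does not close this: \cite{IS25b} and \eqref{PSsyst} concern backward profiles, and the forward system has the opposite sign in the $X$-terms. The paper itself remarks that forward profiles with every $K\in(0,C(\sigma))$ are a new by-product of this theorem.

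The paper avoids the global forward ODE with a PDE argument. It takes the datum $u_0=K|x|^{-(\sigma+2)/(p-m)}$ and builds the solution as the monotone limit of problems with truncated weight and truncated datum. The hypothesis $K<C(\sigma)$ is used exactly to place $u_0$ below $U_s$, which then bounds every approximation; the limit is the minimal solution. Since the equation and $u_0$ are both invariant under $\lambda^{\alpha}u(\lambda^{\beta}x,\lambda t)$, minimality forces forward self-similarity with no shooting. Boundedness at the origin comes from comparison on $\{|x|\le\xi(A)t^{\beta}\}$ with a single forward solution $t^{-\alpha}g(|x|t^{-\beta};A)$, whose profile meets $U_s$ (or $V_\kappa$) at $\xi(A)$. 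This needs only local information on one profile.

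Your route, if completed, would describe the continuation explicitly as a regular profile $g_a$. As written, though, the step producing that profile is missing.
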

Let us emphasize here that the condition $K\in(0,C(\sigma))$ in \eqref{beh.inf} is fundamental, as we shall see in the proof and has been also noticed, for $\sigma=0$, in \cite{GV97}.

Theorem \ref{th.peaking} justifies the name of peaking solutions given to the self-similar solutions satisfying \eqref{beh.inf} and provides a path towards proving their existence. We will thus focus in the rest of the paper on establishing (most likely optimal) ranges of existence of such peaking solutions. The first such result is very sharp.
\begin{theorem}\label{th.pSpJL}
	Let $m$, $p$ and $\sigma$ be as in \eqref{range.exp} and assume that $p\in(p_S(\sigma),p_{JL}(\sigma))$. Then, Eq. \eqref{eq1} admits infinitely many self-similar solutions of peaking type in the form \eqref{backward.SS}. The profiles of these solutions satisfy $f(0)>0$, $f'(0)=0$ and the tail behavior \eqref{beh.inf} as $\xi\to\infty$.
\end{theorem}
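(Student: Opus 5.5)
We argue by a shooting method in the parameter $a=f(0)>0$ for \eqref{ODE.backward} with $f'(0)=0$, combined with a phase-space analysis. First, the behavior \eqref{beh.inf} is generic for profiles with fast/slow tail. Writing $f(\xi)=\xi^{-(\sigma+2)/(p-m)}w(\ln\xi)$, the ODE becomes an autonomous-like system in $\eta=\ln\xi$ up to terms carrying the factor $\xi^{2-\sigma(m-1)/(p-m)\cdots}$; more precisely, the drift terms $-\alpha f-\beta\xi f'$ dominate as $\xi\to\infty$ because $\beta>0$. A direct integration then shows that any positive solution with $f'<0$ eventually satisfies $\xi^{\alpha/\beta}f(\xi)\to K\ge 0$, and $\alpha/\beta=(\sigma+2)/(p-m)$. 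Positive profiles with $K>0$ form an open set of tails, and $K=0$ corresponds to faster (non-generic) decay. So the task is to produce, for infinitely many $a$, global positive solutions whose limit $K$ lies in $(0,C(\sigma))$.

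The second step is to compare with the singular stationary solution $U_s$, whose profile $C(\sigma)\xi^{-(\sigma+2)/(p-m)}$ solves \eqref{ODE.backward} exactly, since the drift terms cancel on it. I would pass to the variables $F=f^m$, $s=\ln\xi$, $\phi(s)=\xi^{m(\sigma+2)/(p-m)}f^m/C(\sigma)^m$. Linearizing around $\phi\equiv1$ as $s\to-\infty$ (where the drift terms are negligible), I obtain the Emden--Fowler linearization with characteristic polynomial whose discriminant is negative exactly when $p_S(\sigma)<p<p_{JL}(\sigma)$; this is precisely how \eqref{pJL} arises. Hence, in this range, the stationary point representing $U_s$ is a stable/unstable focus in the inner region. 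For $a\to\infty$, a rescaling $f(\xi)=a\,\varphi(a^{(p-m)/(\sigma+2)}\xi)$ shows that $f$ converges on compacts, in the rescaled variable, to the regular ground-state-type solution of the stationary elliptic Hardy--H\'enon equation $\Delta\varphi^m+|y|^\sigma\varphi^p=0$. For $p>p_S(\sigma)$ this solution is positive, decreasing, and behaves like $U_s$ at infinity, oscillating around it because of the focus. Thus the number of intersections of $f$ with $U_s$ tends to infinity as $a\to\infty$.

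Third, I would use a continuity/intersection-counting argument. I would show that each solution with given $a$ is either global positive with some $K\in[0,\infty)$, or vanishes at finite $\xi_0$ (the interface/crossing case). Comparison with $U_s$ at infinity gives that if $K>C(\sigma)$ (or $K=C(\sigma)$ in the non-degenerate sense) then $f$ stays above $U_s$ eventually. The number of intersections with $U_s$ can only change through $\xi=\infty$, which gives a "boundary" value where $K=C(\sigma)$, or through $\xi=0$, which is impossible by regularity. For small $a$, an energy/comparison argument shows the solution stays below $U_s$ and decays with some $K<C(\sigma)$. Within each interval of $a$ between consecutive changes of the intersection number, the set of $a$ giving $K\in(0,C(\sigma))$ is open and nonempty. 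Solutions with a finite zero or $K=0$ occur only at isolated "separatrix" parameters, since the set of $a$ for which $f$ vanishes or decays fast is closed and separates. This yields infinitely many distinct $a$ with $0<K<C(\sigma)$, hence infinitely many peaking solutions by Theorem \ref{th.peaking}.

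The main obstacle is the third step: rigorously controlling the limit $K$ along the shooting parameter. One needs to exclude profiles crossing zero with $K$ undefined, and to prove that within each intersection class both the cases $K<C(\sigma)$ and the transition at $K=C(\sigma)$ actually occur. I would try first to establish monotonicity of the intersection number with $U_s$ via a Sturm-type zero-number argument applied to the linear equation satisfied by $f^m-U_s^m$, together with uniform estimates at infinity from the drift-dominated regime.
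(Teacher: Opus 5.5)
Your large-$a$ step is sound and matches the paper. The rescaled limit is the stationary problem, i.e.\ the orbit $l_\infty\subset\{X=0\}$ from $P_0$ to $P_2$. Since $P_2$ is a focus there for $p_S(\sigma)<p<p_{JL}(\sigma)$, the number of oscillations about $U_s$ is unbounded as $a\to\infty$.

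The gap is that your first and third steps invert the generic/non-generic structure of the shooting.

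\textbf{The tail is not open.} For $m>1$, the tail $f\sim K\xi^{-(\sigma+2)/(p-m)}$ with $K\in(0,\infty)$ means landing on the two-dimensional center manifold of $Q_1$. That point has a one-dimensional unstable direction, so landing on it is a codimension-one event. Along the one-parameter family $\{l_k\}$ (your parameter $a$) it therefore occurs only at isolated parameters.

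\textbf{The open behavior is the crossing case.} The open, generic outcome is what you call the crossing case: $f(\xi_0)=0$ with $(f^m)'(\xi_0)<0$, i.e.\ the stable node $Q_3$. This is not an admissible profile.

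\textbf{Your tail classification is incomplete.} It misses the slow orbit into $Q_{\gamma_0}$. For $\sigma>0$ that profile is decreasing, yet $\xi^{(\sigma+2)/(p-m)}f\to\infty$.

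\textbf{The small-$a$ claim is false.} For small $a$ the orbit shadows $l_0\subset\{Z=0\}$ up to $P_3$, where $f\sim c\,\xi^{2/(m-1)}$ grows. So $f$ rises above $U_s$ and then, generically, hits zero. It does not stay below $U_s$ with $K<C(\sigma)$.

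\textbf{Consequences for your counting argument.} Two of your assertions fail. First, that in each interval between changes of the intersection number the set with $K\in(0,C(\sigma))$ is open and nonempty. Second, that the count changes only where $K=C(\sigma)$. In fact:
\begin{itemize}
\item inside such intervals the profiles hit zero;
\item no orbit from $P_0$ has $K=C(\sigma)$, since the only center-manifold orbit with that value is the line \eqref{estacionariaLine}, which issues from $P_2$;
\item the count can change only at the isolated parameters where the orbit does not end at $Q_3$, in particular when it lands on $Q_1$ from the side $K<C(\sigma)$ — exactly the peaking profiles you want.
\end{itemize}

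\textbf{What is missing, and how the paper supplies it.} You need a device that selects boundary parameters of the open set of zero-hitting profiles and certifies that they land strictly on the side $K<C(\sigma)$. The paper counts crossings of the plane $\{Y=-(\sigma+2)/(p-m)\}$, i.e.\ critical points of $\xi^{(\sigma+2)/(p-m)}f$. On that plane $\dot Y=Z(P_2)-Z$, so crossings are transversal and their direction is fixed by the position relative to $U_s$.
\begin{itemize}
\item It defines two open sets: $\mathcal{E}_1$ (at least $2j+2$ crossings) and $\mathcal{G}_1$ (exactly $2j+1$ crossings, then $Q_3$). They are nonempty because of $l_\infty$ and $l_0$ respectively.
\item It takes $k_1=\inf\mathcal{E}_1$. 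The local behavior near $Q_3$, $Q_1$ and $Q_{\gamma_0}$ forces $l_{k_1}$ to have an odd number of crossings without entering $Q_3$.
\item Hence $l_{k_1}$ stays eventually in $\{Y<-(\sigma+2)/(p-m)\}$, and Proposition \ref{prop.left} gives $l_{k_1}=\mathcal{C}_s$ with $s<Z(P_2)$, i.e.\ $K<C(\sigma)$.
\item Repeating the construction inside $\mathcal{E}_1$ produces infinitely many such profiles.
\end{itemize}

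An intersection-number argument with $U_s$ could replace this. It would work only after reversing your topology and analysing how the count changes at each non-generic endpoint: $Q_1$ from either side, and $Q_{\gamma_0}$.
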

This result directly generalizes analogous previous results established in \cite{L90} for $m=1$ and $\sigma=0$, in \cite{FT00} for $m=1$ and $\sigma>0$, and in \cite[Theorem 12.1]{GV97} for $m>1$ and $\sigma=0$. Let us also mention here that, in the semilinear case $m=1$ and $\sigma=0$, it is shown in \cite{PQ20} that there are countably many peaking solutions in the corresponding range. As a noticeable difference with respect to the proofs given in the previously mentioned references, our proof is based on understanding the exponent $p_{JL}(\sigma)$ as a bifurcation parameter in a dynamical system obtained from \eqref{ODE.backward} after a suitable change of variable and exploit the behavior of the system before the bifurcation occurs to prove the infinity of different solutions. A numerical viewpoint of Theorem \ref{th.pSpJL} is given in Figure \ref{fig1} in Section \ref{sec.num}.

\medskip

For the description of the next results, we work from now on with exponents $p$ such that $p>p_{JL}(\sigma)$. Let us introduce the following critical exponents 
\begin{equation}\label{critexpsigma}
	\sigma_1(N):=\dfrac{N-10}{4},\qquad \sigma_2(N):=\dfrac{2(N-10)}{N-2},
\end{equation}
observing that $p>p_{JL}(\sigma)$ implies that $p_{JL}(\sigma)<\infty$, hence $\sigma<\sigma_1(N)$ and that 
\begin{equation*}
	\sigma_1(N)-\sigma_2(N)=\dfrac{(N-10)^2}{4(N-2)}>0.\end{equation*}

In order to introduce the following results, let us first describe the general context by some precedents. In the semilinear case $m=1$ and $\sigma=0$, Lepin proved in \cite{L89, L90} (see also a more precise result in \cite{PQ20}) that, in dimension $N\geq11$, there exists an exponent $p_L(0)=(N-4)/(N-10)$ such that there is at least one self-similar solution of peaking type if $p_{JL}(0)\leq p<p_L(0)$. The question of optimality of $p_L(0)$ has been solved in the affirmative sense by Mizoguchi in \cite{M04, M09}, as it was proved that there is no self-similar solution in backward form for $p>p_L(0)$, while the non-existence for $p=p_L(0)$ appears to be still open (see for example \cite[Section 53a]{QS}).

The existence of a Lepin-type exponent has been extended to the porous medium case $m>1$ (still with $\sigma=0$) in \cite[Sections 11-12]{GV97}, and a similar problem of optimality of this exponent has been raised in \cite{GV02}. Two of the authors gave in \cite{IS25a} a partial negative answer to this open question, proving that, in fact, for any $m>1$ and any $p>p_S(\sigma)$ there are self-similar solutions in the backward form \eqref{backward.SS}, but not of peaking type. Finally, in the recent paper \cite{IMS26}, the question of the existence or not of a Lepin-type exponent $p$ limiting the range of existence of peaking-type self-similar solutions has been addressed, and it was proved that, if $\sigma\geq\sigma_2(N)$ there are peaking solutions for any $p>p_S(\sigma)$, while for $N\geq11$ and $\sigma\in(0,\sigma_2(N))$, a candidate exponent $p_L(\sigma)$ is explicitely identified. This result proves that the presence of a weight $|x|^{\sigma}$ strongly changes the general outcome with respect to the one for $\sigma=0$.

In view of this context, it is natural to address the same question of the optimal ranges of existence of peaking solutions in the form \eqref{backward.SS} in the far more general case of Eq. \eqref{eq1}. In the next results we detail the existence cases (that is, the ranges of parameters $(m,p,\sigma)$ for which there exists at least one solution of peaking type) and we make the convention that non-existence of peaking type self-similar solutions is the expected outcome in all the complementary ranges (although we will not dive into the seemingly very difficult problem of proving this non-existence in the current paper). Of course, expressing the ranges of existence is technically more tedious, but the analysis will also bring some interesting qualitative conclusions, as we shall discuss after the precise statements.

Let us suppose that $N>10$, that is, $\sigma_2(N)>0$. We introduce the following critical exponents:
\begin{equation}\label{sigmastar}
	\sigma_{*}(N):=-\frac{2(N-10)}{N+6}\in(-2,0), \quad \sigma^*(N):=\frac{2(N-1)(N-10)}{3(5N-2)}>0.
\end{equation}	
Let us also define the following numbers
\begin{equation}\label{m12}
	m_{1,2}:=1+\frac{4\sigma(N+2)\pm(N-10)\sqrt{(\sigma+2)(2N-2+\sigma)}}{2N^2-15N\sigma-22N+6\sigma+20},
\end{equation}
where $m_2$ corresponds to the plus sign, and the candidates for Lepin-type exponents
\begin{equation}\label{pL}
		p_{L}^{\pm}(\sigma):=\frac{m\left(\sigma^2+4(N-7)\sigma-12\right)-2(\sigma+2)(N-10)\pm(\sigma+2)\sqrt{P(m)}}{2(\sigma(N-2)-2(N-10))},
\end{equation}
where
\begin{equation}\label{PdePL}
	\begin{split}
			P(m):=& 4(N-2)(N-4\sigma-10)+m(-8N^2+44N(\sigma+2)-8(10+7\sigma))\\
			&+m^2(4N^2+(\sigma+6)^2-4N(10+7\sigma)).
		\end{split}
\end{equation}
Due to the coincidence, for either $m=1$ or $\sigma=0$, with the already well-established Lepin exponents introduced in \cite{L90, GV97, IMS26}, we will denote $p_L(\sigma):=p_L^{-}(\sigma)$. With this notation, we have the following result dedicated to the range $-2<\sigma<\sigma_2(N)$.
\begin{theorem}\label{th.sigmasmall}
Assume that $m$, $p$, $\sigma$ satisfy \eqref{range.exp} and that $N>10$. Then, there exists at least one peaking self-similar solution for any $p\in[p_{JL}(\sigma),p_L(\sigma))$ and each one of the following ranges for $(m,\sigma)$: 
\begin{itemize}
	\item if $-2<\sigma<\sigma_*(N)$ and $m_1\leq m\leq m_2$,
	\item if $11\leq N\leq17$, $\sigma^*(N)<\sigma<\sigma_2(N)$ and any $m>1$,
	\item if $N\geq11$, $\sigma_*(N)\leq\sigma<\min\{\sigma^*(N),\sigma_2(N)\}$ and $1<m\leq m_2$,
\end{itemize}
where $m_1$, $m_2$ are defined in \eqref{m12} and $\sigma_*(N)$, $\sigma^*(N)$ are defined in \eqref{sigmastar}. The profiles of all these peaking type solutions satisfy the decay behavior \eqref{beh.inf} as $\xi\to\infty$.
\end{theorem}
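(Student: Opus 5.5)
By Theorem \ref{th.peaking}, it suffices to produce, for each admissible $(m,p,\sigma)$, one profile $f$ solving \eqref{ODE.backward} with $f(0)>0$, $f'(0)=0$ and the tail \eqref{beh.inf} with $0<K<C(\sigma)$. I would follow the strategy of Lepin \cite{L90} and \cite[Sections 11--12]{GV97}, adapted to the weight as in \cite{IMS26}, and work in the phase space obtained by the Emden--Fowler type change of variables
\[
X=\frac{m}{\alpha}\xi^{-2}f^{m-1},\qquad Y=\frac{\xi (f^m)'}{f^m}, \qquad Z=\frac{1}{\alpha}\xi^{\sigma}f^{p-1},\qquad \eta=\ln\xi,
\]
or an equivalent choice, which turns \eqref{ODE.backward} into an autonomous three-dimensional system. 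The three relevant objects are:
\begin{itemize}
\item the critical point $P_0$ corresponding to $\xi=0$ with $f(0)>0$. A one-parameter family of trajectories, indexed by $f(0)=a>0$, emanates from it.
\item the critical point $P_s$ encoding the stationary solution $U_s$ from \eqref{estacionaria}.
\item the critical point $Q$ encoding profiles with $\xi^{(\sigma+2)/(p-m)}f(\xi)\to K$. Here the ratio $K/C(\sigma)$ is read off from the $Z$- (or $X/Z$-) coordinate of the limit.
\end{itemize}
Peaking profiles are exactly trajectories from $P_0$ entering $Q$ with $K<C(\sigma)$. Such trajectories form an open region, since $Q$ is an attractor in the corresponding directions.

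Step 1 is the local analysis at $P_s$. For $p\ge p_{JL}(\sigma)$ its linearization has only real eigenvalues, which is exactly the meaning of \eqref{pJL}. Hence the trajectories from $P_0$ no longer spiral around $P_s$, and the mechanism used for Theorem \ref{th.pSpJL} is lost.

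Step 2 is a shooting argument in $a$. For $a\to 0$ the trajectory is close to the one of the unforced problem (the reaction is negligible), so $f$ stays below $U_s$ and converges to $Q$ with small $K$. For $a\to\infty$, after rescaling, the profile approaches the regular stationary solution of $\Delta w^m+|x|^\sigma w^p=0$. Since $p>p_S(\sigma)$, this stationary solution is positive, lies below $U_s$, and approaches it from below as $|x|\to\infty$. I then compare the number of intersections of $f$ with $U_s$ (equivalently, sign changes of $Z-Z(P_s)$) in the two limits. A continuity argument on the sets
\[
A_k=\{a:\ f_a \text{ enters } Q \text{ with } K<C(\sigma) \text{ after } k \text{ intersections}\}
\]
then produces a parameter $a^*$ at the boundary of an open set. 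At $a^*$ the trajectory either goes to $P_s$ or enters $Q$ with $K<C(\sigma)$. The first alternative must be excluded.

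Step 3 is that exclusion. For $p_{JL}(\sigma)\le p$, the trajectory from $P_0$ arriving at $P_s$ must do so along one of the two real eigendirections. Following Lepin, I would show via a barrier that it can only approach along the slow (or the fast) direction, and that the relative order of the approach of the regular stationary solution forces a parity mismatch in intersection numbers exactly when $p<p_L(\sigma)$.

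Concretely, expand $f=U_s(1+c\,\xi^{-\lambda_\pm}+\dots)$, where $\lambda_\pm$ are the roots of the indicial equation at $P_s$. The condition $p<p_L(\sigma)$ should be equivalent to an inequality between $\lambda_-$ (or $\lambda_+$) and the decay rate $2+\sigma$-type exponent coming from the self-similar terms $\alpha f+\beta\xi f'$. I would verify this by computing that the boundary case, equality of these two exponents, is a quadratic equation in $p$ whose roots are $p_L^{\pm}(\sigma)$ in \eqref{pL}, with discriminant $(\sigma+2)^2P(m)$.

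The three ranges of $(m,\sigma)$ in the statement should arise precisely as the conditions ensuring three things:
\begin{itemize}
\item $P(m)\ge0$, which gives $m_1\le m\le m_2$ as the roots of $P$, i.e.\ \eqref{m12};
\item $p_L(\sigma)$ is real;
\item $p_L(\sigma)>p_{JL}(\sigma)$, so the interval is nonempty, with the sign of the denominator $\sigma(N-2)-2(N-10)$ fixed by $\sigma<\sigma_2(N)$.
\end{itemize}
The thresholds $\sigma_*(N),\sigma^*(N)$ come from where the leading coefficient of $P$ or the comparison with $p_{JL}$ changes sign. In the band $11\le N\le17$ with $\sigma>\sigma^*(N)$, $P(m)$ is positive for all $m>1$. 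This part is tedious but elementary algebra.

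The main obstacle will be Step 3: a rigorous argument that the limit trajectory cannot land on $P_s$, which requires a sharp barrier (a Sturmian intersection-comparison between $f_a$, the regular stationary solution and $U_s$) valid up to, but not including, $p_L(\sigma)$, including the endpoint $p=p_{JL}(\sigma)$, where the eigenvalues at $P_s$ coincide.
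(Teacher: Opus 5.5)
\textbf{There is a genuine gap: your derivation of the admissible $(m,\sigma)$-ranges is wrong, and that is where the content of the theorem lies.}

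The threshold is Lepin's three-zero criterion \eqref{condAB}, $A-\sqrt{A^2-4B}>4/\beta$, for the linearization \eqref{ylinear} around $g\equiv C(\sigma)$. Your ``equality of exponents'' computation is its squared form $R(p)=0$ from \eqref{comp0}, with roots $p_L^{\pm}(\sigma)$ and discriminant proportional to $P(m)$. Squaring is legitimate only when $A-4/\beta\geq0$, i.e.\ $p\geq p_1$ with $p_1$ as in \eqref{eq.p1}. For $\sigma<\sigma_2(N)$, $p_1>p_{JL}(\sigma)$ forces $p_1>p_L(\sigma)$, which kills the whole interval. So the restriction is $p_1\leq p_{JL}(\sigma)$, equivalently $Q(m)\leq0$ for the polynomial \eqref{polQ}:
\begin{itemize}
\item $m_{1,2}$ in \eqref{m12} are the roots of $Q$, not of $P$;
\item $\sigma^*(N)$ is the zero of the leading coefficient of $Q$, and $\sigma_*(N)$ is the zero of $Q(1)$;
\item the band $11\leq N\leq17$ is exactly where $\sigma^*(N)<\sigma_2(N)$, so that $Q(m)<0$ for all $m>1$.
\end{itemize}
Your three conditions impose nothing here. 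For $\sigma<\sigma_2(N)$, $R$ has positive leading coefficient and $R(p_{JL}(\sigma))<0$, so $P(m)>0$ and $p_L(\sigma)>p_{JL}(\sigma)$ hold automatically. At $\sigma=0$ (and $N>10$), $P(m)>0$ for every $m$, while the theorem needs $m\leq m_2=1+1/\sqrt{N-1}$. Take $N=20$, $m=2$, $p=3.15\in(p_{JL}(0),p_L(0))\approx(3.0985,3.2)$. All your conditions hold and $R(p)<0$, yet $p<p_1=3.6$, so $A-4/\beta<0$ and \eqref{condAB} fails. This is precisely the non-peaking experiment in Section \ref{sec.num}.

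\textbf{The forward shooting from $P_0$ also does not work as set up.}
\begin{itemize}
\item $Q_1$ is not an attractor. By Lemma \ref{lem.Q1QgQ5} it has a one-dimensional unstable manifold. Within the one-parameter family of orbits from $P_0$, those entering $Q_1$ are non-generic: they are boundary points of open sets of orbits ending at $Q_3$, as in the proof of Theorem \ref{th.pSpJL}. So your sets $A_k$ are not open.
\item Your $a\to0$ endpoint is wrong. The unforced orbit $l_0$ goes from $P_0$ to $P_3$ (Proposition \ref{prop.Z0}), where $f\sim c\,\xi^{2/(m-1)}$ grows. For small $f(0)$ the orbit shadows $l_0$ and then the unstable orbit of $P_3$, whose endpoint is not known a priori. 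Nothing says $f$ stays below $U_s$ with small $K$.
\item The regular stationary solution lies below $U_s$ only for $p\geq p_{JL}(\sigma)$, not for all $p>p_S(\sigma)$.
\item The alternative you plan to exclude in Step 3 is a non-issue. No orbit with $X>0$ can converge to $P_2$, whose stable manifold lies in $\{X=0\}$ (Lemma \ref{lem.P2}). By Remark \ref{rem.Ck}, the only orbit with $K=C(\sigma)$ is $U_s$ itself.
\item The genuine competitors are $Q_3$, $Q_{\gamma_0}$, and $Q_1$ with $K>C(\sigma)$. The proposed ``parity mismatch'' is never spelled out, so nothing separates a peaking orbit from these.
\end{itemize}

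\textbf{What the paper does instead.} It shoots backward along the center manifold of $Q_1$ in $k\in(0,Z(P_2))$.
\begin{itemize}
\item For small $k$, $\mathcal{C}_k$ shadows $Q_2\to P_1\to Q_1$ and crosses $\{Z=Z(P_2)\}$ exactly twice.
\item For $k$ near $Z(P_2)$, it shadows \eqref{estacionariaLine} and crosses at least three times by \eqref{condAB}.
\item A boundary parameter then yields an orbit from $P_0$, $P_3$ or $Q_5$. Each of these is peaking by Theorem \ref{th.peaking}.
\end{itemize}
This argument cannot decide the origin, which is why the theorem does not claim $f(0)>0$. By insisting on $f(0)>0$ you ask for more than the statement requires.
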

An important qualitative novelty in this range with respect to the precedents is that the value of $m$ also plays an important role in limiting these ranges of existence, a fact that is not seen in the case $m=1$. Moreover, for $m>1$ and $\sigma=0$, as a particular case we also make precise the result of \cite[Theorem 12.2]{GV97}. Indeed, we notice that its original statement contains a small calculation oversight, since in some cases, according to Theorem \ref{th.sigmasmall}, limitations on $m$ are needed in order to ensure existence of peaking solutions. In particular, for $\sigma=0$ and $m>m_2=1+1/\sqrt{N-1}$ non-existence of peaking solutions is expected. Let us also notice that 
$$
\sigma_2(N)-\sigma^*(N)=-\frac{2(N-10)(N^2-18N+8)}{3(N-2)(5N-2)},
$$
which is positive if $11\leq N\leq 17$ and negative for $N\geq18$. This calculation explains the separation between the last two items in the statement of Theorem \ref{th.sigmasmall}. In particular, we notice that for $N\geq 18$ and $m>m_2$ non-existence of peaking solutions is expected. A numerical evidence of the outcome of Theorem \ref{th.sigmasmall} is given in Figure \ref{fig2} in Section \ref{sec.num}.

\medskip

Let us now turn our attention to the remaining range $\sigma_2(N)\leq\sigma<\sigma_1(N)$. The outcome of our analysis is in this case strongly contrasting to the one of Theorem \ref{th.sigmasmall}, in the sense that peaking solutions exist for any $p$ sufficiently large. Let us introduce the following number:
\begin{equation}\label{s12}
	\sigma_{12}(N):=14N-6-8\sqrt{3N^2-2N}.
\end{equation}
We give below a qualitative statement, refraining from entering a deep technical discussion at this point.
\begin{theorem}\label{th.sigmalarge}
Let $m$, $p$ and $\sigma$ be as in \eqref{range.exp} and assume that $\sigma_2(N)<\sigma<\sigma_1(N)$. Then there exists at least one peaking type self-similar solution to Eq. \eqref{eq1} for any $p\geq\max\{p_{JL}(\sigma),p_L^+(\sigma)\}$, where $p_L^+(\sigma)$ is defined in \eqref{pL}. More precisely:

$\bullet$ for $N\in\{11,12\}$ or $N\geq13$ and $\sigma_{12}(N)\leq\sigma<\sigma_1(N)$ there exist peaking self-similar solutions for any $p\geq p_{JL}(\sigma)$ and any $m>1$.

$\bullet$ for $N\geq13$ and $\sigma_2(N)<\sigma<\sigma_{12}(N)$, there exists $M_2>1$ (having a complicated explicit form given in \eqref{a3}) such that, if $m\in(1,M_2]$, then there exist peaking solutions again for any $p\geq p_{JL}(\sigma)$. In the contrary range $m>M_2$, there exist peaking solutions for any $p>p_L^+(\sigma)$.

The profiles of these peaking type solutions also satisfy the behavior \eqref{beh.inf}. 
\end{theorem}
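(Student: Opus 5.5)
By Theorem \ref{th.peaking}, it is enough to produce one profile $f$ of \eqref{ODE.backward} with $f(0)>0$, $f'(0)=0$ and the decay \eqref{beh.inf} with $K\in(0,C(\sigma))$. I would follow the dynamical-systems route announced after Theorem \ref{th.pSpJL}.

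\textbf{Step 1 (autonomous system).} Pass to Emden--Fowler variables adapted to $U_s$, namely
$$
f(\xi)=\xi^{-(\sigma+2)/(p-m)}Y(\eta),\qquad \eta=\ln\xi.
$$
Together with the auxiliary variable $\xi^{2}f^{1-m}$, which couples the diffusion term to the $\alpha f+\beta\xi f'$ terms, this gives a three-dimensional autonomous system. Its critical points include:
\begin{itemize}
\item a point $P_0$ encoding profiles with $f(0)>0$ and $f'(0)=0$, with a one-parameter unstable manifold (one orbit for each value of $f(0)$);
\item the point $P_s$ corresponding to $U_s$, i.e.\ $Y\equiv C(\sigma)$;
\item a family of points at $\xi^2f^{1-m}=\infty$ with $Y=K$, representing the tails \eqref{beh.inf}.
\end{itemize}
The set of tails with $K<C(\sigma)$ forms an open attracting region $\mathcal{A}$.

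\textbf{Step 2 (linearization at $P_s$).} Linearize at $P_s$ inside the invariant plane $\xi^2f^{1-m}=0$, which is the stationary Emden--Fowler equation. The linearized operator is $Y''+aY'+bY=0$ with
$$
a=N-2-\frac{2m(\sigma+2)}{p-m},
$$
and $b$ explicit. The discriminant changes sign exactly at $p=p_{JL}(\sigma)$. For $p\geq p_{JL}(\sigma)$ the eigenvalues are real and negative, so $P_s$ is a stable node in that plane. In the transversal direction, the eigenvalue is positive and comes from the $\alpha,\beta$ terms.

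\textbf{Step 3 (separatrix and monotone profile).} The unique orbit leaving $P_0$ inside the invariant plane is the regular stationary solution $U_1$. Since $p\geq p_{JL}(\sigma)$ and the node is real, $U_1$ enters $P_s$ monotonically without crossing $Y=C(\sigma)$, so $Y<C(\sigma)$ along it.

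\textbf{Step 4 (continuity/shooting in $f(0)$).} Track where orbits from $P_0$ land as $f(0)$ varies.
\begin{itemize}
\item For $f(0)$ small, the orbit stays near the invariant plane. It follows $U_1$ towards $P_s$ and is then expelled along the transversal unstable direction.
\item The decisive question is on which side of $Y=C(\sigma)$ it leaves: if it leaves with $Y<C(\sigma)$, it is captured by $\mathcal{A}$.
\item The side is fixed by the relative position of the unstable direction of $P_s$ and the slow eigenvector (the one with the smaller $|\lambda|$) of the node in the plane.
\end{itemize}
Computing this sign reduces to a polynomial inequality in $(m,p,\sigma)$. Its roots in $p$ are exactly $p_L^{\pm}(\sigma)$ from \eqref{pL}, with discriminant $P(m)$ from \eqref{PdePL}.

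\textbf{Step 5 (sign of the leading coefficient).} Because $\sigma>\sigma_2(N)$, the leading coefficient $\sigma(N-2)-2(N-10)$ is positive. So the favorable region is $p>p_L^+(\sigma)$, rather than the interval $p<p_L^-(\sigma)$ of Theorem \ref{th.sigmasmall}. This gives existence for $p\geq\max\{p_{JL}(\sigma),p_L^+(\sigma)\}$; the endpoint case would need a separate center-manifold check. Openness of $\mathcal{A}$ together with continuity in $f(0)$ yields at least one orbit ending in $\mathcal{A}$, hence \eqref{beh.inf}.

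\textbf{Step 6 (the refined statements).} These reduce to comparing $p_L^+(\sigma)$ with $p_{JL}(\sigma)$, or to showing $P(m)<0$, in which case the sign condition holds for all $p$.
\begin{itemize}
\item One checks that $p_L^+(\sigma)\le p_{JL}(\sigma)$ for every $m>1$ exactly when $N\in\{11,12\}$, or $N\ge 13$ and $\sigma\ge\sigma_{12}(N)$.
\item Otherwise, the inequality holds precisely for $m\le M_2$, where $M_2$ is the root in $m$ of $p_L^+(\sigma)=p_{JL}(\sigma)$.
\end{itemize}

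\textbf{Main obstacle.} I expect the hardest part to be this algebra: the eigenvector sign computation of Step 4 and the comparison of $p_L^+$ with $p_{JL}$ in Step 6. The second is a quadratic-in-$m$ analysis whose discriminant in $\sigma$ produces $\sigma_{12}(N)$. I would first try to factor the resultant of $p_L^+(\sigma)=p_{JL}(\sigma)$ with respect to $m$.
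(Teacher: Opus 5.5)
\textbf{Main gap: the target set is not open.} Your Step 4 assumes that orbits with tail \eqref{beh.inf}, $K<C(\sigma)$, form an open attracting region $\mathcal A$. You then say an orbit leaving the vicinity of $U_s$ with $Y<C(\sigma)$ is ``captured''. This is false.

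By Remark \ref{rem.Ck}, for each $K>0$ there is exactly one orbit $\mathcal C_K$ with that tail. These orbits fill the two-dimensional center manifold of $Q_1$, and $Q_1$ has a one-dimensional unstable manifold. You can see this directly in your variables. Near $\xi^2f^{1-m}=\infty$, the term $\frac{\beta}{m}e^{s/\beta}g^{1/m-1}g'$ in \eqref{ODE.g} makes $g'$ grow super-exponentially unless it is exactly tuned. So each equilibrium $\{Y=K,\ Y_\eta=0\}$ at infinity is repelling in the $Y_\eta$-direction.

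Hence tail orbits form a codimension-one family in the three-dimensional phase space. An orbit that shadows $U_s$ without lying on that family leaves a neighbourhood of $Q_1$ along its unstable manifold and ends at the node $Q_3$ (Proposition \ref{prop.Q4.bis}). Its profile vanishes at some $\xi_0$ with $(f^m)'(\xi_0)<0$.

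So continuity in $f(0)$ plus openness of a single target proves nothing. You need two disjoint, open, nonempty parameter sets whose complement consists of tail orbits. The paper gets these by shooting \emph{backward} from $Q_1$ along $\{\mathcal C_k\}_{k\in(0,Z(P_2))}$ and counting crossings of $\{Z=Z(P_2)\}$:
\begin{itemize}
\item $\mathcal U$ (exactly two crossings, $\alpha$-limit $Q_2$) contains $(0,\vartheta)$, because of the chain $Q_2\to P_1\to Q_1$;
\item $\mathcal W$ (at least three crossings) contains $(Z(P_2)-\varepsilon,Z(P_2))$;
\item any $k_1$ in the complement gives an orbit into $Q_1$ starting from $P_0$, $P_3$ or $Q_5$.
\end{itemize}
This is also why the theorem does not assert $f(0)>0$. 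Note too that orbits from $P_0$ close to the plane $\{\xi^2f^{1-m}=0\}$ correspond to \emph{large} $f(0)$, not small.

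\textbf{Second gap: the origin of $p_L^\pm$ and $M_2$.} The unstable manifold of $P_s$ is $U_s$ itself: the line \eqref{estacionariaLine}, parallel to the $X$-axis and transversal to the invariant plane. So there is no ``relative position of the unstable direction and the slow eigenvector'' that decides an exit side, and no such first-order sign yields $p_L^\pm$.

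These exponents come instead from Lepin's zero count for the non-autonomous linearization \eqref{ylinear}. The solution with $y\to K>0$ has at least three zeros iff $A-\sqrt{A^2-4B}>4/\beta$. Equivalently, the slow in-plane eigenvalue of $P_2$ exceeds in modulus twice the unstable eigenvalue $1/\beta$, so this is a ratio condition. Once $p>p_1$, so that one may square, it becomes $R(p)<0$, whose roots are $p_L^\pm$. The negative leading coefficient of $R$ for $\sigma>\sigma_2(N)$ then gives $p>p_L^+$. Three zeros are exactly what places $k$ near $Z(P_2)$ into $\mathcal W$.

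Finally, $M_2$ is not a root of $p_L^+=p_{JL}(\sigma)$. Since $R(p_{JL}(\sigma))<0$, $p_{JL}(\sigma)$ is never a root of $R$. Rather, $M_2$ is the larger root in $m$ of the discriminant $\Delta(m,\sigma,N)$ in \eqref{a1}, where the two roots of $R$ are born. For $m>M_2$ both roots lie above $p_{JL}(\sigma)$, since then $p_{\max}>p_{JL}(\sigma)$. The resultant you propose to factor would not detect $M_2$.
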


Putting together Theorems \ref{th.pSpJL} and \ref{th.sigmalarge}, we have the following immediate consequence.
\begin{corollary}\label{cor.always}
Let $m$, $p$ and $\sigma$ be as in \eqref{range.exp}. Assume that one of the following properties holds true:

  $\bullet$ $3\leq N\leq 10$, $\sigma\in[\sigma_1(N),\infty)$ and $m>1$.
  
  $\bullet$ $N\in\{11,12\}$, $\sigma\in(\sigma_2(N),\infty)$ and $m>1$.

  $\bullet$ $N\geq13$, $\sigma\in[\sigma_{12}(N),\infty)$ and $m>1$ or $\sigma\in(\sigma_2(N),\sigma_{12}(N))$ and $m\in(1,M_2]$.

\noindent Then, for any $p>p_S(\sigma)$, Eq. \eqref{eq1} admits at least one peaking self-similar solution in the form \eqref{backward.SS}.
\end{corollary}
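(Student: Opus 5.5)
The corollary follows by splitting the range $p>p_S(\sigma)$ into $(p_S(\sigma),p_{JL}(\sigma))$ and $[p_{JL}(\sigma),\infty)$ and covering the first piece with Theorem \ref{th.pSpJL} and the second with Theorem \ref{th.sigmalarge}. The only real work is to check that the hypotheses of these theorems are met in each of the three listed cases.

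\emph{First item} ($3\leq N\leq10$, $\sigma\geq\sigma_1(N)$). By \eqref{pJL}, $p_{JL}(\sigma)=\infty$ exactly when $\sigma\geq\sigma_1(N)=(N-10)/4$. For $N\leq 10$ we have $\sigma_1(N)\leq0$, and since $\sigma>-2$ is also required, the admissible set is $[\sigma_1(N),\infty)\cap(-2,\infty)$. On this set the interval $(p_S(\sigma),p_{JL}(\sigma))$ is $(p_S(\sigma),\infty)$, so Theorem \ref{th.pSpJL} gives, for every $p>p_S(\sigma)$ and every $m>1$, infinitely many profiles satisfying \eqref{beh.inf}. Theorem \ref{th.peaking} then shows they are peaking.

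\emph{Second and third items} ($N\geq11$). Here $\sigma_2(N)<\sigma_1(N)$. If $\sigma\geq\sigma_1(N)$, we again have $p_{JL}(\sigma)=\infty$ and Theorem \ref{th.pSpJL} alone suffices. If $\sigma\in(\sigma_2(N),\sigma_1(N))$, then $p_{JL}(\sigma)<\infty$.
\begin{itemize}
\item For $p\in(p_S(\sigma),p_{JL}(\sigma))$ we use Theorem \ref{th.pSpJL}.
\item For $p\geq p_{JL}(\sigma)$ we use the first bullet of Theorem \ref{th.sigmalarge}. It applies for $N\in\{11,12\}$ with any $m>1$, and for $N\geq13$ with $\sigma\geq\sigma_{12}(N)$ and any $m>1$.
\item For $N\geq13$, $\sigma\in(\sigma_2(N),\sigma_{12}(N))$ and $m\in(1,M_2]$, we use the second bullet of Theorem \ref{th.sigmalarge}, which again gives all $p\geq p_{JL}(\sigma)$.
\end{itemize}
Together these cover all $p>p_S(\sigma)$, and \eqref{beh.inf} in each case yields peaking solutions via Theorem \ref{th.peaking}.

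The only delicate step is a consistency check. For $N\geq13$, where $\sigma_{12}(N)$ may exceed $\sigma_1(N)$, the set $[\sigma_{12}(N),\infty)$ must be read as the union of the part lying in $(\sigma_2(N),\sigma_1(N))$ and the part $\sigma\geq\sigma_1(N)$, which is handled by $p_{JL}=\infty$. One must also make sure that $\sigma_{12}(N)>\sigma_2(N)$, so that no gap appears. I would settle this by comparing the explicit formulas \eqref{critexpsigma} and \eqref{s12}. Any portion of $[\sigma_{12},\infty)$ below $\sigma_2$ would fall outside the statement, since the corollary is stated exactly within the ranges covered by the two theorems.
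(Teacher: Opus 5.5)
Your proof is correct and is exactly the paper's argument: the corollary follows by covering $(p_S(\sigma),p_{JL}(\sigma))$ with Theorem \ref{th.pSpJL} (which gives all of $(p_S(\sigma),\infty)$ when $\sigma\geq\sigma_1(N)$, since then $p_{JL}(\sigma)=\infty$) and $[p_{JL}(\sigma),\infty)$ with the two bullets of Theorem \ref{th.sigmalarge}. The ordering you hedge about is settled in Step 2 of the Appendix, where the signs of $D_1$ at $\sigma_2(N)$ and $\sigma_1(N)$ give $\sigma_2(N)<\sigma_{12}(N)<\sigma_1(N)$ for $N\geq13$, so $\sigma_{12}(N)$ never exceeds $\sigma_1(N)$ there; note also that your fallback remark is wrong, because a piece of $[\sigma_{12}(N),\infty)$ below $\sigma_2(N)$ would lie inside the corollary's hypotheses yet outside both theorems, so this check is genuinely needed (and it holds).
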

	
\noindent \textbf{Remark.} Besides the ranges given in Theorem \ref{th.sigmalarge}, we can also show that, if 
$$
\max\{\sigma_2(N),\sigma^*(N)\}<\sigma<\sigma_{12}(N),
$$
there exist peaking self-similar solutions for $p\in(p_{JL}(\sigma),p_L(\sigma))$, and the same is true for some more intervals indicated at the end of the Appendix. We note that an interesting and unexpected fact might appear: \emph{a gap} in the interval of $p$ guaranteeing the existence of peaking self-similar solutions occurs, either for $p\in(p_L(\sigma),p_L^+(\sigma))$, or for $p\in(p_{JL}(\sigma),p_L^+(\sigma))$, depending on some critical exponents. Such a gap has not been observed in the previous studies dedicated to either  the homogeneous case $\sigma=0$ or to the semilinear case $m=1$, such as \cite{GV97, IMS26}. Moreover, let us also notice that the exponent $p_L(\sigma)$ is no longer an upper bound for the existence of peaking self-similar solutions, as in the semilinear case or in Theorem \ref{th.sigmasmall}. On the contrary, its counterpart $p_L^+(\sigma)$ (which is deduced as the largest root of the same quadratic polynomial in $p$, as we shall see in the proof) is now a lower bound for the existence of peaking solutions. Numerical evidence of the previously explained gap are given in Figure \ref{fig3} in Section \ref{sec.num}, in contrast to Figure \ref{fig4}, where the range $\sigma\in(\sigma_{12}(N),\sigma_1(N))$ is considered.

Due to the very technical aspect of the proofs, we shall only prove in Sections \ref{sec.calc} and \ref{sec.proof} the general result for $p>\max\{p_{JL}(\sigma),p_L^+(\sigma)\}$ without distinguishing between the cases written in the two items of Theorem \ref{th.sigmalarge}. The precise distinction given in the two bullets of Theorem \ref{th.sigmalarge} will be given in an Appendix, in order to avoid, at a first reading, some tedious estimates and calculations. 

Let us point out here that it seems interesting, at least at a purely mathematical level, to consider $N$ as a real parameter in the differential equation \eqref{ODE.backward} instead of an integer representing a physical dimension. In this case, the critical dimension in Theorem \ref{th.sigmalarge} is actually not $N=13$, but $N=38/3$. That is, there are always peaking solutions for $p>p_{JL}(\sigma)$ and  $10<N<38/3$, and the detailed technical discussion postponed to the Appendix might go on identically from $N=38/3$ instead of $N=13$. 

\medskip

The proofs of the main results are based on a mix of techniques. On the one hand, an alternative formulation of \eqref{ODE.backward} as a three-dimensional autonomous quadratic dynamical system, obtained by a suitable transformation also successfully used by the authors in \cite{IS25a,IS25b}, is employed in order to establish trajectories between its critical points, that are mapped, after undoing the transformation, to profiles of peaking type self-similar solutions. On the other hand, a careful analysis adapting the original ideas from \cite{L89, L90} is needed in order to establish the rather complicated ranges described in the theorems. Finally, an analysis of the continuation after the blow-up time, justifying the name of peaking solutions, is performed by adapting to our case the ideas in \cite{GV97}. 

\medskip 

\noindent \textbf{Structure of the paper.} Since our main goal is the existence of peaking-type solutions, we start with the theory related to the continuation after the blow-up time for solutions satisfying \eqref{beh.inf}, showing how the condition $K<C(\sigma)$ is essential for this continuation. This more theoretical analysis is done in Section \ref{sec.peaking}, where Theorem \ref{th.peaking} is proved. We next present in Section \ref{sec.local} the detailed local analysis of the critical points of the system obtained as an alternative formulation of the differential equation \eqref{ODE.backward}. A number of preparatory results concerning the global analysis of the system are gathered in Section \ref{sec.prep}. This global analysis is followed by Section \ref{sec.pSpJL} containing the proof of Theorem \ref{th.pSpJL}. Possibly the most technical, but also the most important section of the paper is Section \ref{sec.calc}, where a detailed analysis of the comparison between different critical exponents involved in the existence of peaking solutions is given. This section contains actually the main estimates of the paper and explains how exponents such as $p_L(\sigma)$, $m_1$, $m_2$, $\sigma_*(N)$, $\sigma^*(N)$ appear. This section is followed by a general proof of the existence of peaking self-similar solutions contained in Section \ref{sec.proof}. We continue with Section \ref{sec.num} collecting some numerical evidence of our results, and the paper ends with an Appendix dedicated to the detailed proof of the two items of Theorem \ref{th.sigmalarge}.

\section{Peaking solutions and continuation after the blow-up time}\label{sec.peaking}

This section is devoted to the proof of Theorem \ref{th.peaking}, making precise how the continuation after the blow-up time occurs for the self-similar solutions in backward form \eqref{backward.SS} satisfying \eqref{beh.inf}. We shall notice that the condition $K<C(\sigma)$ in \eqref{beh.inf} is fundamental for the continuation. The analysis adapts the one performed in \cite{GV97}. In order to fix the notation, let us denote by $g(\cdot;A)$ the unique (local) solution to the Cauchy problem
\begin{equation}\label{CPF}
	\left\{\begin{array}{ll}(g^m)''(\zeta)+\frac{N-1}{\zeta}(g^m)'(\zeta)+\alpha g(\zeta)+\beta\zeta g'(\zeta)+\zeta^{\sigma}g^p(\zeta)=0,\\
	g(0)=A, \quad \lim\limits_{\xi\to0}\xi^{N-1}g'(\xi)=0.\end{array}\right.
\end{equation}
Indeed, the existence and uniqueness of solutions to \eqref{CPF} follows from \cite[Lemma 2.1]{ISS26} and the regularity analysis performed at the end of the Introduction of \cite{ISS26}. For $\sigma\geq-1$, the solutions to \eqref{CPF} are actually derivable at $\xi=0$ and satisfy $g'(0)=0$, while for $\sigma\in(-2,-1)$ the second initial condition in \eqref{CPF} is optimal, see \cite[Theorem 1.2]{IL25}. We then deduce by direct calculation that 
\begin{equation}\label{interm3}
U(x,t;A)=t^{-\alpha}g(\zeta;A), \quad \zeta=|x|t^{-\beta}
\end{equation}
is a radially symmetric solution to Eq. \eqref{eq1} in the self-similar form \eqref{SSforward}.

The proof of Theorem \ref{th.peaking} is a consequence of the following result.
\begin{proposition}\label{th.peak}
	Let $m$, $p$ and $\sigma$ be as in \eqref{range.exp} and let $K\in(0,C(\sigma))$, where $C(\sigma)$ is defined in \eqref{estacionaria}. Let 
	$$
u_0(x)=K|x|^{-(\sigma+2)/(p-m)}, \quad x\in\mathbb{R}^N\setminus\{0\}.
	$$
Then, there exists a solution $u$ to Eq. \eqref{eq1} such that 
\begin{equation}\label{propsol}
	u(t)\in L^{\infty}(\mathbb{R}^N), \quad t>0, \quad u(x,0)=u_0(x), \quad x\in\mathbb{R}^N.
\end{equation}
Moreover, the solution $u$ satisfying \eqref{propsol} is in the forward self-similar form \eqref{SSforward}.
\end{proposition}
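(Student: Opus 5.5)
Since $u_0$ is invariant under the scaling of Eq. \eqref{eq1}, $u_\lambda(x,t)=\lambda^{\alpha}u(\lambda^{\beta}x,\lambda t)$, we look for the solution among the forward self-similar functions $U(x,t;A)=t^{-\alpha}g(\zeta;A)$ defined in \eqref{interm3}. Indeed, the condition $u(x,0)=u_0(x)$ for $x\neq0$ translates into
$$
\lim_{t\to0}t^{-\alpha}g(|x|t^{-\beta};A)=K|x|^{-(\sigma+2)/(p-m)},
$$
and, since $\alpha/\beta=(\sigma+2)/(p-m)$, this is equivalent to the tail condition
$$
\lim_{\zeta\to\infty}\zeta^{(\sigma+2)/(p-m)}g(\zeta;A)=K.
$$
Boundedness of $u(t)$ for $t>0$ amounts to $g(\cdot;A)$ being a positive, globally defined profile with $g(0)=A<\infty$. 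Thus the proof reduces to a shooting problem for \eqref{CPF}: find $A>0$ such that $g(\cdot;A)$ is positive on $(0,\infty)$ and decays with exactly the constant $K$.

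The first step is the asymptotic analysis of positive solutions of \eqref{CPF} as $\zeta\to\infty$. The linear part $\alpha g+\beta\zeta g'$ vanishes on the power $\zeta^{-\alpha/\beta}$. For the reduced equation $\alpha g+\beta\zeta g'=0$, with diffusion and reaction subdominant (both of order $\zeta^{-\alpha/\beta-2}$ relative to it up to constants), every positive global solution should satisfy $\zeta^{\alpha/\beta}g(\zeta)\to L(A)\in[0,\infty)$. I would prove this by writing the equation as
$$
\frac{d}{d\zeta}\big(\zeta^{\alpha/\beta}g\big)=-\frac{\zeta^{\alpha/\beta-1}}{\beta}\Big[(g^m)''+\frac{N-1}{\zeta}(g^m)'+\zeta^{\sigma}g^p\Big]
$$
and integrating. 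In the range \eqref{range.exp} the right-hand side should be integrable once a priori bounds $g\lesssim\zeta^{-\alpha/\beta}$ are known. These bounds could alternatively be obtained from the phase-space reformulation used later in the paper, or as in \cite{GV97}.

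The second step is to study the map $A\mapsto L(A)$ on the set of $A$ giving positive global profiles.

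\emph{Small $A$.} Rescaling $g(\zeta;A)=A\,h(A^{(m-1)/2}\zeta)$ shows that the reaction term becomes negligible, so $g$ stays close to the forward solution of the porous medium equation with a compactly supported profile. I would instead compare with the positive linear part to get positivity, and expect $L(A)\to0$ as $A\to0$.

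\emph{Large $A$.} The scaling $g(\zeta;A)=A\,w(A^{(p-m)/(\sigma+2)}\zeta)$ sends \eqref{CPF} to a perturbation of the stationary equation $\Delta w^m+|y|^{\sigma}w^p=0$ with $w(0)=1$. Since $p>p_S(\sigma)$, its radial solution is positive and behaves like $U_s$, with $|y|^{(\sigma+2)/(p-m)}w\to C(\sigma)$. Passing the limit into the tail suggests $L(A)\to C(\sigma)$ as $A\to\infty$, possibly with oscillations when $p<p_{JL}(\sigma)$; either way values at least as large as $C(\sigma)$ are approached.

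\emph{Conclusion.} Continuity of $L$ in $A$, from continuous dependence plus the uniform tail estimates of the first step, combined with the intermediate value theorem, gives some $A$ with $L(A)=K$ for every $K\in(0,C(\sigma))$. This is where $K<C(\sigma)$ enters.

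The main obstacle is the large-$A$ limit: controlling the profile uniformly on the intermediate region where it transitions from the stationary-like regime to the $\zeta^{-\alpha/\beta}$ tail, and proving positivity there. I would try the comparison arguments of \cite{GV97}: show that $U_s$ is a supersolution-type barrier, so $g(\cdot;A)<$ the self-similar rewriting of $U_s$, which would keep $L(A)\le C(\sigma)$. Positivity would follow by comparing with small subsolutions. Finally, $u(x,t)=U(x,t;A)$ solves Eq. \eqref{eq1} with $u(t)\in L^\infty$, is self-similar of the form \eqref{SSforward}, and has initial trace $u_0$, as required.
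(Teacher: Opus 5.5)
\textbf{There is a genuine gap.} Your route is shooting in $A$ on the forward profiles $g(\cdot;A)$ and applying the intermediate value theorem to $L(A)=\lim_{\zeta\to\infty}\zeta^{(\sigma+2)/(p-m)}g(\zeta;A)$. It differs from the paper's and could work in principle, but the step everything rests on is not proved: that the range of $A\mapsto L(A)$ contains $(0,C(\sigma))$, i.e.\ that $\limsup_{A\to\infty}L(A)\ge C(\sigma)$.

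\emph{Passing the limit into the tail is not legitimate.} With $g(\zeta;A)=A\,w_A(A^{(p-m)/(\sigma+2)}\zeta)$ you have $L(A)=\lim_{y\to\infty}y^{(\sigma+2)/(p-m)}w_A(y)$. However, $w_A\to w_\infty$ (the regular stationary profile with $w_\infty(0)=1$) only locally uniformly. The drift term $A^{-(\sigma(m-1)+2(p-1))/(\sigma+2)}(\alpha w+\beta y w')$ annihilates $C(\sigma)y^{-(\sigma+2)/(p-m)}$ and acts on the deviation from it. It becomes comparable to the stationary operator exactly when $y\sim A^{(p-m)/(\sigma+2)}$, i.e.\ $\zeta\sim1$, which is where the tail constant is actually fixed. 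You need a matching argument through this transition region. For example, in a phase space for the forward equation, you would show that for large $A$ the regular orbit shadows the connection $P_0\to P_2$ in $\{X=0\}$, then follows the stationary line \eqref{estacionariaLine}, and reaches $Q_1$ with tail constant close to $C(\sigma)$.

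\emph{Your proposed barrier points the wrong way.} A bound $g(\cdot;A)<C(\sigma)\zeta^{-(\sigma+2)/(p-m)}$ only gives $L(A)\le C(\sigma)$. For $p_S(\sigma)<p<p_{JL}(\sigma)$ it is even false: the regular stationary profiles oscillate around $U_s$, so by local uniform convergence $g(\cdot;A)$ crosses it for large $A$. The place where $K<C(\sigma)$ is supposed to enter your argument is therefore exactly the unproved claim.

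\emph{Other ingredients are only asserted.}
\begin{itemize}
\item Positivity and global existence of $g(\cdot;A)$ for every $A$, or at least connectedness of the admissible set of $A$. This requires excluding compactly supported forward profiles (the porous-medium analogue of Haraux--Weissler solutions) and profiles vanishing with negative flux $(g^m)'<0$.
\item Continuity of $L$. This needs tail bounds of the form $g(\zeta;A)\le c\,\zeta^{-(\sigma+2)/(p-m)}$ uniformly in $A$, which you assume rather than derive.
\item Minor points: the small-$A$ rescaling should be $g=A\,h(A^{-(m-1)/2}\zeta)$. The limiting porous-medium profile is not compactly supported here, because $\alpha<N\beta$; it has a $\zeta^{-\alpha/\beta}$ tail, which is what makes $L(A)\to0$ plausible.
\end{itemize}

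\emph{How the paper avoids all this.} It never studies the map $A\mapsto L(A)$ and works with the PDE instead. The minimal solution with data $u_0$ is a monotone limit of problems with truncated data and truncated weight, bounded above by $U_s$; this is where $K<C(\sigma)$ is used. It is self-similar by scaling invariance plus minimality. Its boundedness at the origin follows by comparison, inside the cone $\{|x|\le\xi(A)t^{\beta}\}$, with a single forward solution $U(\cdot,\cdot;A)$ touching $U_s$, or touching $V_\kappa$ when no such $A$ exists.
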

\begin{proof} We split the proof into several steps.
	
\medskip 

\noindent \textbf{Step 1. Construction of a minimal solution.} Let us consider, for $n\geq1$ and $T>0$, the following approximating sequence of Cauchy problems: 
\begin{equation}\label{CPn}
\left\{\begin{array}{ll}
u_t=\Delta u^m+f_n(x,u), & (x,t)\in\mathbb{R}^N\times(0,T),\\
u(x,0)=u_{0,n}(x), & x\in\mathbb{R}^N,
\end{array}\right.
\end{equation}
where 
$$
f_n(x,u)=\min\{|x|^{\sigma},n^{\sigma}\}u^p, \quad u_{0,n}(x)=\min\{u_0(x),n\}.
$$
Since the weight multiplying the source term of \eqref{CPn} belongs to $L^{\infty}(\real^N)$ and the initial condition of the Cauchy problem \eqref{CPn} is bounded and continuous, we deduce from \cite[Proposition 2.8]{Su02} and the discussion after it that there exists a unique solution $u_n$ to \eqref{CPn} defined on a maximal interval $(0,T_n)$. Moreover, by definition, $u_{0,n}(x)\leq u_{0,n+1}(x)$ and $f_{n}(x,u)\leq f_{n+1}(x,u)$ for any $x\in\mathbb{R}^N$ and $u\in\mathbb{R}$. The comparison principle applied to the Cauchy problem \eqref{CPn} (see for example \cite[Proposition 2.2]{Su02}) then implies that 
\begin{equation}\label{interm1}
u_{n}(x,t)\leq u_{n+1}(x,t), \quad (x,t)\in\mathbb{R}^N\times(0,T_{n+1}).
\end{equation}
Since $K<C(\sigma)$, we deduce that $u_{0,n}(x)\leq u_0(x)\leq U_s(x)$ for any $x\in\mathbb{R}^N$, where $U_s(x)$ is the stationary solution \eqref{estacionaria}. Once more, the comparison principle gives 
\begin{equation}\label{interm2}
u_n(x,t)\leq U_s(x), \quad (x,t)\in\mathbb{R}^N\times(0,T_n), \quad n\geq1.
\end{equation}
We infer from gathering the information of \eqref{interm1} and \eqref{interm2}, together with the finiteness of $U_s$ in $\real^N\setminus\{0\}$, that there exists 
\begin{equation}\label{limsol}
u(x,t):=\lim\limits_{n\to\infty}u_n(x,t)\leq U_s(x), \quad (x,t)\in\mathbb{R}^N\setminus\{0\}\times(0,\infty).
\end{equation}
It readily follows by the monotone convergence theorem applied to the weak formulation of the Cauchy problem \eqref{CPn} that $u$ is a weak solution to Eq. \eqref{eq1} which is bounded outside the origin. Moreover, if $v$ is another weak solution to the Cauchy problem \eqref{eq1}-\eqref{propsol}, it is a supersolution to the approximating problems \eqref{CPn} for any $n\geq1$. Since the Cauchy problem \eqref{CPn} admits a comparison principle by \cite[Proposition 2.2]{Su02}, we deduce that $u_n\leq v$ in $\mathbb{R}^N\times(0,\infty)$ and thus $u\leq v$ in $\mathbb{R}^N\times(0,\infty)$. This simple argument ensures that the limit solution $u$ obtained in \eqref{limsol} is \emph{the minimal solution} to the Cauchy problem \eqref{eq1}-\eqref{propsol}.

\medskip

\noindent \textbf{Step 2. The minimal solution is self-similar}. We show next that the limit solution $u$ defined in \eqref{limsol} is in the forward self-similar form \eqref{SSforward}. To this end, we proceed as in the proof of \cite[Lemma 10.2]{GV97} by introducing the rescaling 
\begin{equation}\label{resc}
	\mathcal{T}_{\lambda}u(r,t)=\lambda^{\alpha}u(\lambda^{\beta}r,\lambda t), \quad r=|x|, \quad \lambda\in(0,\infty).
\end{equation} 
One can find by direct calculation, taking into account the explicit values of $\alpha$ and $\beta$ in \eqref{SSexp}, that both Eq. \eqref{eq1} and the initial condition $u_0(x)$ are invariant with respect to the rescaling \eqref{resc}, that is, $\mathcal{T}_{\lambda}u$ is a solution to the Cauchy problem \eqref{eq1}-\eqref{propsol}. The minimality of $u$ ensures that $\mathcal{T}_{\lambda}u\equiv u$ for any $\lambda\in(0,\infty)$; that is, 
$$
u(r,t)=\lambda^{\alpha}u(\lambda^{\beta}r,\lambda t), \quad (\lambda,r,t)\in(0,\infty)\times[0,\infty)\times(0,\infty).
$$
Letting $\lambda=1/t$, we have
$$
u(r,t)=t^{-\alpha}u(rt^{-\beta},1),
$$
which is equivalent to the self-similar form \eqref{SSforward}. 

\medskip 

\noindent \textbf{Step 3. The minimal solution is bounded.} We are only left to show that the minimal solution $u$ obtained in \eqref{limsol} is bounded also at the origin, in order to complete the proof of the continuation after blow-up. We split the remaining part of the proof into two cases.

\medskip

\noindent \textbf{Case 1.} Assume that there is $A>0$ sufficiently large such that the unique solution $g(\cdot;A)$ of the Cauchy problem \eqref{CPF} intersects the stationary solution $U_s$ defined in \eqref{estacionaria}; that is, there exists $\xi(A)\in(0,\infty)$ such that 
\begin{equation}\label{interm4}
g(\xi;A)<U_s(\xi), \ {\rm for} \ \xi\in(0,\xi(A)), \quad g(\xi(A);A)=U_s(\xi(A)).
\end{equation}
We consider then the function $U(\cdot,\cdot;A)$ defined in \eqref{interm3} and we perform a comparison on the parabolic cylinder 
$$
D_{\tau}:=\{(x,t): t>\tau, |x|\leq\xi(A)t^{\beta}\},
$$
for some $\tau>0$ to be chosen appropriately. Fixing $n\geq1$ and recalling that $u_n$ is the solution to the Cauchy problem \eqref{CPn}, we observe, on the one hand, by the construction of $u_n$, that 
\begin{equation}\label{comp1}
u_n(x,t)\leq U_s(x)=U(x,t;A), \quad t>0, \quad |x|=\xi(A)t^{\beta}.
\end{equation}
Indeed, the latter inequality follows from the self-similar structure \eqref{SSforward} shared by both $U(\cdot,\cdot;A)$ and $U_s(\cdot)$ and the choice of $\xi(A)$ in \eqref{interm4}. On the other hand, $u_n$ is bounded on compact sets (in space and time), hence there is $\tau(n)>0$ such that 
\begin{equation}\label{comp2}
u_n(x,\tau(n))\leq U(x,\tau(n);A), \quad |x|\leq\xi(A)\tau(n)^{\beta}.
\end{equation}
We deduce from \eqref{comp1}, \eqref{comp2} and the parabolic comparison principle applied as in the proof of \cite[Lemma 10.2]{GV97} that
$$
u_n(x,t)\leq U(x,t;A), \quad (x,t)\in D_{\tau(n)}.
$$
Moreover, we can choose $\tau(n)$ such that $\tau(n)\to0$ as $n\to\infty$, since with this choice the domain of $|x|$ for which \eqref{comp2} holds true shrinks to zero, and near $x=0$ the inequality is clear at $t=\tau(n)$. We can hence pass to the limit as $n\to\infty$ to derive from \eqref{limsol} that 
$$
u(x,t)\leq U(x,t;A), \quad (x,t)\in D_0:=\{(x,t): t>0, |x|\leq\xi(A)t^{\beta}\}.
$$
In particular, $u(0,t)\leq U(0,t;A)=t^{-\alpha}A$, which ensures the desired boundedness. 

\medskip 

\noindent \textbf{Case 2.} Assume that $g(\xi;A)<U_s(\xi)$ for any $\xi>0$ and $A\in(0,\infty)$. Let then $\kappa\in(0,C(\sigma))$. We prove next that the function
$$
V_{\kappa}(x)=\kappa|x|^{-(\sigma+2)/(p-m)}
$$
is a strict supersolution to Eq. \eqref{eq1}. Indeed, letting $r=|x|$, we have
$$
(V_{\kappa}^m)''(r)=\kappa^m\frac{m(\sigma+2)}{p-m}\left[\frac{m(\sigma+2)}{p-m}+1\right]
$$
and thus 
\begin{equation*}
	\begin{split}
(V_\kappa^m)''(r)+\frac{N-1}{r}(V_\kappa^m)'(r)&+r^{\sigma}V_\kappa^p(r)=\kappa^m\left\{\frac{m(\sigma+2)}{p-m}\left[\frac{m(\sigma+2)}{p-m}+2-N\right]+\kappa^{p-m}\right\}\\
&\times r^{-(m\sigma+2p)/(\sigma+2)}\\&=\left[\kappa^{p-m}-C(\sigma)^{p-m}\right]r^{-(m\sigma+2p)/(\sigma+2)}<0,
	\end{split}
\end{equation*}
which shows that $V_\kappa$ is a strict supersolution. Letting $A$ large and $\kappa<C(\sigma)$ sufficiently small, we deduce that $g(\cdot;A)$ intersects $V_\kappa$ and we continue as in Case 1 by replacing $U_s$ with $V_\kappa$ in the arguments.
\end{proof}
The proof of Theorem \ref{th.peaking} is now immediate.
\begin{proof}[Proof of Theorem \ref{th.peaking}]
Let 
$$
U(x,t)=(T-t)^{-\alpha}f(|x|(T-t)^{-\beta})
$$
be a self-similar solution with a profile $f$ satisfying \eqref{beh.inf}. Fix $x\in\mathbb{R}^N\setminus\{0\}$ and $r=|x|$. We then infer from \eqref{beh.inf} that 
\begin{equation*}
\begin{split}
\lim\limits_{t\to T}U(x,t)&=\lim\limits_{t\to T}(T-t)^{-\alpha}f(|x|(T-t)^{-\beta})\\
&=\lim\limits_{t\to T}(T-t)^{-\alpha}\left(|x|(T-t)^{-\beta}\right)^{(\sigma+2)/(p-m)}f(|x|(T-t)^{-\beta})\\&\times\left(|x|(T-t)^{-\beta}\right)^{-(\sigma+2)/(p-m)}=K|x|^{-(\sigma+2)/(p-m)}.
\end{split}
\end{equation*}
Since $K\in(0,C(\sigma))$, the previous limit shows that $U$ can be continued for $t>T$ by a regular, bounded function in view of Proposition \ref{th.peak} and the proof is complete.
\end{proof}

\medskip 

\noindent \textbf{Remark.} The self-similar solutions in forward form \eqref{SSforward} to Eq. \eqref{eq1} have been considered by the authors in the recent paper \cite{ISS26}. The outcome of Theorem \ref{th.peaking} adds up to the analysis in \cite{ISS26} by proving that, for any $K\in(0,C(\sigma))$, there exists a forward self-similar solution to Eq. \eqref{eq1} with the tail behavior \eqref{beh.inf} as $\zeta\to\infty$. 

\section{An autonomous dynamical system. Phase space analysis}\label{sec.local}

In this section we introduce an alternative formulation of the differential equation \eqref{ODE.backward} in the form of an autonomous dynamical system. The careful analysis of the trajectories of this dynamical system will provide important information on the behavior of the solutions to \eqref{ODE.backward} and is thus an essential step in the proofs of the main results.

Throughout this section, and unless explicitly stated otherwise, we assume that $p>p_{S}(\sigma)$, recalling that $p_S(\sigma)$ is defined in \eqref{pS}. We introduce of the following change of variables, successfully employed, for example, in \cite{IS25b,ISS26}:
\begin{equation}\label{PSchange}
X(\eta)=\frac{\alpha}{m}\xi^2f^{1-m}(\xi), \qquad Y(\eta)=\frac{\xi f'(\xi)}{f(\xi)}, \qquad Z(\eta)=\frac{1}{m}\xi^{\sigma+2}f^{p-m}(\xi),
\end{equation}
where $\alpha$ is defined in \eqref{SSexp} and $\eta:=\ln\,\xi$ is the new independent variable. Under this transformation, the interval $\xi\in(0,\infty)$ is mapped onto the whole real line $\eta\in\mathbb{R}$. Eq. \eqref{ODE.backward} is transformed by straightforward algebraic manipulations to the following autonomous quadratic dynamical system:
\begin{equation}\label{PSsyst}
\left\{\begin{array}{ll}\dot{X}=X(2-(m-1)Y), \\[2mm] \dot{Y}=X-(N-2)Y-Z-mY^2+\frac{p-m}{\sigma+2}XY, \\[2mm] \dot{Z}=Z(\sigma+2+(p-m)Y).\end{array}\right.
\end{equation}
We next study the trajectories of the phase-space system \eqref{PSsyst} and extract from them information on the corresponding self-similar profiles and their asymptotic behavior. Since the dynamical system \eqref{PSsyst} coincides with the one derived in \cite[Section 2]{IS25b}, several technical arguments are identical or differ only by minor adaptations. Thus, some proofs will be skipped, referring the interested reader to the aforementioned work for further details.

Since we are interested in nonnegative self-similar profiles, our analysis will be restricted to the region
\begin{equation*}
	\mathcal{R}_+:=(0,\infty)\times\mathbb{R}\times(0,\infty).
\end{equation*}
Indeed, it follows directly from \eqref{PSsyst} that the boundary planes $\{X=0\}$ and $\{Z=0\}$ are invariant under the flow. Consequently, trajectories starting in $\mathcal{R}_+$ remain in this region for all the values of $\eta$ for which they are defined. Observe that $Y$ is the only phase-space variable that may change sign. According to its definition in \eqref{PSchange}, such a sign change corresponds to a critical point of the profile $f(\xi)$. 

The main strategy throughout the paper consists in analyzing the trajectories of \eqref{PSsyst} and identifying those whose behavior at finite and infinite critical points yields, after undoing the transformation \eqref{PSchange}, self-similar profiles with the desired local and asymptotic properties.

\subsection{Finite critical points}

A straightforward analysis of system \eqref{PSsyst} shows that it admits four finite critical points,
\begin{equation*}
	\begin{aligned}
P_0=(0,0,0), \  P_1=&\left(0,-\frac{N-2}{m},0\right), \  P_2=\left(0,-\frac{\sigma+2}{p-m},\frac{(\sigma+2)((N-2)p-m(N+\sigma))}{(p-m)^2}\right),\\[2mm]
& P_3 =\left(\frac{2(\sigma+2)(Nm-N+2)}{[\sigma(m-1)+2(p-1)](m-1)},\frac{2}{m-1},0\right).
\end{aligned}
\end{equation*}
For later use, we denote
\begin{equation}\label{ZP2-XP3}
	\begin{split}
		Z(P_2)& :=\frac{(\sigma+2)((N-2)p-m(N+\sigma))}{(p-m)^2},\\[2mm]
		X(P_3)& :=\frac{2(\sigma+2)(Nm-N+2)}{[\sigma(m-1)+2(p-1)](m-1)}.
	\end{split}
\end{equation}
We next describe the local behavior of these critical points under the assumptions in \eqref{range.exp}. Observe first that $Z(P_2)>0$, since $p>p_S(\sigma)$ and
$$
p_S(\sigma)-\frac{m(N+\sigma)}{N-2}=\frac{m(\sigma+2)}{N-2}>0.
$$
The local behavior near three of these equilibria is summarized in the following technical result. Since the system \eqref{PSsyst} coincides with the one analyzed in \cite[Section~2]{IS25b}, the proof follows exactly the same arguments as those of \cite[Lemmas 2.1, 2.2 and 2.4]{IS25b} and will therefore be omitted.
\begin{lemma}\label{lem.P0P1P3}
Consider the dynamical system \eqref{PSsyst} and its critical points $P_0$, $P_1$, and $P_3$. Their local phase-space structure and the asymptotic behavior of the associated profiles as $\xi\to0$ are described as follows.
\begin{enumerate}
\item The linearization of \eqref{PSsyst} around $P_0$ possesses a two-dimensional unstable manifold and a one-dimensional stable manifold contained in the $Y$-axis. The unstable manifold consists of a one-parameter family of trajectories
\begin{equation}\label{lC}
(l_C): \ \left\{\begin{array}{ll}Y(\eta)=\frac{X(\eta)}{N}-\frac{Z(\eta)}{N+\sigma}+o\bigl(|X(\eta),Z(\eta)|\bigr), \\[3mm]
Z(\eta)\sim CX(\eta)^{(\sigma+2)/2},\end{array}\right. \quad {\rm as} \ \eta\to-\infty,
\end{equation}     
where $C\in[0,\infty]$, with the convention that $C=0$ corresponds to the trajectory contained in the invariant plane $\{Z=0\}$ and $C=\infty$ to the trajectory contained in the invariant plane $\{X=0\}$. Every trajectory $l_C$ with $C\in(0,\infty)$ enters the region $\mathcal{R}_+$ and, after reverting the change of variables \eqref{PSchange}, corresponds to profiles satisfying
$f(0)=A>0$ and, when $\sigma>-1$, also $f'(0)=0$.

\item The critical point $P_1$ is a saddle point with a two-dimensional unstable manifold contained in the invariant plane $\{Z=0\}$ and a one-dimensional stable manifold contained in the invariant plane $\{X=0\}$. Moreover, no trajectory emerging from $P_1$ enters the region $\mathcal{R}_+$.

\item The critical point $P_3$ is a saddle point or a saddle-focus with a two-dimensional stable manifold entirely contained in the invariant plane $\{Z=0\}$ and a one-dimensional unstable manifold consisting of a unique trajectory. The associated profiles to this unstable trajectory exhibit the asymptotic behavior
\begin{equation}\label{beh.P3f}
\lim\limits_{\xi\to0}\xi^{-2/(m-1)}f(\xi)= \left[\frac{m-1}{2m(NM-N+2)}\right]^{1/(m-1)}.
\end{equation}
\end{enumerate}
\end{lemma}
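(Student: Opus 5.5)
The plan is to linearize \eqref{PSsyst} at each of the three points, read off eigenvalues and eigenvectors, and then translate the resulting local trajectories back through \eqref{PSchange}. Throughout, the invariance of the planes $\{X=0\}$ and $\{Z=0\}$ will be used to confine the relevant manifolds.

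\textbf{The point $P_0$.} The Jacobian at $P_0$ has rows $(2,0,0)$, $(1,-(N-2),-1)$ and $(0,0,\sigma+2)$. Its eigenvalues are therefore $2$, $-(N-2)<0$ and $\sigma+2>0$.
\begin{itemize}
\item The stable eigenvector is $e_2$. The $Y$-axis is invariant, since $X=Z=0$ gives $\dot Y=-(N-2)Y-mY^2$, so it is the stable manifold.
\item The unstable eigenvectors are $(N,1,0)$ and $(0,-1,N+\sigma)$. These give the tangency $Y=X/N-Z/(N+\sigma)+o(|X,Z|)$.
\end{itemize}
On the two-dimensional unstable manifold, $X\sim c_1e^{2\eta}$ and $Z\sim c_2e^{(\sigma+2)\eta}$. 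Hence $Z\sim CX^{(\sigma+2)/2}$ with $C\in[0,\infty]$, and the extreme values $C=0$ and $C=\infty$ give the trajectories inside the invariant planes.

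To interpret these trajectories I eliminate $\xi$. From $X=(\alpha/m)\xi^2f^{1-m}$ and $Z=\xi^{\sigma+2}f^{p-m}/m$ one gets
$$Z/X^{(\sigma+2)/2}=\mathrm{const}\cdot f^{\,p-m+(m-1)(\sigma+2)/2}.$$
So $C\in(0,\infty)$ corresponds to $f(0)=A>0$, with $A$ an increasing function of $C$. Next, $Y=\xi f'/f\sim X/N-Z/(N+\sigma)$ gives $f'(\xi)\sim c\,\xi-c'\xi^{\sigma+1}$, and this yields $f'(0)=0$ exactly when $\sigma>-1$. The sign of $C$ makes $X,Z>0$, so these trajectories enter $\mathcal{R}_+$.

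\textbf{The point $P_1$.} Here $Y_1=-(N-2)/m$. The eigenvalues are:
\begin{itemize}
\item $2-(m-1)Y_1=2+(m-1)(N-2)/m>0$ in the $X$-direction;
\item $-(N-2)-2mY_1=N-2>0$ in the $Y$-direction;
\item $\sigma+2+(p-m)Y_1=\big(m(\sigma+2)-(p-m)(N-2)\big)/m$ in the $Z$-direction.
\end{itemize}
The last one is negative, because $p>p_S(\sigma)$ implies $(p-m)(N-2)>m(\sigma+2)$. Thus $P_1$ is a saddle with two positive eigenvalues and one negative eigenvalue.

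The unstable eigenspace is spanned by vectors with no $Z$-component, and the plane $\{Z=0\}$ is invariant, so the unstable manifold lies in $\{Z=0\}$. Likewise the stable eigenvector lies in $\{X=0\}$, which is invariant and contains the stable manifold. Since the unstable manifold is exactly two-dimensional and contained in $\{Z=0\}$, no trajectory leaving $P_1$ has $Z>0$, so none enters $\mathcal{R}_+$.

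\textbf{The point $P_3$.} The $Z$-eigenvalue is $\sigma+2+2(p-m)/(m-1)>0$, and its eigenvector has a nonzero $Z$-component, giving the unique unstable trajectory. The remaining $2\times2$ block in $(X,Y)$ lies in $\{Z=0\}$. I check that it has negative trace and positive determinant:
\begin{itemize}
\item the determinant is $(m-1)X(P_3)>0$, since the $(X,X)$ entry vanishes and the $(X,Y)$ entry is $-(m-1)X(P_3)$;
\item the trace is $-(N-2)-4m/(m-1)+\frac{p-m}{\sigma+2}X(P_3)$.
\end{itemize}
The sign of the trace requires the algebraic computation cited from \cite{IS25b}. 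With it, the block is stable (a node or a focus), so $P_3$ is a saddle or a saddle-focus with stable manifold in $\{Z=0\}$.

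On the unstable trajectory $X\to X(P_3)$ as $\eta\to-\infty$. Solving $\frac{\alpha}{m}\xi^2f^{1-m}=X(P_3)$ gives
$$f\sim\big(\alpha/(mX(P_3))\big)^{1/(m-1)}\xi^{2/(m-1)},$$
and substituting the value of $\alpha$ produces \eqref{beh.P3f}. Consistently, $Y\to 2/(m-1)$ there.

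The main obstacle is the sign of the trace of the $(X,Y)$ block at $P_3$, which needs the explicit formula for $X(P_3)$ combined with the inequality $p>p_S(\sigma)$.
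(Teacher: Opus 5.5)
Your plan is the standard linearization argument, which is also what the paper relies on: it omits the proof and refers to \cite[Lemmas 2.1, 2.2 and 2.4]{IS25b}. Your treatment of $P_0$ and $P_1$ is correct, and so is the derivation of \eqref{beh.P3f} via $\alpha/X(P_3)=\frac{m-1}{2(Nm-N+2)}$. Two things need repair at $P_3$.

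\textbf{The determinant.} The $(Y,X)$ entry of the Jacobian at $P_3$ is
\[
1+\frac{p-m}{\sigma+2}\cdot\frac{2}{m-1}=\frac{\sigma(m-1)+2(p-1)}{(\sigma+2)(m-1)},
\]
not $1$. So the determinant of the $(X,Y)$ block is $\frac{2(Nm-N+2)}{m-1}$, not $(m-1)X(P_3)$. It is still positive, so nothing breaks.

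\textbf{The trace.} The step you deferred to the citation, and called the main obstacle, is a one-line computation, and it does not use $p>p_S(\sigma)$. Substitute $2(p-m)=[\sigma(m-1)+2(p-1)]-(\sigma+2)(m-1)$ into $\frac{p-m}{\sigma+2}X(P_3)$. The terms recombine to
\[
-(N-2)-\frac{4m}{m-1}+\frac{p-m}{\sigma+2}X(P_3)=-\frac{2m}{m-1}-\frac{(\sigma+2)(Nm-N+2)}{\sigma(m-1)+2(p-1)}<0 .
\]
This holds for every $p>m$ and $\sigma>-2$, since $\sigma(m-1)+2(p-1)=2(p-m)+(\sigma+2)(m-1)>0$. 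Hence the block is always a stable node or focus. In the whole lemma, the only place where a lower bound on $p$ beyond $p>m$ is needed is the sign of the $Z$-eigenvalue at $P_1$.

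\textbf{A smaller point at $P_0$.} You assert $X\sim c_1e^{2\eta}$ and $Z\sim c_2e^{(\sigma+2)\eta}$ on the unstable manifold without justification. This needs a reason, because eigenvalue resonances such as $\sigma=2$ (where $\sigma+2=2\cdot 2$) can produce logarithmic corrections in general systems. Here it follows by integrating $\dot X/X=2-(m-1)Y$ and $\dot Z/Z=\sigma+2+(p-m)Y$. Indeed $Y=O(|X|+|Z|)$ decays exponentially as $\eta\to-\infty$, so it is integrable there.
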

A closer inspection of the proof of \cite[Lemma 2.1]{IS25b} leads to the more precise local behavior of the profiles corresponding to trajectories going out of $P_0$: 
\begin{equation}\label{beh.P0f}
	f(\xi)\sim\left\{\begin{array}{ll}\left[D+\frac{\alpha(m-1)}{2mN}\xi^2\right]^{1/(m-1)}, & \sigma>0,\\[4mm]
		\left[\left(\frac{D}{p-1}\right)^{(m-1)/p-1}+\frac{(m-1)((1-D))}{2mN(p-1)}\xi^2\right]^{1/(m-1)}, & \sigma=0,\\[5mm]
		\left[D+\frac{p-m}{m(N+\sigma)(\sigma+2)}\xi^{\sigma+2}\right]^{-1/(p-m)}, & \sigma\in(-2,0),
	\end{array}\right. \quad {\rm as} \ \xi\to0,
\end{equation}
where $D>0$ is an arbitrary constant. 

When analyzing the critical point $P_2$, the Joseph--Lundgren exponent $p_{JL}(\sigma)$ emerges as the threshold governing the local phase-space structure. The proof follows, with the necessary adaptations, the arguments developed in \cite[Lemma 2.3]{IS25a} and \cite[Lemma 2.3]{IS25b}. Therefore, we only highlight the main differences and state the resulting properties.
\begin{lemma}\label{lem.P2}
The critical point $P_2$ is a saddle-focus for $p\in(p_S(\sigma),p_{JL}(\sigma))$ and a hyperbolic saddle for $p\geq p_{JL}(\sigma)$. In both cases, it possesses a two-dimensional stable manifold entirely contained in the invariant plane $\{X=0\}$ and a one-dimensional unstable manifold. The unique orbit lying in the unstable manifold corresponds to the stationary solution \eqref{estacionaria}.
\end{lemma}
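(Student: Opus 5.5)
The plan is to linearize \eqref{PSsyst} at $P_2=(0,Y_2,Z_2)$, with $Y_2=-(\sigma+2)/(p-m)$ and $Z_2=Z(P_2)$, and to exploit that the plane $\{X=0\}$ is invariant. This invariance makes the Jacobian block triangular: the $X$-row is $(2-(m-1)Y_2,0,0)$. Hence one eigenvalue is
$$
\lambda_1=2+\frac{(m-1)(\sigma+2)}{p-m}>0,
$$
and the remaining two eigenvalues are those of the $2\times2$ Jacobian of the restricted system in the $(Y,Z)$ plane,
$$
\dot Y=-(N-2)Y-Z-mY^2,\qquad \dot Z=Z(\sigma+2+(p-m)Y).
$$
At $(Y_2,Z_2)$ this restricted matrix is
$$
\begin{pmatrix} -(N-2)-2mY_2 & -1\\ (p-m)Z_2 & 0\end{pmatrix}.
$$
Its trace is $-(N-2)+2m(\sigma+2)/(p-m)$, which is negative exactly when $p>m(N+2\sigma+2)/(N-2)=p_S(\sigma)$. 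Its determinant is $(p-m)Z_2>0$. Both eigenvalues therefore have negative real part, so the stable manifold is two-dimensional. Since $\{X=0\}$ is invariant and tangent to the stable eigenspace, the stable manifold lies inside $\{X=0\}$, by uniqueness of the local stable manifold together with the invariance of the plane restricted to its own stable manifold.

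Next I would compute the discriminant $\Delta=\mathrm{tr}^2-4\det$ as a function of $p$. Complex eigenvalues (saddle-focus) occur iff $\Delta<0$, real ones (hyperbolic saddle with real spectrum) iff $\Delta\geq0$. Writing $\Delta$ in terms of $p$ and clearing $(p-m)^2$ gives a quadratic polynomial in $p$. I would show that its roots are exactly $p_{JL}(\sigma)$ and a second root below $p_S(\sigma)$ when $\sigma<(N-10)/4$, and that it is negative for all $p>p_S(\sigma)$ when $\sigma\geq(N-10)/4$ (the leading coefficient being proportional to $(N-2)(N-4\sigma-10)$). This matches the denominator in \eqref{pJL}. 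This algebraic identification, namely checking that the root formula coincides with \eqref{pJL} and locating the other root relative to $p_S(\sigma)$, is the main computational obstacle. I would first verify it at $\sigma=0$ against the known $p_{JL}(0)$ and at $m=1$ against \cite{FT00}, and then do the general simplification by substituting $q=(\sigma+2)/(p-m)$. In that variable $\Delta$ becomes a quadratic in $q$ whose roots involve $\sqrt{(2N+\sigma-2)(\sigma+2)}$.

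Finally, the unstable manifold is one-dimensional, tangent to the eigenvector of $\lambda_1$, and exactly one branch enters $X>0$. To identify it, I would note that $U_s$ is a stationary solution, so the backward profile $f(\xi)=C(\sigma)\xi^{-(\sigma+2)/(p-m)}$ solves \eqref{ODE.backward}: indeed $\alpha f+\beta\xi f'=0$ because $\alpha=\beta(\sigma+2)/(p-m)$ by \eqref{SSexp}, and the remaining terms vanish by the choice of $C(\sigma)$. Under \eqref{PSchange} this profile gives $Y\equiv Y_2$ and $Z\equiv\xi^{\sigma+2}C(\sigma)^{p-m}/m=Z_2$, the latter by the definitions of $C(\sigma)$ and $Z(P_2)$, while $X=\frac{\alpha}{m}C^{1-m}\xi^{2+(m-1)(\sigma+2)/(p-m)}$ tends to $0$ as $\eta\to-\infty$. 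This is therefore a trajectory in $\mathcal{R}_+$ leaving $P_2$, namely the straight line $\{Y=Y_2,Z=Z_2,X>0\}$, which is invariant because $\dot Y=\dot Z=0$ there. By uniqueness of the unstable manifold it is the unique orbit in it.
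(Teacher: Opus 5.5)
Your proposal is correct and follows essentially the same route as the paper: you linearize at $P_2$ using the block-triangular structure (giving $\lambda_1=[\sigma(m-1)+2(p-1)]/(p-m)>0$), analyze the trace, determinant and discriminant of the $(Y,Z)$-block, whose roots are $p_{JL}(\sigma)$ and a value below $p_S(\sigma)$, and identify the invariant line \eqref{estacionariaLine} as the unique unstable orbit. Your substitution $q=(\sigma+2)/(p-m)$ turns the discriminant into $4m^2q^2-4m(N-\sigma-4)q+(N-2)(N-4\sigma-10)$, with $p>p_S(\sigma)$ corresponding to $0<mq<(N-2)/2$; this is a clean way to carry out the computation the paper only asserts, and it also correctly covers $\sigma\geq\sigma_1(N)$. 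There is one harmless slip: on the stationary profile $Z=\xi^{\sigma+2}f^{p-m}/m=C(\sigma)^{p-m}/m$, since the factor $\xi^{\sigma+2}$ cancels.
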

\begin{proof}
The linearization of \eqref{PSsyst} around the critical point $P_2$ has the matrix
	\begin{equation*}
		M(P_2)=\begin{pmatrix}
			\frac{\sigma(m-1)+2(p-1)}{p-m} & 0 & 0\\[2mm]
			0 & -\frac{(N-2)(p-p_S(\sigma))}{p-m} & -1\\[2mm]
			0 & \frac{(\sigma+2)[p(N-2)-m(N+\sigma)]}{p-m} & 0
		\end{pmatrix}
	\end{equation*} 
where $p_S(\sigma)$ is defined in \eqref{pS}. Its eigenvalues satisfy
	\begin{equation*}
		\lambda_1=\dfrac{L}{p-m}>0,\ \ \lambda_2+\lambda_3=-\dfrac{(N-2)(p-p_S(\sigma))}{p-m},\ \ \lambda_2\lambda_3= \frac{(\sigma+2)[p(N-2)-m(N+\sigma)]}{p-m}.
	\end{equation*}
Since $p>p_S(\sigma)>m(N+\sigma)/(N-2)$, it follows that $\lambda_2+\lambda_3<0$ and $\lambda_2\lambda_3>0$. Therefore, either $\lambda_2$ and $\lambda_3$ are both negative real eigenvalues or they form a pair of complex conjugates with negative real part. The distinction between these two cases is determined by the sign of the discriminant
	\begin{equation*}\small
		\begin{split}
			F(p,m,\sigma,N) & = \left(\lambda_2+\lambda_3\right)^2-4\lambda_2\lambda_3\\[2mm]
			&  =\dfrac{(N-2)(N-4\sigma-10)p^2-2m\bigl(N^2-2N(\sigma+4)-2\sigma(\sigma+2)+4\bigr)p+m^2(N-2)^2}{(p-m)^2}.
		\end{split}
	\end{equation*}
	A straightforward computation shows that
	\begin{equation*}
		F\bigl(p_S(\sigma),m,\sigma,N\bigr)=-2(N-2)(\sigma+2)>0,\qquad F\bigl(p_{JL}(\sigma),m,\sigma,N\bigr)=0.
	\end{equation*}
	Moreover, the numerator of $F(p,m,\sigma,N)$ is a quadratic polynomial in $p$ whose leading coefficient is positive for $N\geq 11$, and whose roots are $p_{JL}(\sigma)$ and another root strictly smaller than $p_S(\sigma)$. Hence
	\begin{equation*}
		F(p,m,\sigma,N)>0
	\qquad\text{for }p>p_{JL}(\sigma),
	\end{equation*}
	while
	\begin{equation*}
		F(p,m,\sigma,N)<0
	\qquad\text{for }p_S(\sigma)<p<p_{JL}(\sigma).
	\end{equation*}
	We conclude that $P_2$ is a saddle-focus when $p_S(\sigma)<p<p_{JL}(\sigma)$ and a hyperbolic saddle when $p\geq p_{JL}(\sigma)$. 
	
	Since $M(P_2)$ has a block structure, the two-dimensional stable manifold is necessarily contained in the invariant plane $\{X=0\}$, while the unstable manifold is one-dimensional and, by \cite[Theorem 3.2.1]{GH}, it consists of a unique orbit. Let us remark that the straight line 
\begin{equation}\label{estacionariaLine}
	Y=-\dfrac{\sigma+2}{p-m},\qquad Z=Z(P_2)=\dfrac{(\sigma+2)[p(N-2)-m(N+\sigma)]}{(p-m)^2}.
\end{equation}
is a trajectory of the system \eqref{PSsyst} which corrresponds to the stationary solution \eqref{estacionaria} after an immediate undoing of the change of variable \eqref{PSchange}. The uniqueness of the orbit contained in the unstable manifold of $P_2$ then ensures that the only orbit contained in it is exactly \eqref{estacionariaLine}, concluding the proof.
\end{proof}
\begin{remark}\label{rem.estacionaria}
Since $\sigma\in(-2,\infty)$, the quantity under the square root in the definition \eqref{pJL} of the Joseph--Lundgren exponent is always positive. The only singular value occurs at $\sigma=\sigma_1(N)$, where the denominator vanishes and therefore $p_{JL}(\sigma)$ is not defined. This threshold plays an important role in our analysis. Indeed, as will become apparent in the proof of the main results given in Sections \ref{sec.pSpJL} and \ref{sec.proof}, the strategy of proof depends strongly on whether the Joseph--Lundgren exponent is present or not. For $\sigma<\sigma_1(N)$, the exponent $p_{JL}(\sigma)$ provides a natural threshold leading to a specific phase-space structure and requiring a separate analysis. On the other hand, for $\sigma\geq\sigma_1(N)$ we have $p_{JL}(\sigma)=\infty$ according to \eqref{pJL}.
Hence, in the parameter range considered throughout this paper, namely $p>p_S(\sigma)$, the critical value $\sigma=\sigma_1(N)$ separates the regimes in which the Joseph--Lundgren exponent is present in the analysis from those in which it is not.
\end{remark}

\subsection{Critical points at infinity} 

In order to analyze the critical points at infinity of system \eqref{PSsyst}, we employ the Poincar\'e compactification and study the associated flow on the Poincar\'e hypersphere. Following the general theory developed in \cite[Section 3.10]{Pe}, we introduce the projective coordinates
$$
X=\frac{\overline{X}}{W}, \qquad Y=\frac{\overline{Y}}{W}, \qquad Z=\frac{\overline{Z}}{W}.
$$
To the best of our knowledge, the first use of this compactification technique in the study of self-similar solutions to porous-medium-type equations dates back to the work of Hulshof \cite{Hu91}, where it was applied in a phase-plane setting. According to \cite[Theorem 4, Section 3.10]{Pe}, the critical points at infinity are obtained as the solutions of the reduced algebraic system
\begin{equation*}
\left\{\begin{array}{ll}\frac{1}{\sigma+2}\overline{X}\overline{Y}[(p-m)\overline{X}-(\sigma+2)\overline{Y}]=0,\\[2mm]
(p-1)\overline{X}\overline{Z}\overline{Y}=0,\\[2mm]
\frac{1}{\sigma+2}\overline{Y}\overline{Z}[p(\sigma+2)\overline{Y}-(p-m)\overline{X}]=0,\end{array}\right.
\end{equation*}
together with the requirement that they belong to the equator of the Poincar\'e hypersphere. This amounts to imposing $W=0$ and the normalization condition
$$
\overline{X}^2+\overline{Y}^2+\overline{Z}^2=1.
$$ 
Since $\overline X\geq0$ and $\overline Z\geq0$, a straightforward computation yields the following critical points at infinity:
\begin{equation*}
\begin{split}
Q_1=(1,0,0&,0), \ \ Q_{2,3}=(0,\pm1,0,0), \ \ Q_4=(0,0,1,0), \ \ Q_{\gamma}=\left(\gamma,0,\sqrt{1-\gamma^2},0\right),\\
&Q_5=\left(\frac{\sigma+2}{\sqrt{(\sigma+2)^2+(p-m)^2}},\frac{p-m}{\sqrt{(\sigma+2)^2+(p-m)^2}},0,0\right),
\end{split}
\end{equation*}
with $\gamma\in(0,1)$. We next examine the critical points at infinity identified above. Since the analysis is largely analogous to that performed in \cite{IMS26,IS25b}, we restrict ourselves to outlining the main arguments and refer the interested reader to those works for the technical details.

For the critical points $Q_1$, $Q_5$, and the family $Q_\gamma$, it is convenient to employ the local chart associated with the projection onto the $X$-direction. Following \cite[Theorem 5(a), Section 3.10]{Pe}, we introduce the variables
\begin{equation}\label{change2}
	x=\frac{1}{X}, \qquad y=\frac{Y}{X}, \qquad z=\frac{Z}{X},
\end{equation}
leading to the system
\begin{equation}\label{PSinf1}
\left\{\begin{array}{ll}\frac{dx}{d\eta_1}=x[(m-1)y-2x],\\[2mm]
\frac{dy}{d\eta_1}=-y^2+\frac{p-m}{\sigma+2}y+x-Nxy-xz,\\[2mm]
\frac{dz}{d\eta_1}=z[(p-1)y+\sigma x],\end{array}\right.
\end{equation}
where the derivatives in \eqref{PSinf1} are taken with respect to the independent variable $\eta_1$ defined implicitly as
\begin{equation}\label{changexi}
\frac{d\eta_1}{d\xi}=\frac{\alpha}{m}\xi f^{1-m}(\xi)=\dfrac{X(\xi)}{\xi}.
\end{equation}
In the coordinates introduced in \eqref{change2}, the critical points $Q_1$, $Q_5$, and the family $Q_\gamma$ become
$$
Q_1=(0,0,0), \ \ Q_5=\left(0,-\frac{p-m}{\sigma+2},0\right), \ \ Q_{\gamma}=(0,0,\kappa), \ \ {\rm where} \ \kappa=\frac{\sqrt{1-\gamma^2}}{\gamma}\in(0,\infty).
$$
The characterization of the critical points $Q_1$, $Q_\gamma$, and $Q_5$ follows the same lines as the proofs of \cite[Lemmas 3.1 and 3.2]{IS25b}. Since no substantial modifications are required, we omit the details and state the corresponding result below.
\begin{lemma}\label{lem.Q1QgQ5}
\begin{enumerate}
  \item The critical point $Q_1$ possesses a two-dimensional center manifold with stable direction of the flow and a one-dimensional unstable manifold contained in the $y$-axis. The profiles corresponding to the trajectories lying in the center manifold satisfy
  \begin{equation}\label{beh.Q1}
  	\lim_{\xi\to\infty}\xi^{(\sigma+2)/(p-m)}f(\xi)=K\in(0,\infty).
  \end{equation}
      
  \item The critical point $Q_5$ is a saddle point with a two-dimensional unstable manifold and a one-dimensional stable manifold, the latter being contained in the invariant $y$-axis of system \eqref{PSinf1}. Trajectories going out of $Q_5$ give rise to profiles displaying a dead-core behavior; that is,
  \begin{equation*}\label{beh.Q5.deadcore}
  	f(0)=0, \ \xi\in[0,\xi_0],\qquad f'(\xi)>0,\ \xi\in(\xi_0,\xi_0+\delta),\qquad (f^m)'(\xi_0)=0,
  \end{equation*}
 for some $\xi_0\in(0,\infty)$ and $\delta>0$.
   
  \item Let
  \begin{equation*}
  	\gamma=\gamma_0:=\dfrac{(p-1)(\sigma+2)}{\sqrt{[\sigma(m-1)+2(p-1)]^2+(p-1)^2(\sigma+2)^2}},
  \end{equation*}
  Then the critical point $Q_{\gamma_0}$ admits a unique trajectory entering it from the finite region of the phase space associated with the system \eqref{PSinf1}. The corresponding profiles satisfy
  \begin{equation}\label{beh.Qg}
  	f(\xi)\sim\left(\dfrac{1}{p-1}\right)^{1/(p-1)}\xi^{-\sigma/(p-1)},\ \ \text{as}\ \ \xi\to\infty,
  \end{equation}
  For every $\gamma\in(0,1)$ with $\gamma\neq\gamma_0$, all the local invariant manifolds associated with $Q_\gamma$ are contained in the plane $\{x=0\}$ and therefore do not generate trajectories arising from the finite part of the phase space. 
\end{enumerate}
\end{lemma}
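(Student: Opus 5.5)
We work in the chart \eqref{change2}, where the system \eqref{PSinf1} has the three relevant points $Q_1=(0,0,0)$, $Q_5=(0,-(p-m)/(\sigma+2),0)$ and $Q_\gamma=(0,0,\kappa)$. For each point we compute the Jacobian of \eqref{PSinf1}, read off the invariant manifolds, and then translate the resulting local behavior back to $f$ through \eqref{change2}, \eqref{PSchange} and the time change \eqref{changexi}. Recall that $X\to\infty$ corresponds to $\xi^2f^{1-m}\to\infty$, which, since $\eta_1$ increases with $\xi$, means $\xi\to\infty$ or approach to a point where $f$ vanishes.

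\textbf{The point $Q_1$.} The Jacobian of \eqref{PSinf1} at $Q_1$ is
$$\begin{pmatrix}0&0&0\\1&\frac{p-m}{\sigma+2}&0\\0&0&0\end{pmatrix}.$$
It has a positive eigenvalue $(p-m)/(\sigma+2)$ along the $y$-axis, which is invariant. It also has a double zero eigenvalue, so there is a two-dimensional center manifold. Approximating it to first order gives $y\approx-\frac{\sigma+2}{p-m}x$. The reduced flow on it is then
$$\dot x\approx -x^2\left[\tfrac{(m-1)(\sigma+2)}{p-m}+2\right]<0,\qquad \dot z\approx z\,x\left[\sigma-\tfrac{(p-1)(\sigma+2)}{p-m}\right].$$
The bracket in $\dot z$ simplifies to $-\frac{\sigma(m-1)+2(p-1)}{p-m}<0$, so the flow on the center manifold is stable. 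Along these trajectories $Y/X=y\sim -\frac{\sigma+2}{p-m}x=-\frac{\sigma+2}{p-m}\frac1X$. Hence $Y\to-(\sigma+2)/(p-m)$, that is, $\xi f'/f\to-(\sigma+2)/(p-m)$. To get the exact limit \eqref{beh.Q1} rather than only the logarithmic rate, we compute the next-order term of the center manifold and integrate. Equivalently, we show that $Z/X$ tends to a constant: $z$ is monotone and bounded along the approach to $Q_1$, and $z=Z/X=c\,\xi^{\sigma}f^{p-1}$. Combining this with $X\to\infty$ yields $\xi^{(\sigma+2)/(p-m)}f\to K>0$. Finally, $\xi\to\infty$ because the integral $\int X/\xi\,d\xi$ must diverge.

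\textbf{The point $Q_5$.} At $Q_5$ the $x$-eigenvalue is $(m-1)y=-(m-1)(p-m)/(\sigma+2)<0$. The $y$-eigenvalue is $-2y+\frac{p-m}{\sigma+2}=\frac{p-m}{\sigma+2}>0$, and the $z$-eigenvalue is $(p-1)y<0$. This count does not match a saddle with two unstable directions, so we recheck the orientation. In this chart the time direction may be reversed relative to $\eta$ depending on the sign of $X$, and we must also identify which directions lie in $\{x=0\}$. We will follow \cite[Lemma 3.2]{IS25b} for this. The dead-core interpretation comes from $y=Y/X\to y_0<0$ with $X\to\infty$ at a finite $\xi_0$. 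This gives $(f^{m-1})'\sim c\,\xi$ near $\xi_0$, so $f^{m-1}$ behaves linearly, which is the standard interface behavior, and $f$ extends by zero.

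\textbf{The family $Q_\gamma$.} At $Q_\gamma$ the Jacobian has $x$-eigenvalue $0$ and $z$-eigenvalue $0$. The $y$-eigenvalue is $\frac{p-m}{\sigma+2}$, and the $x$-row couples to $y$ with coefficient $1-\kappa$. A center manifold leaving $\{x=0\}$ can only exist when the $x$-dependence cancels in $\dot z$ to leading order, that is, when $(p-1)y+\sigma x=0$ on it. Combining this with $y\approx(\kappa-1)x\frac{\sigma+2}{p-m}$ forces a unique $\kappa$, which corresponds to $\gamma_0$. The resulting profile $\xi^\sigma f^{p-1}\to 1/(p-1)$ gives \eqref{beh.Qg}.

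The main obstacle is the degenerate (center manifold) analysis at $Q_1$ and at $Q_{\gamma_0}$. There the first-order approximation has to be pushed one order further to obtain genuine convergence to a finite positive $K$ and the exact constant in \eqref{beh.Qg}. For this we would adapt the computations of \cite[Lemmas 3.1, 3.2]{IS25b} verbatim, since the system is the same.
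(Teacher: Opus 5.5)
Your route is the one the paper delegates to \cite[Lemmas 3.1 and 3.2]{IS25b}: linearize \eqref{PSinf1} in the chart \eqref{change2}, use center manifolds at the degenerate points, and undo \eqref{change2} and \eqref{PSchange}. As written, however, it fails at $Q_5$, the decisive step for $Q_1$ does not work, and uniqueness at $Q_{\gamma_0}$ is left open.

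\textbf{$Q_5$.} You computed at the wrong point.
\begin{itemize}
\item The value $y=-(p-m)/(\sigma+2)$ printed for $Q_5$ in the chart is a misprint. On the sphere $\overline{Y}/\overline{X}=(p-m)/(\sigma+2)>0$, and $(0,-(p-m)/(\sigma+2),0)$ is not even an equilibrium of \eqref{PSinf1}: there $dy/d\eta_1=-2(p-m)^2/(\sigma+2)^2$.
\item Your proposed rescue, a time reversal depending on the sign of $X$, is not available, since $X>0$ in $\mathcal{R}_+$ and $d\eta_1=X\,d\eta$.
\item At $Q_5=(0,(p-m)/(\sigma+2),0)$ the Jacobian is lower triangular with diagonal $(m-1)\frac{p-m}{\sigma+2}$, $-\frac{p-m}{\sigma+2}$, $(p-1)\frac{p-m}{\sigma+2}$. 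The negative eigenvalue has eigenvector $(0,1,0)$, which is exactly the claimed saddle.
\item The same sign error breaks your dead-core argument. With $y\to y_0<0$, $f^{m-1}$ would have to decrease from $0$ to the right of $\xi_0$. Since $y=\frac{m}{\alpha(m-1)}\frac{(f^{m-1})'}{\xi}$, the correct limit gives $(f^{m-1})'(\xi_0^+)=\frac{(m-1)\beta}{m}\xi_0>0$, hence $f(\xi_0)=(f^m)'(\xi_0)=0$.
\item Finiteness of $\xi_0$ needs an argument. Near $Q_5$, $x\sim e^{(m-1)(p-m)\eta_1/(\sigma+2)}$, so $\ln\xi=\int x\,d\eta_1$ converges as $\eta_1\to-\infty$.
\end{itemize}

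\textbf{$Q_1$.} Your ``equivalent'' argument for \eqref{beh.Q1} proves nothing. At $Q_1$ one has $z\to0$ and $X\to\infty$, which says nothing about $Z=zX$.
\begin{itemize}
\item What you missed is that your two brackets coincide: $\frac{(m-1)(\sigma+2)}{p-m}+2=\frac{\sigma(m-1)+2(p-1)}{p-m}=:c$. Hence $Z=z/x$ is constant to leading order.
\item To close the argument, note that the center-manifold graph $y=h(x,z)$ vanishes on $\{x=0\}$, because the $z$-axis is the line of equilibria $Q_\gamma$. Then $Y=h/x=-\frac{\sigma+2}{p-m}+O(x+z)$, so $\frac{d}{d\eta}\ln Z=\sigma+2+(p-m)Y=O(x+z)$.
\item In $\eta=\ln\xi$, with $d\eta=x\,d\eta_1$, both $x$ and $z$ decay like $e^{-c\eta}$. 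Hence $\ln Z$ converges and $Z\to K\in(0,\infty)$.
\item $\xi\to\infty$ because $x\sim1/(c\eta_1)$ makes $\int x\,d\eta_1$ diverge. The integral you cite, $\int X\,d\xi/\xi$, is $\eta_1$ itself.
\end{itemize}

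\textbf{$Q_\gamma$.} Your condition gives the right $\kappa_0=\frac{\sigma(m-1)+2(p-1)}{(p-1)(\sigma+2)}$, but the mechanism is misstated.
\begin{itemize}
\item Every $Q_\gamma$ has a two-dimensional center manifold that is not contained in $\{x=0\}$.
\item On it, $\dot x=-x^2a(z)+O(x^3)$ and $\dot z=xz\,b(z)+O(x^2)$, with $a=2+\frac{(m-1)(\sigma+2)(1-z)}{p-m}$ and $b=\sigma-\frac{(p-1)(\sigma+2)(1-z)}{p-m}$.
\item Dividing by $x$ (returning to the time $\eta$) gives a regular point when $b(\kappa)\neq0$. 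So no orbit with $x>0$ converges to $Q_\gamma$ for $\gamma\neq\gamma_0$.
\item At $\kappa_0$ it gives a hyperbolic saddle, with $a(\kappa_0)=\frac{\sigma(m-1)+2(p-1)}{p-1}>0$ and $\kappa_0 b'(\kappa_0)>0$. Its stable branch, transverse to $\{x=0\}$, is the \emph{unique} entering orbit, reached as $\eta=\ln\xi\to\infty$.
\end{itemize}
You never argue this uniqueness.
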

We observe that, depending on the sign of $\sigma\in(-2,\infty)$, the behavior \eqref{beh.Qg} corresponds either to a tail (if $\sigma>0$), to a positive horizontal asymptote (if $\sigma=0$) or to an unbounded profile (if $\sigma<0$). Moreover, when $\sigma=0$, the orbit entering $Q_{\gamma_0}$ is entirely contained in the invariant plane $\{Y=0\}$ and corresponds to the constant profile
\begin{equation*}
	f(\xi)=\left(\frac{1}{p-1}\right)^{1/(p-1)}.
\end{equation*}

\begin{remark}\label{rem.Ck}
	Similarly as in the proof of \cite[Lemma 2.3]{IMS26}, one can show that, for every $K\in(0,\infty)$, there exists a unique trajectory entering the critical point $Q_1$ through its center manifold and satisfying \eqref{beh.Q1} with that precise value of $K$.
	We denote this trajectory by $C_K$. In this way, the family $\{C_K\}_{K>0}$ provides a natural parametrization of all trajectories entering $Q_1$ along its center manifold. In particular, the stationary solution \eqref{estacionaria}, represented in the phase space by the trajectory \eqref{estacionariaLine}, coincides with $C_{Z(P_2)}$, where $Z(P_2)$ is defined in \eqref{ZP2-XP3}. Hence, the profiles satisfying \eqref{beh.inf} correspond to trajectories $C_K$ with $K<Z(P_2)$, while trajectories with $K>Z(P_2)$ yield profiles whose decay at infinity is faster than that of the stationary solution. In view of Proposition \ref{th.peak} and Theorem \ref{th.peaking}, the peaking solutions correspond to trajectories $C_K$ entering $Q_1$ with $K<Z(P_2)$.
\end{remark}

Let us also recall here the approximation of the center manifold of $Q_1$ obtained in the proof of \cite[Lemma 3.1]{IS25b}
	\begin{equation}\label{centerQ1}
		y=-\frac{\sigma+2}{p-m}x
		-\frac{(\sigma+2)^2[p(N-2)-m(N+\sigma)]}{(p-m)^3}x^2
		+\frac{\sigma+2}{p-m}xz
		+x\,o\bigl(|(x,z)|^2\bigr).
	\end{equation}
Taking into account that, on the trajectory $C_K$ with $K\in(0,\infty)$ we have $z(\eta_1)\sim Kx(\eta_1)$ as $\eta_1\to\infty$, we find that \eqref{centerQ1} reads
$$
y=-\frac{\sigma+2}{p-m}x+\frac{\sigma+2}{p-m}x^2(K-Z(P_2))+x\,o\bigl(|(x,z)|^2\bigr),
$$
which shows from which side of the plane $\{Y=-(\sigma+2)/(p-m)\}$ the trajectory $C_K$ reaches $Q_1$. The previous expansion,	together with the asymptotic behavior \eqref{beh.Q1}, also implies, after reverting the changes of variables \eqref{change2} and \eqref{PSchange}, that the derivatives of the profiles entering $Q_1$ satisfy
	\begin{equation*}
		\lim_{\xi\to\infty}\xi^{(\sigma+2)/(p-m)+1}f'(\xi)\in(0,\infty).
	\end{equation*}

\medskip

To study the local dynamics near the critical points $Q_2$ and $Q_3$, we consider the chart associated with the projection onto the $Y$-direction. Following \cite[Theorem 5(b), Section 3.10]{Pe}, we arrive in a similar way as in the previous analysis at the system
\begin{equation}\label{PSinf2}
\left\{\begin{array}{lll}\pm\frac{dx}{d\eta_2}=-x-Nxw+\frac{p-m}{\sigma+2}x^2+x^2w-xzw,\\[2mm]
\pm\frac{dz}{d\eta_2}=-pz-(N+\sigma)zw+\frac{p-m}{\sigma+2}xz+xzw-z^2w,\\[2mm]
\pm\frac{dw}{d\eta_2}=-mw-(N-2)w^2+\frac{p-m}{\sigma+2}xw+xw^2-zw^2,\end{array}\right.
\end{equation}
where the new variables are defined by
\begin{equation*}
	x=\frac{X}{Y}, \qquad z=\frac{Z}{Y}, \qquad w=\frac{1}{Y},
\end{equation*}
and the independent variable $\eta_2$ is implicitly determined by 
\begin{equation*} 
	\dfrac{d\eta_2}{d\xi}=\dfrac{Y(\xi)}{\xi}.
\end{equation*}
In these coordinates, the flow of \eqref{PSsyst} in a neighborhood of $Q_2$ and $Q_3$ is locally topologically equivalent to the flow of \eqref{PSinf2} near the origin, with the minus sign corresponding to $Q_2$ and the plus sign to $Q_3$. The analysis of the local dynamics is straightforward and follows the same lines as the proof of \cite[Lemma 3.3]{IS25b}. We therefore omit the details and state the resulting properties below.
\begin{lemma}\label{lem.Q23}
	The critical points $Q_2$ and $Q_3$ are, respectively, an unstable node and a stable node. The trajectories emerging from $Q_2$ correspond to profiles $f$ for which there exist $\xi_0\in(0,\infty)$ and $\delta>0$ such that
	\begin{equation}
		f(\xi_0)=0,\qquad
		(f^m)'(\xi_0)=C>0,\qquad
		f(\xi)>0 \quad \text{for}\quad \xi\in(\xi_0,\xi_0+\delta).
	\end{equation}
	Similarly, the trajectories entering $Q_3$ correspond to profiles $f$ for which there exist $\xi_0\in(0,\infty)$ and $\delta\in(0,\xi_0)$ such that
	\begin{equation}
		f(\xi_0)=0,\qquad
		(f^m)'(\xi_0)=-C<0,\qquad
		f(\xi)>0 \quad \text{for}\quad \xi\in(\xi_0-\delta,\xi_0).
	\end{equation}
\end{lemma}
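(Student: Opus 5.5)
The plan is to linearize system \eqref{PSinf2} at the origin $(x,z,w)=(0,0,0)$ and then translate the resulting local dynamics back to the original variables. The sign in front of the derivatives distinguishes $Q_2$ (minus sign) from $Q_3$ (plus sign).

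\textbf{Local dynamics at the origin.} Keeping only linear terms, the right-hand sides reduce to $-x$, $-pz$ and $-mw$. The Jacobian at the origin is therefore $\mathrm{diag}(-1,-p,-m)$. All three eigenvalues are real, negative and nonzero, because $p>m>1$.
\begin{itemize}
\item With the plus sign ($Q_3$), the origin is a hyperbolic stable node in the variable $\eta_2$.
\item With the minus sign ($Q_2$), time is reversed and the origin is an unstable node.
\end{itemize}
One still has to reconcile the direction of $\eta_2$ with that of $\eta$, since $d\eta_2/d\xi=Y/\xi$. Near $Q_3$ we have $Y>0$, so $\eta_2$ increases with $\xi$ and the node is stable in $\xi$. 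Near $Q_2$ we have $Y<0$, so the orientation flips. After combining these orientations with the $\pm$ sign convention in \eqref{PSinf2} (as set up in \cite[Theorem 5(b), Section 3.10]{Pe}), $Q_2$ is a source and $Q_3$ is a sink in terms of $\xi$. By the Hartman--Grobman theorem this node character is robust, and a whole open set of trajectories of $\mathcal{R}_+$ enters or leaves each point.

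\textbf{Back to the profile.} Along a trajectory approaching the origin, $w=1/Y\to0$ with $w\sim c\,e^{\mp m\eta_2}$ and $x=X/Y\sim c'\,e^{\mp\eta_2}$. Hence $x\sim C w^{1/m}$, which gives
\[
X\sim C\,|Y|^{1-1/m}, \qquad \text{so}\qquad \xi^2f^{1-m}\sim C\Bigl|\frac{\xi f'}{f}\Bigr|^{(m-1)/m}.
\]
The key step is to show that $\eta_2$ reaches $\pm\infty$ at a finite value $\xi_0\in(0,\infty)$. We integrate $d\xi/\xi=d\eta_2/Y=w\,d\eta_2$, whose integrand decays exponentially. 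Therefore $\ln\xi$ converges to a finite limit, giving $\xi_0>0$.

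At $\xi_0$ we have $|Y|\to\infty$, so $f'/f\to\pm\infty$. Since $X$ stays comparable to $|Y|^{(m-1)/m}\to\infty$, we get $f^{1-m}\to\infty$, that is $f(\xi_0)=0$. The relation above can be rewritten as $(f^m)'=mf^{m-1}f'\sim \mathrm{const}$ with $\xi\approx\xi_0$. More precisely, $f^{m-1}|f'|/f\cdot f=|f'|f^{m-1}$ tends to the positive limit $(\alpha C'/m)^{-m/(m-1)}$ (up to constants), which yields $(f^m)'(\xi_0)=\pm C\neq0$.

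\textbf{Signs and constraints.} The sign is read off from $Y$:
\begin{itemize}
\item $Y>0$ near $Q_3$ means $f$ decreases to zero, so the trajectory ends at $\xi_0$ from the left with $(f^m)'(\xi_0)<0$.
\item $Y<0$ near $Q_2$, for trajectories emanating as $\xi$ increases from $\xi_0$, means $f$ increases from zero, so $(f^m)'(\xi_0)>0$.
\end{itemize}
The constraint $\delta<\xi_0$ is automatic. Finally, the $z$ component decays like $w^{p/m}$, which is consistent with $Z\sim\xi_0^{\sigma+2}f^{p-m}\to0$ relative to $Y$.

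\textbf{Main obstacle.} The most delicate step is proving the finiteness of $\xi_0$ and the exact limit of $(f^m)'$, together with careful bookkeeping of orientations between $\eta$, $\eta_2$ and the $\pm$ sign. I would handle this exactly as in \cite[Lemma 3.3]{IS25b}.
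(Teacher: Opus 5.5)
Your route is the one the paper intends. The paper omits the proof and refers to \cite[Lemma 3.3]{IS25b}: linearize \eqref{PSinf2} at the origin, where the eigenvalues are $-1,-p,-m$, and then undo the changes of variables. The analytic part of your translation is fine. The variable $w$ decays exponentially in the chart time, so $\int w\,d\eta_2<\infty$ gives a finite $\xi_0>0$. Moreover, $x\sim Cw^{1/m}$ is equivalent (up to absolute values) to $f^{m-1}f'$ tending to a nonzero constant, which gives $f(\xi_0)=0$ and $(f^m)'(\xi_0)\neq0$.

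The sign bookkeeping, however, is wrong, and it is exactly what decides which point is the source and the sign of $(f^m)'(\xi_0)$. You reach the correct statements only through errors that cancel.

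\begin{enumerate}
\item You have the two points the wrong way round. $Q_2=(0,1,0,0)$ is the point with $Y\to+\infty$, and $Q_3=(0,-1,0,0)$ is the one with $Y\to-\infty$. This is how they are used in Propositions \ref{prop.Z0} and \ref{prop.left}.
\item You have the meaning of the sign of $Y$ reversed. Since $Y=\xi f'/f$ with $\xi,f>0$, $Y>0$ means $f$ is \emph{increasing}. So ``$Y>0$ near $Q_3$ means $f$ decreases to zero'' is false twice over.
\item Your orientation paragraph has the same defect. The $\pm$ sign in \eqref{PSinf2} is precisely what makes the chart flow orientation-equivalent to the original one. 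Your two bullets therefore already settle the stability question. An extra flip ``because $Y<0$ near $Q_2$'' would make $Q_2$ a sink, and ``after combining these orientations'' asserts the answer instead of deriving it. If you instead use $d\eta_2=Y\,d\eta$ literally, the chart system carries the minus sign at both points, and only then does the sign of $Y$ decide.
\end{enumerate}

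A clean fix is to work directly with $w=1/Y$. From \eqref{PSsyst},
\[
\frac{dw}{d\eta}=m+(N-2)w+zw-xw-\frac{p-m}{\sigma+2}\,x\longrightarrow m>0
\]
near the origin. So along any trajectory approaching the origin, $w$ is increasing and vanishes at a finite $\eta_0=\ln\xi_0$; this gives the finiteness of $\xi_0$ at once. It also fixes the directions:
\begin{itemize}
\item $Y\to+\infty$ only as $\eta\downarrow\eta_0$, so $Q_2$ is a source and $f$ grows from zero to the right of $\xi_0$;
\item $Y\to-\infty$ as $\eta\uparrow\eta_0$, so $Q_3$ is a sink and $f$ decays to zero from the left.
\end{itemize}
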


We are therefore left with the critical point $Q_4$, whose local analysis is considerably more delicate due to its non-hyperbolic nature. Following the approach developed in \cite[Section 4 and Appendix]{IMS25}, we have the following non-existence result, which can be derived from the arguments of \cite[Appendix]{IMS25}; see also \cite[Lemma 3.4]{IS25b} for an analogous statement in a closely related setting.
\begin{lemma}\label{lem.Q4}
	The critical point $Q_4$ is not connected to the finite part of the phase space associated with system \eqref{PSsyst}. More precisely, no orbit passing through the finite region of the phase space enters or emanates from $Q_4$.
\end{lemma}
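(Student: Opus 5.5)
The statement to prove is Lemma \ref{lem.Q4}: no orbit through the finite part of the phase space of \eqref{PSsyst} enters or leaves $Q_4=(0,0,1,0)$. My plan is to work in the chart projecting onto the $Z$-direction. I would set
$$
x=\frac{X}{Z}, \qquad y=\frac{Y}{Z}, \qquad w=\frac{1}{Z},
$$
and multiply by $w$, i.e.\ use the new independent variable $d\eta_3=Z\,d\eta$. This turns \eqref{PSsyst} into a polynomial system in which $Q_4$ is the origin. A direct computation gives
$$
\dot x=x\bigl[-\sigma w-(p-1)y\bigr],\qquad \dot w=-w\bigl[(\sigma+2)w+(p-m)y\bigr],
$$
$$
\dot y=xw-(N-2)yw-w-(\sigma+2)yw-py^2+\frac{p-m}{\sigma+2}xy.
$$
The linear part at the origin is identically zero except for the term $-w$ in $\dot y$. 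So the origin is a highly degenerate, non-hyperbolic point, which is why the paper calls it delicate. Orbits from the finite region correspond to $x>0$, $w>0$, while orbits inside the invariant planes $\{x=0\}$ or $\{w=0\}$ are irrelevant.

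The key observation is that both planes $\{x=0\}$ and $\{w=0\}$ are invariant, and their growth rates are governed by $y$. In $\{w=0\}$ the flow reduces to $\dot y=-py^2+\frac{p-m}{\sigma+2}xy$ and $\dot x=-(p-1)xy$, which can be analysed completely. Following \cite[Section 4 and Appendix]{IMS25}, I would resolve the origin by a quasi-homogeneous (weighted) blow-up. The weights should be chosen to balance $-w$ against $-py^2$, for instance $w\sim r^2$, $y\sim r$, and $x\sim r^{k}$ with a suitable $k$. I would then study the critical points on the exceptional divisor, chart by chart ($\bar w=1$, $\bar y=\pm1$, $\bar x=1$).

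In the chart $\bar w=1$, writing $y=r u$ and $w=r^2$, the divisor equation is approximately $u'=-1-pu^2+(p-m)u^2/2$, up to the exact reparametrisation and weights. This equation has no real equilibria because $-1-\text{const}\cdot u^2<0$. The expected conclusion is therefore that on the divisor all critical points lie in the directions $\bar w=0$. There they coincide with the invariant plane $\{w=0\}$ or with $\{x=0\}$, and their invariant manifolds lie in those planes. The conclusion would be that no trajectory with $x,w>0$ reaches the origin in either time direction.

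An alternative, more elementary route, which I would try in parallel, uses monotonicity. Near $Q_4$ we have $Z\to\infty$ while $X/Z$ and $Y/Z$ tend to $0$. By the third equation of \eqref{PSsyst}, $Z$ can only grow when $Y>-(\sigma+2)/(p-m)$. Meanwhile $\dot Y\approx -Z-mY^2$ is very negative when $Z$ is large, so $Y$ is driven to $-\infty$ in finite "time", i.e.\ towards $Q_2$ or $Q_3$ and not towards $Q_4$. Making this quantitative means comparing $\dot Y\le -Z/2$ with $\dot Z/Z\le \sigma+2+(p-m)Y$. Along any orbit with $Z\to\infty$ this comparison forces $Y\to-\infty$, which contradicts $Y/Z\to0$ with $Z$ growing. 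The backward-time case is handled symmetrically, now with $Z$ decreasing requiring $Y<-(\sigma+2)/(p-m)$.

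The main obstacle is the degeneracy at $Q_4$: one must exclude orbits spiralling or approaching tangentially to the divisor with $x,w>0$. I expect the blow-up charts where $\bar y=\pm1$ to require a second blow-up, and I would handle those by the same argument as in \cite[Appendix]{IMS25}.
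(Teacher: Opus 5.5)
Your $Z$-chart is computed correctly, and a quasi-homogeneous blow-up of its origin is a workable route. The paper gives no argument of its own here; it only refers to the appendix of \cite{IMS25}. However, neither of your two routes closes as written.

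The elementary route rests on a false estimate. Near $Q_4$ you only know $X,Y=o(Z)$. This still allows $\frac{p-m}{\sigma+2}XY\gg Z$ (e.g.\ $X\sim Z^{9/10}$, $Y\sim Z^{1/5}$), and at such points $\dot Y>0$, so $\dot Y\le -Z/2$ fails. The bound does hold on $\{Y\le 0\}$, where $XY\le 0$. That is enough to exclude orbits emanating from $Q_4$, and to force $Y>0$ eventually on an orbit entering it. In that remaining regime, however, $XY$, $Z$ and $mY^2$ genuinely compete, and this is exactly what the blow-up has to resolve.

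In the blow-up, your concluding claim (that the invariant manifolds of the divisor equilibria lie in $\{x=0\}\cup\{w=0\}$) is false at one point, and the degeneracy is not where you expect it. You missed that $Q_4$ is not isolated: in your chart the half-line $\{y=w=0\}$ consists of equilibria, namely the points $Q_\gamma$, which accumulate at $Q_4$ as $\gamma\to0$.

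Take $k=1$, i.e.\ weights $(1,1,2)$; for $k>1$ the point discussed below becomes nilpotent. Then $\frac{p-m}{\sigma+2}xy$ is principal. In the chart $\bar w=1$ there are still no equilibria, since $v'=-\frac{p+m-2}{2}uv$ forces $uv=0$ and then $u'<0$.

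The divisor equilibria in the directions $\pm y$ and $y=\frac{p-m}{\sigma+2}x$, $w=0$, are hyperbolic, so no second blow-up is needed in $\bar y=\pm1$. Their relevant manifolds lie in $\{x=w=0\}$, in $\{w=0\}$, or in the divisor.

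The remaining equilibrium is the direction $+x$, the endpoint of the line of $Q_\gamma$'s. In the chart $x=r$, $y=ru$, $w=r^2s$ (time divided by $r$) the system reads
\[
r'=-r\bigl[(p-1)u+\sigma rs\bigr],\qquad u'=-s-u^2+\frac{p-m}{\sigma+2}u+rs(1-Nu),\qquad s'=s\bigl[(p+m-2)u+(\sigma-2)rs\bigr].
\]
This point has eigenvalues $0,0,\frac{p-m}{\sigma+2}$ and a two-dimensional center manifold $u=\frac{\sigma+2}{p-m}s+\dots$. That manifold contains points with $r,s>0$, i.e.\ with $x,w>0$. So you must compute the reduced flow:
\begin{itemize}
\item Forward time: any orbit converging forward must lie on the center manifold, and there $s'\simeq\frac{(p+m-2)(\sigma+2)}{p-m}s^2>0$, so no orbit with $r,s>0$ enters the point.
\item Backward time: the exact identity $(r^{p+m-2}s^{p-1})'=-[\sigma(m-1)+2(p-1)]\,rs\,r^{p+m-2}s^{p-1}$ (which is just $X^{p-m}Z^{m-1}=Ce^{[\sigma(m-1)+2(p-1)]\eta}$) forbids $\alpha$-limit points on $\{r=0\}\cup\{s=0\}$ in this chart.
\end{itemize}

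Finally, the ``spiralling'' you worry about is excluded by Poincar\'e--Bendixson on the divisor. Its interior contains no equilibria, hence no periodic orbits. The boundary flow has no cycle: along $\bar x=0$ it runs $+y\to-y$, and along $\bar w=0$ it runs $+y\to E\leftarrow+x\to-y$, where $E$ is the direction $y=\frac{p-m}{\sigma+2}x$. Hence limit sets on the divisor are single equilibria.
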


\section{Some preparatory results of global analysis}\label{sec.prep}

Before proceeding to the proof of the main existence theorems, we establish a number of preliminary results concerning the global phase space structure of the system \eqref{PSsyst}. These properties will provide the main ingredients for the shooting arguments developed in the subsequent sections.

A key step in the analysis of the trajectories of interest in the phase space associated to the system \eqref{PSsyst} consists in understanding the global behavior of several distinguished trajectories contained in invariant subsets. These trajectories play the role of separating or bounding orbits belonging to the invariant manifolds involved in the main shooting procedures and will serve as reference configurations throughout the global analysis.

To be more specific, we study the global dynamics of some specific orbits lying in the invariant planes $\{X=0\}$ and $\{Z=0\}$ of the system \eqref{PSsyst} (and, when convenient, in the corresponding invariant subsets arising in the compactified systems introduced in Section \ref{sec.local}). We begin with the invariant plane $\{X=0\}$, where the system \eqref{PSsyst} reduces to
\begin{equation}\label{systX0}
	\left\{\begin{array}{ll}
		\dot{Y}=-(N-2)Y-Z-mY^2,\\[2mm]
		\dot{Z}=Z(\sigma+2+(p-m)Y).
	\end{array}\right.
\end{equation}
The following result gives the phase-plane configuration of the system \eqref{systX0}.
\begin{proposition}\label{prop.X0}
	Let $N\geq3$ and assume that $\sigma$, $m$ and $p$ satisfy \eqref{range.exp}. Consider the invariant plane $\{X=0\}$ associated with the system \eqref{systX0}. Then the following properties hold:
\begin{enumerate}
		\item The orbit denoted by $l_{\infty}$ in \eqref{lC}, emerging from the critical point $P_0$ inside the plane $\{X=0\}$ is a heteroclinic connection. More precisely,
		\begin{itemize}
			\item if $p>p_S(\sigma)$, then $l_{\infty}$ connects $P_0$ to $P_2$;
			\item if $p=p_S(\sigma)$, then $l_{\infty}$ connects $P_0$ to $P_1$ and is explicitly given by
			\begin{equation}\label{cylinder}
				Z=-\frac{N+\sigma}{N-2}(mY+N-2)Y.
			\end{equation}
		\end{itemize}
		
		\item Assume that $p>p_S(\sigma)$. Then the unique trajectory contained simultaneously in the stable manifold of $P_1$ and in the invariant plane $\{X=0\}$ is a heteroclinic orbit connecting the critical point at infinity $Q_2$ to $P_1$.
	\end{enumerate}
\end{proposition}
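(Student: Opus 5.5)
Our plan is a phase-plane analysis of the planar quadratic system \eqref{systX0} in the half-plane $\{Z\geq0\}$. Its finite critical points there are $P_0=(0,0)$, $P_1=(-(N-2)/m,0)$ and, for $p>p_S(\sigma)$, $P_2=(-(\sigma+2)/(p-m),Z(P_2))$. Restricted to the plane, $P_0$ is a saddle with unstable direction along $Y=-Z/(N+\sigma)$ (the orbit $l_\infty$) and stable direction along the $Y$-axis. $P_1$ is a saddle whose stable direction lies off the $Z=0$ axis. By Lemma \ref{lem.P2}, $P_2$ is a stable node or focus inside the plane, since its stable manifold lies in $\{X=0\}$.

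\textbf{The case $p=p_S(\sigma)$.} Here we check directly that the parabola \eqref{cylinder} is invariant. Set $G(Y,Z)=Z+\frac{N+\sigma}{N-2}(mY+N-2)Y$. We differentiate $G$ along \eqref{systX0} and substitute $Z$ from $G=0$. Using $p_S(\sigma)-m=m(\sigma+2)/(N-2)$, the result vanishes identically; this is a routine computation. The arc $Y\in(-(N-2)/m,0)$ has $Z>0$, has the tangent $Y\sim-Z/(N+\sigma)$ at $P_0$, and joins $P_0$ to $P_1$. It therefore coincides with $l_\infty$.

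\textbf{The case $p>p_S(\sigma)$: trapping $l_\infty$.} Along $l_\infty$ we have $\dot Y<0$ initially, and $Z$ increases while $Y>-(\sigma+2)/(p-m)$. We first trap the orbit. The parabola $G=0$ is no longer invariant, and we compute the sign of $\dot G$ on it. It equals $Z\big[(p-m)-(p_S-m)\big]Y\cdot(\text{positive factor})$, up to the exact constant. For $Y<0$ it has a fixed sign, which should show that the flow crosses the parabola from the inside (the region $G<0$ between the parabola and the $Y$-axis) towards the outside. Then $l_\infty$, which starts tangent to the parabola, must be checked to lie on the correct side near $P_0$ by comparing second-order terms. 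I expect this sign and side comparison to be the main obstacle.

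A cleaner alternative is to use the region bounded by $Y=0$ and the nullcline $Z=-(N-2)Y-mY^2$. The nullcline separates the regions where $\dot Y>0$ and $\dot Y<0$. Together with $Z$ bounded above by a suitable line, this bounds the orbit and prevents escape to $Q_2$ or $Q_4$; Lemma \ref{lem.Q4} and the local analysis at infinity help here. Next we exclude periodic orbits and cycles by the Dulac function $B=Z^{a}$ with $a$ chosen such that $\operatorname{div}(BF)=Z^{a}[(\sigma+2)(a+1)-(N-2)+(p-m)(a+1)Y-2mY]$. Picking $(p-m)(a+1)=2m$ makes this constant, $(\sigma+2)\frac{2m}{p-m}-(N-2)$, which is negative exactly when $p>p_S(\sigma)$. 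Poincar\'e--Bendixson then gives that the $\omega$-limit of $l_\infty$ is a critical point. It cannot be $P_0$: that would create a homoclinic loop, which the Dulac argument also rules out. It cannot be $P_1$, whose in-plane stable manifold comes from the region $Y<-(N-2)/m$, as part 2 will show. Hence the limit is $P_2$.

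\textbf{Part 2.} This is the stable separatrix of the saddle $P_1$ in $\{X=0\}$, taken backward in $\eta$. It lies in the region $Y<-(N-2)/m$, where $\dot Y<0$ for $Z\geq 0$ since $-(N-2)Y-mY^2<0$. We use that this region is positively invariant, or rather negatively invariant in backward time. Backward in time, $Y$ then decreases monotonically and $Z$ grows, yet $Y$ cannot tend to a finite limit, as there are no critical points there. So $Y\to-\infty$, and the ratio $Z/Y^2\to0$ since $\dot Z/Z$ is linear in $Y$ while $\dot Y\sim -mY^2$. The $\alpha$-limit is therefore $Q_2$, which by Lemma \ref{lem.Q23} is the unstable node.
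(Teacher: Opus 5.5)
Your treatment of $p=p_S(\sigma)$ is fine, up to a slip: $p_S(\sigma)-m=2m(\sigma+2)/(N-2)$, not $m(\sigma+2)/(N-2)$, and with the correct constant the invariance computation does close. Your Dulac function $Z^{a}$ with $(a+1)(p-m)=2m$ is also correct. The genuine gap is the trapping of $l_\infty$ for $p>p_S(\sigma)$, which is the heart of Part 1. You leave it open, and you also anticipate the wrong direction of crossing. On the arc of \eqref{cylinder} one has exactly $\dot G=(p-p_S(\sigma))\,YZ<0$. So the flow crosses the arc \emph{into} the bounded region $\{G<0,\ Z>0\}$, not out of it. Moreover, $l_\infty$ starts inside this region. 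Its expansion at $P_0$ is $Y=aZ-\frac{pa^2}{N+2\sigma+2}Z^2+\dots$ with $a=-1/(N+\sigma)$, while the arc is $Y=aZ-\frac{ma^2}{N-2}Z^2+\dots$, and the former lies to the left of the latter exactly when $p>p_S(\sigma)$. This one-way barrier is the one the paper points to: the curve in Figure \ref{figX0} across which the flow has a fixed direction. Together with Dulac and Poincar\'e--Bendixson it gives the result. Your ``cleaner alternative'' cannot replace it. The nullcline $\dot Y=0$ is not invariant; when $p<p_{JL}(\sigma)$, orbits converging to the focus $P_2$ cross it infinitely often. An upper bound on $Z$ does not address the real escape route, which is crossing $\{Y=-(N-2)/m\}$. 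Beyond that line $\dot Y<0$ and $\dot Z<0$, so the orbit would end at the stable node $Q_3$. It would not end at $Q_2$, which is an unstable node and can never be an $\omega$-limit. At $p=p_S(\sigma)$ the orbit $l_\infty$ ends at $P_1$ on that very line. Hence any trapping region must use $p>p_S(\sigma)$ essentially, and neither the nullcline nor a line bound does.

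Part 2 is wrong as written, and with it your exclusion of $P_1$ as the $\omega$-limit of $l_\infty$. In $\{Y<-(N-2)/m\}$ one has $\dot Y<0$, so orbits there move away from $P_1$ in forward time and cannot lie on its stable manifold. Indeed, the stable eigenvector of $P_1$ is $(1,N-2-\mu)$ with $\mu=\sigma+2-(p-m)(N-2)/m<0$, which points into $\{Y>-(N-2)/m,\ Z>0\}$. You also reverse time: $Y$ \emph{increases} backward in that region. You then identify $Y\to-\infty$ with $Q_2$, whereas $Q_2$ corresponds to $Y\to+\infty$ and $Y\to-\infty$ is the attractor $Q_3$.

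The parabola again gives the correct argument. We have $(N-2-\mu)-(N+\sigma)=\frac{N-2}{m}(p-p_S(\sigma))>0$, so the separatrix reaches $P_1$ from $\{G>0,\ Z>0\}$. That set is negatively invariant, since its complement in $\{Z>0\}$ is positively invariant. It is also contained in $\{\dot Y<0\}$. Hence $Y$ increases along the whole backward orbit, and $Z$ decreases backward once $Y>-(\sigma+2)/(p-m)$. The only finite candidates for the $\alpha$-limit are excluded: $P_0$, whose unstable orbit in $\{Z>0\}$ is $l_\infty\subset\{G<0\}$, and $P_2\in\{G<0\}$. So $Y\to+\infty$ and the $\alpha$-limit is $Q_2$. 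The same separation, $l_\infty\subset\{G<0\}$ versus the separatrix in $\{G>0\}$, rules out $P_1$ in Part 1. $P_0$ is excluded simply because its stable manifold is the axis $\{Z=0\}$.
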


The proof of Proposition \ref{prop.X0} is totally identical to the one of either \cite[Lemmas 5.1--5.2]{IMS25} or \cite[Lemma 2.8]{IMS26}, to which we refer. For the reader's convenience, we give in Figure \ref{figX0} a visual representation of the results stated in Proposition \ref{prop.X0}, where the arrows indicate the direction of the flow of the system \eqref{systX0} on or across the relevant curves for the proof. In particular, the curve directly connecting $P_0$ to $P_1$ in Figure \ref{figX0} is the explicit curve \eqref{cylinder}, which is fundamental in the proof since the direction of the flow across it is always the same, while the curve connecting $P_0$ to $P_1$ but passing through $P_2$ in Figure \ref{figX0} is the isocline $\dot{Y}=0$.

\begin{figure}[ht!]
	\begin{center}
		\includegraphics[width=8.5cm,height=6cm]{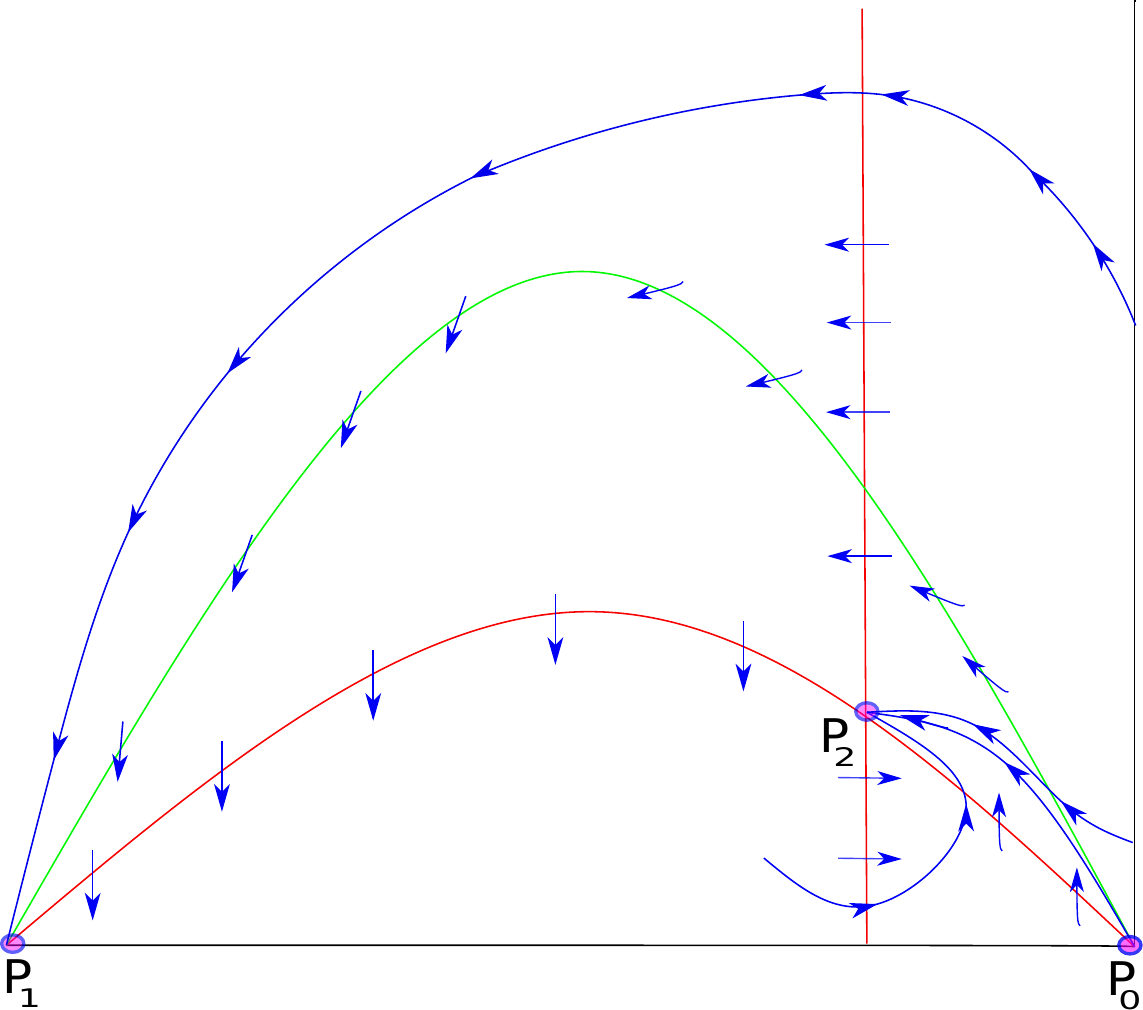}
	\end{center}
	\caption{Trajectories in the invariant plane $\{X=0\}$ for $p>p_S(\sigma)$.}\label{figX0}
\end{figure}

We next investigate the distinguished trajectory corresponding to the value $C=0$ in \eqref{lC}, which is entirely contained in the invariant plane $\{Z=0\}$. Restricting \eqref{PSsyst} to this plane, we obtain the reduced system
\begin{equation}\label{systZ0}
	\left\{\begin{array}{ll}
		\dot{X}=X(2-(m-1)Y),\\[2mm]
		\dot{Y}=X-(N-2)Y-mY^2+\dfrac{p-m}{\sigma+2}XY.
	\end{array}\right.
\end{equation}
Recalling the change of variables \eqref{change2}, we note that the invariant plane $\{Z=0\}$ corresponds to the plane $\{z=0\}$ in the compactified system \eqref{PSinf1}, which reduces to 
\begin{equation}\label{systz0}
	\left\{\begin{array}{ll}
		\dot{x}=x[(m-1)y-2x],\\[2mm]
		\dot{y}=-y^2-\dfrac{p-m}{\sigma+2}y+x-Nxy.
	\end{array}\right.
\end{equation}
The configuration of the phase plane associated to the system \eqref{systZ0} (or to \eqref{systz0}) is established in the following proposition.

\begin{proposition}\label{prop.Z0}
	Let $N\geq3$, and assume that $\sigma$, $m$ and $p$ satisfy \eqref{range.exp}. Then the following properties hold.
	\begin{enumerate}
		\item The orbit corresponding to $C=0$ in \eqref{lC}, denoted by $l_0$, is a heteroclinic connection from $P_0$ to $P_3$.
		
		\item The unique orbit contained in the unstable manifold of $Q_5$ within the plane $\{Z=0\}$ is a heteroclinic connection from $Q_5$ to $P_3$.
		
		\item The unique orbit entering $Q_1$ through its center manifold within the plane $\{Z=0\}$ is a heteroclinic connection from $P_1$ to $Q_1$.
	\end{enumerate}
\end{proposition}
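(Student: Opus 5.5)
Within the invariant plane $\{Z=0\}$ I would run a phase-plane argument for the two-dimensional system \eqref{systZ0}, together with its compactified version \eqref{systz0} near infinity. In this plane the only critical points are $P_0$, $P_1$, $P_3$ (finite) and $Q_1$, $Q_5$, $Q_2$, $Q_3$ (at infinity). By Lemma \ref{lem.P0P1P3}, restricted to $Z=0$:
\begin{itemize}
\item $P_0$ is an unstable node.
\item $P_1$ is an unstable node inside this plane.
\item $P_3$ is a stable node or stable focus inside this plane.
\end{itemize}
By Lemmas \ref{lem.Q1QgQ5} and \ref{lem.Q23}:
\begin{itemize}
\item $Q_5$ is a saddle with one unstable direction pointing into $\{z=0,x>0\}$.
\item $Q_1$ has a one-dimensional stable center manifold in $\{z=0\}$.
\item $Q_2$ is a source and $Q_3$ is a sink.
\end{itemize}
The strategy is to construct, for each of the three orbits, a positively (or negatively) invariant region that forces the claimed limit. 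Then I rule out periodic orbits by a Dulac function, so that Poincaré--Bendixson gives the conclusion.

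\textbf{Step 1 (no cycles).} I would look for a Dulac multiplier of the form $B(X,Y)=X^{a}$ with $X>0$. A direct computation gives
$$
\mathrm{div}(BF)=X^{a}\Bigl[2a-a(m-1)Y-(N-2)-2mY+\frac{p-m}{\sigma+2}X-(m-1)Y\Bigr],
$$
which I want to make sign-definite. Choosing $a$ to cancel the $Y$ terms, i.e. $a(m-1)=-2m-(m-1)$, leaves $2a-(N-2)+\frac{p-m}{\sigma+2}X$, and this is not of constant sign. So I would instead try $B=X^{a}e^{bY}$, or accept the weaker tool of trapping regions.

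A cleaner route is geometric. The line $Y=2/(m-1)$ is where $\dot X=0$, and it is crossed in only one direction on each side of $P_3$. A periodic orbit would have to encircle $P_3$, the only critical point in $X>0$. I would then show that the divergence along the $Y$-nullcline region prevents this, for instance via Bendixson on the half-plane $Y<2/(m-1)$, where the orbit must spend time. I expect this cycle exclusion to be the main obstacle. What I would try first is the Dulac function $X^{a}$ combined with the fact that, for $Y\le 2/(m-1)$ near $P_3$, the linear term dominates.

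\textbf{Step 2 (item 1).} The orbit $l_0$ leaves $P_0$ tangent to $Y=X/N$, hence into $Y>0$. On the line $Y=0$ with $X>0$ we have $\dot Y=X>0$, so $l_0$ stays in $\{Y>0\}$. On the line $Y=2/(m-1)$ we have $\dot X=0$. In $\{0<Y<2/(m-1)\}$ the variable $X$ is increasing, and I would bound $X$ using the $Y$-isocline. The $\omega$-limit is then a critical point: it cannot be $Q_3$, because $Y$ cannot blow up while $\dot Y<0$ for large $Y$ in the region $Y>2/(m-1)$ near the isocline, and it cannot be $Q_1$ or $Q_5$, since these require $Y/X\to$ nonpositive values. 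Hence it is $P_3$.

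\textbf{Steps 3 and 4 (items 2 and 3).} For item 2, the orbit from $Q_5$ enters with $y=Y/X\approx-(p-m)/(\sigma+2)<0$ and $X$ large. Since $\dot Y>0$ there, it crosses $Y=0$ and is then trapped as in Step 2, so it converges to $P_3$. For item 3, I would argue backwards in time along the unique center-manifold orbit of $Q_1$. It lies in $\{Y<0\}$, where $\dot X>0$, so $X$ decreases backward. The backward limit must then be a source-type point with $X=0$ or $X\to0$; among $P_0$, $P_1$ and $Q_2$, the connection to $P_0$ is excluded because $P_0$'s only orbit in $Z=0$ is $l_0$, which goes to $P_3$. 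To exclude $Q_2$ I would use the curve $Y=-(N-2)/m$ and the sign of $\dot Y$ there. This leaves $P_1$.
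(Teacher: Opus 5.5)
\textbf{Gap: the exclusion of closed orbits in your Step 1.} Items 1 and 2 depend on this step. Without excluding closed orbits in $\{X>0\}$, Poincar\'e--Bendixson does not let you conclude that an orbit in the first quadrant converges to $P_3$, since it could accumulate on a limit cycle around $P_3$. Your multiplier $X^a$ fails, as you note. Your fallback also fails: any periodic orbit must surround $P_3$, which lies on the line $\{Y=2/(m-1)\}$, so no periodic orbit is contained in $\{Y<2/(m-1)\}$, and Bendixson's criterion there gives nothing.

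The missing idea is to weight by a power of $Y$ as well and work only in the open first quadrant. With $B=X^aY^b$, the weighted divergence of \eqref{systZ0} is
\[
X^a\Bigl\{\bigl[2(a+1)-(b+1)(N-2)\bigr]Y^b-\bigl[(a+1)(m-1)+(b+2)m\bigr]Y^{b+1}+bXY^{b-1}+(b+1)\tfrac{p-m}{\sigma+2}XY^b\Bigr\}.
\]
Choosing $a+1=-m(N-2)/(mN-N+2)$ and $b+1=-2m/(mN-N+2)$ kills both brackets. What remains is $X^{a+1}Y^{b-1}\bigl[b+(b+1)\frac{p-m}{\sigma+2}Y\bigr]$, which is negative for $X,Y>0$ because $b<b+1<0$. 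This suffices:
\begin{itemize}
\item a cycle in $\{X>0\}$ must enclose $P_3$, the only critical point there;
\item it cannot meet $\{Y=0,\ X>0\}$, where $\dot Y=X>0$ allows crossing in one direction only;
\item so it lies in the open first quadrant, where Dulac's criterion rules it out.
\end{itemize}

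\textbf{Other errors to fix.}
\begin{itemize}
\item In $\{Z=0\}$, $P_0$ is a saddle (eigenvalues $2$ and $-(N-2)$, stable manifold on the $Y$-axis), not an unstable node. This is exactly why $l_0$ is the only orbit leaving $P_0$ in that plane, which you use in item 3.
\item $Q_5$ lies in the direction $Y/X=(p-m)/(\sigma+2)>0$. This follows from its equator coordinates ($\overline{Y}>0$) or from the zeros of $-y^2+\frac{p-m}{\sigma+2}y$ in \eqref{PSinf1}; the minus sign in the chart coordinates of Lemma~\ref{lem.Q1QgQ5} is a slip. So its unstable orbit enters the open first quadrant directly.
\item Your item 2 argument is wrong even on its own terms. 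On $Y=-\frac{p-m}{\sigma+2}X$ one has $\dot Y\approx-(m+1)\frac{(p-m)^2}{(\sigma+2)^2}X^2<0$ for large $X$, not $\dot Y>0$.
\item In item 1, $Q_3$ sits at $Y\to-\infty$ and is excluded trivially by $Y>0$. $Q_5$ is excluded because its stable manifold lies in $\{x=0\}$, not because of the sign of $Y/X$.
\item ``Bound $X$ using the $Y$-isocline'' is not an argument. The paper rules out escape to infinity by passing to the chart \eqref{PSinf1}. There the only possible limits from the first quadrant are $Q_1$, which by \eqref{centerQ1} is reached only from $\{Y<0\}$, and $Q_5$.
\end{itemize}

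\textbf{Item 3} is essentially the paper's argument. $Q_2$ is excluded simply because the orbit stays in $\{Y<0\}$, by positive invariance of $\{Y>0\}$; the curve $Y=-(N-2)/m$ is not needed. $Q_3$ is excluded because it is a sink. Monotonicity of $X$ in $\{Y<0\}$ rules out $\alpha$-limit sets that are not points.
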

\begin{proof}
	The argument follows the same general strategy as in \cite[Lemma 4.2]{IS25b} and \cite[Lemma 2.9]{IMS26}, although we provide the details since this phase-plane analysis will play an important role in the subsequent global study. We divide the proof into several steps. 
	
	\medskip
	
	\noindent \textbf{Step 1. Positive invariance of the first quadrant.} We first describe the critical points governing the dynamics on the invariant plane $\{Z=0\}$. In the reduced system \eqref{systZ0}, the critical points are
	\begin{equation*}
		P_0=(0,0),\qquad
		P_1=\left(0,-\dfrac{N-2}{m}\right),\qquad
		P_3=\left(\dfrac{2(\sigma+2)(Nm-N+2)}{[\sigma(m-1)+2(p-1)](m-1)},\dfrac{2}{m-1}\right),
	\end{equation*}
	while the compactified system \eqref{systz0} contains the critical points at infinity
	\begin{equation*}
		Q_1=(0,0),\qquad
		Q_5=\left(0,\dfrac{p-m}{\sigma+2}\right).
	\end{equation*}
	By Lemma \ref{lem.P0P1P3}, the critical point $P_0$ is a saddle point for system \eqref{systZ0}, while $P_1$ is an unstable node and $P_3$ is a stable node. Likewise, Lemma \ref{lem.Q1QgQ5} shows that $Q_1$ possesses a one-dimensional center-stable manifold, whereas $Q_5$ is a saddle point for the compactified system \eqref{systz0}. By the Stable Manifold Theorem \cite[Section 2.7]{Pe}, the unique orbit contained in the unstable manifold of $P_0$ leaves this critical point tangent to the eigenvector $e_1=(N,1)$, and therefore enters the first quadrant $\{X>0,\ Y>0\}$. Moreover, the direction of the flow of the system \eqref{systZ0} across the line $\{Y=0\}$ points towards the region $\{Y>0\}$. Hence, the first quadrant is positively invariant and the orbit emerging from $P_0$ remains in the region $\{X>0,\ Y>0\}$ for all future times. 
	
	\medskip
	
\noindent \textbf{Step 2. No periodic orbits in the first quadrant.} To determine its $\omega$-limit set, we next prove that system \eqref{systZ0} does not possess limit cycles in the first quadrant. To this end, we apply Dulac's criterion \cite[Theorem 2, Section 3.9]{Pe}. More precisely, we multiply the vector field of \eqref{systZ0} by a factor $X^aY^b$, where
	\begin{equation*}
		a=-\dfrac{2mN-N-2m+2}{mN-N+2},
		\qquad
		b=-\dfrac{mN-N+2m+2}{mN-N+2}.
	\end{equation*}
	A direct computation shows that the divergence of this weighted vector field is given by
	\begin{equation*}
			D(X,Y)
			:=\frac{\partial}{\partial X}\bigl(X^aY^bX'\bigr)
			+\frac{\partial}{\partial Y}\bigl(X^aY^bY'\bigr)=
			bX^{a+1}Y^{b-1}
			+\frac{(b+1)(p-m)}{\sigma+2}X^{a+1}Y^b.
	\end{equation*}
	Since
	\begin{equation*}
		b<0,
		\qquad
		b+1=-\dfrac{2m}{mN-N+2}<0,
	\end{equation*}
	we readily obtain that $D(X,Y)<0$ throughout the first quadrant $\{X>0,\ Y>0\}$. Therefore, Dulac's criterion implies that system \eqref{systZ0} does not possess periodic orbits contained in this region. Moreover, since the axis $\{X=0\}$ is invariant and the flow across the axis $\{Y=0\}$ has a fixed direction, no periodic orbit can intersect the boundary of the first quadrant. Hence there are no closed orbits in the closure of $\{X>0,\ Y>0\}$.	
	
	\medskip
	
\noindent \textbf{Step 3. Trajectories from $P_0$ and $Q_5$.} We show next that the trajectory emerging from $P_0$ remains confined in a compact subset of the first quadrant. Arguing by contradiction, let us assume that this trajectory reaches infinity, that is, either $X(\eta)\to\infty$ or $Y(\eta)\to\infty$ (or both) as $\eta\to\infty$. Since the critical point $Q_2$, characterized as the limit of the trajectories for which
	$$
	\lim\limits_{\eta\to-\infty}Y(\eta)=\infty, \quad \lim\limits_{\eta\to-\infty}\frac{Y(\eta)}{X(\eta)}=\infty
	$$
	has opposite stability, we easily infer that $Y/X$ remains bounded along the trajectory. This fact implies that the $X(\eta)\to\infty$ as $\eta\to\infty$ and thus the part 
	$$
	\{(X(\eta),Y(\eta)): \eta>\eta_0>-\infty\}
	$$
	remains confined in a compact subset of the first quadrant of the system \eqref{systz0} after applying the change of variable \eqref{change2}. The non-existence of periodic orbits and the Poincaré--Bendixson's theorem yield that the trajectory $l_0$ must end in a finite critical point of the system \eqref{systz0} with $x=0$, that is, either $Q_1$ or $Q_5$. But the latter has opposite stability according to Lemma \ref{lem.Q1QgQ5} (its unique stable manifold is contained in the $y$-axis) and for the former, the trajectories reaching it arrive from the quadrant $\{X>0, Y<0\}$ according to the expansion \eqref{centerQ1}. We thus arrive to a contradiction, showing that the trajectory $l_0$ stays forever in a bounded set of the first quadrant. Another application of the Poincaré--Bendixson's theorem then readily gives that $l_0$ is a heteroclinic orbit connecting $P_0$ to the critical point $P_3$, as claimed.
	
	Actually, similar arguments show that every trajectory contained in the first quadrant $\{X>0,\ Y>0\}$ converges to $P_3$. Since the unstable manifold of $Q_5$ enters the interior of this region, it follows that the corresponding orbit is a heteroclinic connection from $Q_5$ to $P_3$. This proves the first two statements of the proposition.
	
	\medskip
	
	\noindent \textbf{Step 4. Trajectory entering $Q_1$.} It remains to determine the origin of the unique orbit entering $Q_1$ through its center manifold. By the expansion \eqref{centerQ1}, we have
	\begin{equation*}
		y=-\frac{\sigma+2}{p-m}x+o(x),
	\end{equation*}
	and therefore $y<0$ along every trajectory approaching $Q_1$ through its center manifold. Since $X>0$ and	$y=Y/X$, it follows that such trajectories satisfy $Y<0$ in a	neighborhood of $Q_1$ and, according to the previous argument, the trajectory reaching $Q_1$ and contained in the plane $\{Z=0\}$ will remain forever in the closure of the quadrant $\{X>0, Y<0\}$. We now follow this orbit backwards in time. Since
	\begin{equation*}
		\dot X=X(2-(m-1)Y)>0
	\end{equation*}
	in the region $\{X>0,\ Y<0\}$, the mapping $\eta\mapsto X(\eta)$ is strictly increasing along the trajectory. By the first part of the proof, the unique orbit emerging from $P_0$ is a heteroclinic connection from $P_0$ to $P_3$, whence the trajectory reaching $Q_1$ inside the invariant plane $\{Z=0\}$ cannot start from $P_0$. Moreover, trajectories starting from $Q_2$ are confined in the closure of the first quadrant $\{X>0, Y>0\}$ and cannot reach $Q_1$. Regarding the trajectory entering $Q_1$ in variables $(X,Y)$, we deduce from the monotonicity of $X(\eta)$, the opposite stability of $Q_3$, and similar arguments as in Step 3 that the critical point $P_1$ is the only possible origin of the orbit. Therefore, the unique orbit entering $Q_1$ through its center manifold is a heteroclinic connection from $P_1$ to $Q_1$, completing the proof.
\end{proof}

The following preparatory result concerns the dynamics on the invariant plane $\{x=0\}$ of the compactified system \eqref{PSinf1}, which corresponds to trajectories approaching infinity in the $X$-direction. In order to analyze the flow on this plane, we introduce the variable
\begin{equation*}
	w=xz\geq0,
\end{equation*}
which removes the singularity associated with the variable $Z$. Setting $x=0$ in the resulting system, we obtain the reduced system
\begin{equation}\label{PSsyst3.ext}
	\left\{
	\begin{array}{ll}
		\dot{y}=-y^2+\dfrac{p-m}{\sigma+2}y-w,\\[2mm]
		\dot{w}=(p+m-2)w,
	\end{array}
	\right.
\end{equation}
where derivatives are taken with respect to the variable $\eta_1$ introduced in \eqref{changexi}. The system \eqref{PSsyst3.ext} possesses the two critical points
\begin{equation*}
	Q_1'=(0,0),
	\qquad
	Q_5'=\left(\dfrac{p-m}{\sigma+2},0\right).
\end{equation*}
These critical points contain the traces on $\{x=0\}$ of the critical points $Q_1$ and $Q_5$ of the full system \eqref{PSinf1}, but, due to the change of variable $w=xz$, they may contain some more trajectories that, in variables $(y,z)$, might have $z\to\infty$. Their local analysis is completely identical to that performed in \cite[Lemma 4.3]{IS25b}, and therefore we only state the resulting properties.
\begin{proposition}\label{prop.Q4.bis}
	The critical point $Q_5'$ is a saddle point of the system \eqref{PSsyst3.ext}, whereas $Q_1'$ is a saddle-node. The unique orbit contained in the unstable manifold of $Q_5'$
	connects $Q_5'$ to the stable node $Q_3$, with the mapping $\eta_1\mapsto y(\eta_1)$ strictly decreasing. Every orbit emerging from $Q_1'$ converges to $Q_3$ as well and, along such trajectories, the function $\eta_1\mapsto y(\eta_1)$, admits one local maximum.
\end{proposition}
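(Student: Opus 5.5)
The plan is to exploit the fact that system \eqref{PSsyst3.ext} is \emph{triangular}. The equation for $w$ decouples and gives $w(\eta_1)=w_0e^{(p+m-2)\eta_1}$. The $y$-equation is then a Riccati equation with an exponentially growing forcing,
$$
\dot y=-y^2+ay-w_0e^{(p+m-2)\eta_1},\qquad a:=\frac{p-m}{\sigma+2}>0 .
$$
All the claims will follow from a local linearization at the two critical points, combined with an invariant region bounded by the isocline $\{\dot y=0\}$.

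\textbf{Local analysis.} At $Q_5'$ the Jacobian is upper triangular, with entries $\partial_y\dot y=-2y+a=-a<0$, $\partial_w\dot y=-1$ and $\partial_w\dot w=p+m-2>0$. Hence $Q_5'$ is a hyperbolic saddle. Its stable manifold is the $y$-axis, and its unstable manifold is a single orbit entering $\{w>0\}$ tangent to the eigenvector $(-1,p+m-1+\ldots)$, that is, with $y$ decreasing.

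At $Q_1'$ I would compute the Jacobian in the same way. I would then follow the normalization of the chart used in \cite[Lemma 4.3]{IS25b}, where, after the change $w=xz$ and the time rescaling \eqref{changexi}, one eigenvalue becomes zero. A center manifold reduction then identifies $Q_1'$ as a saddle-node, with a one-parameter family of orbits leaving it into $\{w>0\}$. I expect this to be the main obstacle, because the sign bookkeeping of the $y$-coefficient between \eqref{PSinf1} and \eqref{systz0} must be checked carefully: the nonhyperbolic direction is inherited from the center manifold of $Q_1$ in Lemma \ref{lem.Q1QgQ5}. To settle it, I would first write the center manifold as a graph $w=h(y)$ (or $y=h(w)$) up to quadratic order, and read off the direction of the flow on it.

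\textbf{Global analysis and monotonicity.} Let $G(y,w):=w-ay+y^2$, so that $\{G=0\}$ is the isocline $\dot y=0$ and $\dot y=-G$. Along the flow,
$$
\dot G=\dot w+(2y-a)\dot y=(p+m-2)w-(2y-a)G ,
$$
so $\dot G=(p+m-2)w>0$ on $\{G=0,\ w>0\}$. Hence the region $\{G>0,\ w>0\}$ is positively invariant, and inside it $\dot y<0$.

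The unstable orbit of $Q_5'$ leaves along a direction with $\dot y<0$, so it lies in $\{G>0\}$ from the start. Therefore $y$ is strictly decreasing along the whole orbit.

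An orbit leaving $Q_1'$ may start in $\{G<0\}$, where $y$ increases. It must cross the parabola, because $w\to\infty$ while $G<0$ forces $w<ay-y^2\le a^2/4$. After the crossing it stays in $\{G>0\}$ by positive invariance. So $y$ has exactly one local maximum, attained at the crossing; if the orbit starts already in $\{G>0\}$, that maximum degenerates to the initial limit.

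\textbf{Convergence to $Q_3$.} Once in $\{G>0\}$, for large $\eta_1$ we have $\dot y\le -\tfrac12 w\to-\infty$, and comparison with the Riccati term gives $\dot y\le -y^2/2$ eventually. Thus $y\to-\infty$, in fact in finite $\eta_1$-time. Passing to the chart of \eqref{PSinf2} with variables $x=X/Y$, $w=1/Y$, we have $|Y|\to\infty$ with $X/Y\to0$ and $Z/Y\to0$. By Lemma \ref{lem.Q23} the limit point is the stable node $Q_3$, with the correct sign identified through $\eta_2$. To conclude I would check that $w/y^2$ stays bounded, so that the limit is the node and not $Q_4$ or $Q_\gamma$. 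This uses Lemma \ref{lem.Q4}, together with the estimate $y^2\ge w$, which holds eventually from the Riccati comparison.
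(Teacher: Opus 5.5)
\textbf{There is a genuine gap, and it lies in the reduced system itself.}

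You took \eqref{PSsyst3.ext} at face value, with $\dot w=(p+m-2)w$ decoupled, but that equation is misprinted. The culprit is a missing factor $y$ in the $w$-equation, not the sign of the $y$-coefficient you point to. The paper offers no argument beyond a citation, so the system has to be rederived. Differentiating $w=xz$ along \eqref{PSinf1} gives $\dot w=w[(p+m-2)y+(\sigma-2)x]$, so on $\{x=0\}$ the system is
\[
\dot y=-y^2+ay-w,\qquad \dot w=(p+m-2)\,y\,w,\qquad a=\frac{p-m}{\sigma+2}.
\]

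\textbf{Why your local step at $Q_1'$ fails.} For your triangular system the Jacobian at $Q_1'$ has eigenvalues $a$ and $p+m-2$, both positive. So $Q_1'$ would be an unstable node and the saddle-node claim would be false. No time rescaling or normalization can create a zero eigenvalue here. Your local step at $Q_1'$ is therefore not a postponed computation; it contradicts the system you use everywhere else. The same defect produces your ``degenerate'' case of orbits leaving $Q_1'$ directly into $\{G>0\}$. Along those orbits $y$ would have no local maximum, contrary to the statement.

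\textbf{What the correct system gives locally.} The Jacobian at $Q_1'$ is $\begin{pmatrix} a&-1\\ 0&0\end{pmatrix}$ and the center manifold is $w=ay-(p+m-1)y^2+O(y^3)$. On it, $G=-(p+m-2)y^2+\dots<0$ and $\dot y=(p+m-2)y^2+\dots$. Thus $Q_1'$ is a saddle-node whose whole half-neighbourhood in $\{w>0\}$ is a repelling parabolic sector. Every orbit leaving it into $\{w>0\}$ is tangent to this manifold with exponentially small deviation, hence starts inside $\{G<0\}$, and the degenerate case never occurs. At $Q_5'$ the eigenvalues are $-a$ and $(p+m-2)a$, and the unstable eigenvector is $(-1,(p+m-1)a)$.

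\textbf{Why your global step fails, and how to repair it.} You use $w\to\infty$ twice, and this is now false: $w$ grows only while $y>0$, and in fact $w\to0$ as $y\to-\infty$. Your invariant-region idea survives, but the justifications must change.

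On $\{G=0,\ w>0\}$ one has $0<y<a$, so $\dot G=(p+m-2)yw>0$ there. Hence $\{G>0,\ w>0\}$ is still positively invariant.

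In $\{G<0,\ w>0\}$ again $0<y<a$, so $y$ and $w$ both increase and stay bounded by $a$ and $a^2/4$. Every critical point has $w=0$, so the orbit cannot remain there and must cross the parabola. That crossing is the single strict maximum of $y$.

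In $\{G>0,\ w>0\}$, $y$ decreases. Suppose $y$ stayed positive; then $w$ would increase. Either $w\to\infty$, which forces $\dot y\le a^2/4-w\to-\infty$, or $(y,w)$ converges to a non-critical point with $w>0$. Both are absurd, and this also rules out convergence to $Q_1'$ or $Q_5'$. Hence $y$ becomes negative. After that, $\dot y<-y^2$ gives $y\to-\infty$ in finite $\eta_1$-time, with $w$ decreasing to $0$. This is the behaviour your final step identifies with $Q_3$.
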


Our next preparatory result proves that, if a trajectory of the system \eqref{PSsyst} remains forever in the half-space $\{Y<-(\sigma+2)/(p-m)\}$, then it either enters the critical point $Q_1$ and corresponds to the profile of a peaking solution or reaches the attractor $Q_3$. 
\begin{proposition}\label{prop.left}
	Let $m$, $p$ and $\sigma$ as in \eqref{range.exp} and assume that $(X,Y,Z)(\eta)$ is a trajectory of the system \eqref{PSsyst} such that 
\begin{equation}\label{condpeak}
	Y(\eta)<-\frac{\sigma+2}{p-m}, \quad \eta\in(\eta_0,\infty).
\end{equation}
Then the trajectory either reaches the stable node $Q_3$ or coincides with one of the trajectories $\mathcal{C}_k$ for $k\in(0,Z(P_2))$ defined in Remark \ref{rem.Ck}.
\end{proposition}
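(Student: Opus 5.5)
Since $Y<-(\sigma+2)/(p-m)<0$ on $(\eta_0,\infty)$, the first equation of \eqref{PSsyst} gives $\dot X=X(2-(m-1)Y)>2X$. Hence $X(\eta)\to\infty$ as $\eta\to\infty$, with $X$ increasing at least exponentially. The third equation gives $\dot Z=Z(\sigma+2+(p-m)Y)<0$, so $Z$ is strictly decreasing and bounded on $(\eta_0,\infty)$. The trajectory therefore has to go to infinity in the $X$-direction, and I will follow it in the chart \eqref{change2}, where $x=1/X\to0$ and $z=Z/X\to0$.

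The $\omega$-limit lies on the equator, and the candidates are $Q_1$, $Q_5$, $Q_\gamma$, $Q_2$, $Q_3$ and $Q_4$. I eliminate them in turn. $Q_4$ is excluded because $Z$ is bounded while $X\to\infty$ (and also by Lemma \ref{lem.Q4}). $Q_\gamma$ with $\gamma\in(0,1)$ would require $Z/X\to\kappa>0$, which is impossible since $z\to0$. $Q_5$ has $y>0$, whereas here $y=Y/X<0$; moreover its stable manifold lies in $\{x=0\}$ by Lemma \ref{lem.Q1QgQ5}. $Q_2$ is an unstable node by Lemma \ref{lem.Q23}, and in any case it requires $Y\to+\infty$. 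This leaves $Q_3$ and $Q_1$.

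To justify that the orbit actually converges, rather than oscillating near the equator, I would write the equation for $y$ from \eqref{PSinf1}, namely $y'=-y^2+\frac{p-m}{\sigma+2}y+x-Nxy-xz$. Suppose $y$ stays bounded. Since $x,z\to0$, the $\omega$-limit set lies in $\{x=0,z=0\}$, which is the $y$-axis. On that axis the dynamics is the one-dimensional flow $y'=-y^2+\frac{p-m}{\sigma+2}y$, whose only equilibria are $y=0$ (that is, $Q_1$) and $y=(p-m)/(\sigma+2)>0$ (that is, $Q_5$). A nonempty compact connected invariant set on the half-line $y\le0$ of this flow can only be $\{0\}$. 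Hence $y\to0$ and the orbit reaches $Q_1$. If instead $y$ is unbounded, then $y\to-\infty$: the term $-y^2$ dominates, and once $y$ is very negative it cannot recover. This gives $Y/X\to-\infty$ with $Y\to-\infty$, which is convergence to $Q_3$.

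In the $Q_1$ case, Lemma \ref{lem.Q1QgQ5} and Remark \ref{rem.Ck} show that the orbit enters along the center manifold, because the unstable direction is the $y$-axis. So it coincides with some $C_K$. It remains to show $K<Z(P_2)$.

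\textbf{Main obstacle.} The key step is this strict inequality. The expansion following \eqref{centerQ1} gives
$$Y+\frac{\sigma+2}{p-m}=\frac{\sigma+2}{p-m}(K-Z(P_2))\,x+o(x).$$
If $K>Z(P_2)$, this forces $Y>-(\sigma+2)/(p-m)$ near $Q_1$, contradicting \eqref{condpeak}.

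The case $K=Z(P_2)$ must also be excluded. In that case the orbit is the invariant line \eqref{estacionariaLine}, on which $Y\equiv-(\sigma+2)/(p-m)$. This is again incompatible with the strict inequality in \eqref{condpeak}. By uniqueness of $C_K$ for each $K$ (Remark \ref{rem.Ck}), the orbit must then be that line, so the case is ruled out.

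Therefore $K\in(0,Z(P_2))$, which completes the proof.
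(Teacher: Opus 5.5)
Your argument is correct and follows essentially the same route as the paper: monotonicity of $X$ and $Z$, passage to the chart \eqref{change2}, identification of the limit as $Q_1$ (bounded $y$) or $Q_3$ (unbounded $y$), and the expansion \eqref{centerQ1} to force $k<Z(P_2)$. Your observation that $\dot X>2X$ already gives $X\to\infty$ makes the paper's separate case $X_\infty<\infty$ superfluous. Your $\omega$-limit argument on the invariant $y$-axis, together with the explicit exclusion of $K=Z(P_2)$, is a slightly cleaner substitute for the paper's evaluation of the $y$-equation at maxima and minima.
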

\begin{proof}
We infer from the first and third equations of the system \eqref{PSsyst} that the mappings
$$
\eta\mapsto X(\eta), \quad \eta\mapsto Z(\eta), \quad \eta\in(\eta_0,\infty)
$$
are, respectively, increasing and decreasing. Hence the following limits exist: 
$$
X_{\infty}=\lim\limits_{\eta\to\infty}X(\eta), \quad Z_{\infty}=\lim\limits_{\eta\to\infty}Z(\eta)\in[0,Z(\eta_0)).
$$
Assume first that $X_{\infty}<\infty$. With respect to $Y(\eta)$, we have two possibilities:

$\bullet$ either $Y(\eta)$ is bounded for $\eta\in(\eta_0,\infty)$. If $Y(\eta)$ is convergent to some $Y_{\infty}\in(-\infty,-(\sigma+2)/(p-m)]$ we reach a contradiction, since $(X_{\infty},Y_{\infty},Z_{\infty})$ has to be a finite critical point of the system \eqref{PSsyst} with $X_{\infty}>0$ and $Y_{\infty}\leq-(\sigma+2)/(p-m)$ and there is no such point. Otherwise, there are $\{\eta_j^m\}_{j\geq1}$ and $\{\eta_j^M\}_{j\geq1}$ sequence of points of minimum, respectively maximum, of $Y(\eta)$, such that 
$$
\lim\limits_{j\to\infty}Y(\eta_j^m)=Y_{\infty}^m<Y_{\infty}^M=\lim\limits_{j\to\infty}Y(\eta_j^M).
$$
Thus, the $\omega$-limit set of the trajectory is the line segment 
$$
\{X=X_{\infty},  Y_{\infty}^m\leq Y\leq Y_{\infty}^M, Z=Z_{\infty}\}
$$
and, according to \cite[Theorem 2 and Corollary, Section 3.2]{Pe}, this line segment should be invariant for the flow of the system \eqref{PSsyst}. We reach a contradiction, as there is no such invariant horizontal segment in the finite part of the phase plane.

$\bullet$ or $Y(\eta)$ is unbounded for $\eta>\eta_0$, which implies that there is a sequence $\{\eta_j\}_{j\geq1}$ such that 
$$
\lim\limits_{j\to\infty}\eta_j=\infty, \quad \lim\limits_{j\to\infty}Y(\eta_j)=-\infty.
$$
The boundedness of $X_{\infty}$ and $Z_{\infty}$ implies that $(X,Y,Z)(\eta_j)$ converges to $Q_3$ as $j\to\infty$. Since $Q_3$ is a stable node, it follows that the trajectory reaches $Q_3$, which is one of the two claimed possibilities.

We are left now with the case $X_{\infty}=\infty$. Assume that the trajectory does not reach the stable node $Q_3$. This assumption entails that $(Y/X)(\eta)$ remains bounded for $\eta\in(\eta_0,\infty)$. We then pass to the $(x,y,z)$ variables of the system \eqref{PSinf1} and the trajectory under consideration is transformed into a trajectory on which $x=1/X\to0$, $z=Z/X\to0$ and $y=Y/X$ is bounded. If $y(\eta_1)$ converges as $\eta_1\to\infty$, then the trajectory ends up at a critical point of the form $(0,y_{\infty},0)$ with $-\infty<y_{\infty}\leq0$, and the only such point is $Q_1$. If $y(\eta_1)$ oscillates infinitely many times, we employ a similar argument as above with sequences of maxima and minima, and by evaluating the second equation of the system \eqref{PSinf1} at these maxima and minima and passing to the limit, we find that 
\begin{equation}\label{limmaxmin}
0=-(y_{\infty}^m)^2+\frac{p-m}{\sigma+2}y_{\infty}^m=-(y_{\infty}^M)^2+\frac{p-m}{\sigma+2}y_{\infty}^M,
\end{equation}
where
$$
y_{\infty}^m:=\liminf\limits_{\eta_1\to\infty}y(\eta_1), \quad y_{\infty}^M:=\limsup\limits_{\eta_1\to\infty}y(\eta_1).
$$
We deduce from \eqref{limmaxmin} that $y_{\infty}^m=y_{\infty}^M=0$ and thus the trajectory reaches the critical point $Q_1$. Finally, since the trajectory arrives at $Q_1$ from the half-space $\{Y<-(\sigma+2)/(p-m)\}$, we readily deduce from the expansion \eqref{centerQ1} and Remark \ref{rem.Ck} that it coincides with a trajectory $\mathcal{C}_k$ with $k\in(0,Z(P_2))$, completing the proof.
\end{proof}
We close this section with an analogous result analyzing trajectories remaining in the region limited by the planes $\{Y=2/(m-1)\}$ and $\{Y=-(\sigma+2)/(p-m)\}$. 
\begin{proposition}\label{prop.ret}
	Let $(X,Y,Z)(\eta)$ be a trajectory of the system \eqref{PSsyst} such that there exists $\eta_0\in\real$ with $X(\eta_0)>0$, $Z(\eta_0)>0$ and
	\begin{equation}\label{interm22}
		-\frac{\sigma+2}{p-m}<Y(\eta)<\frac{2}{m-1}, \qquad {\rm for \ any} \ \eta\in(\eta_0,\infty).
	\end{equation}
	Then, this trajectory enters either the critical point $Q_1$ or $Q_{\gamma_0}$ with $\gamma_0$ defined in Lemma \ref{lem.Q1QgQ5}.
\end{proposition}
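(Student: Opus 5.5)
Within the slab \eqref{interm22}, the equations for $X$ and $Z$ in \eqref{PSsyst} show that $\dot X=X(2-(m-1)Y)>0$ and $\dot Z=Z(\sigma+2+(p-m)Y)>0$ for all $\eta>\eta_0$. So both $X$ and $Z$ are strictly increasing and have limits $X_\infty\in(X(\eta_0),\infty]$ and $Z_\infty\in(Z(\eta_0),\infty]$. Since $Y$ is bounded, the argument runs parallel to the proof of Proposition \ref{prop.left}, in three steps.

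\textbf{Step 1: both limits are infinite.} If $X_\infty<\infty$ and $Z_\infty<\infty$, the trajectory stays in a compact set. Its $\omega$-limit set is then nonempty, compact and invariant, and lies in $\{X=X_\infty,Z=Z_\infty\}$ with $X_\infty,Z_\infty>0$. On that set $\dot X=\dot Z=0$, which forces $Y=2/(m-1)$ and $Y=-(\sigma+2)/(p-m)$ at the same time, a contradiction. If exactly one limit is finite, I would use the sequence-of-extrema argument of Proposition \ref{prop.left}. Suppose $X_\infty<\infty=Z_\infty$. Then $\dot X\to0$ along a sequence would force $Y\to2/(m-1)$. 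But $\dot Y=X-(N-2)Y-Z-mY^2+\frac{p-m}{\sigma+2}XY\to-\infty$ because $Z\to\infty$ while the other terms stay bounded. So $Y$ decreases and eventually leaves the slab through the lower plane, which contradicts \eqref{interm22}. The case $Z_\infty<\infty=X_\infty$ is handled symmetrically: I would pass to the chart \eqref{PSinf1}, where $z=Z/X\to0$, and show the trajectory must approach a critical point on $\{x=0,z=0\}$. The candidates are $Q_1$ and $Q_5$, and $Q_5$ is excluded because its stable manifold is the $y$-axis. This leaves $Q_1$, which is one of the claimed outcomes.

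\textbf{Step 2: the chart at infinity.} With $X,Z\to\infty$ and $Y$ bounded, in the chart \eqref{PSinf1} we have $x=1/X\to0$ and $y=Y/X\to0$, and $z=Z/X$ is monotone along the trajectory. Indeed, from \eqref{PSinf1}, $dz/d\eta_1=z[(p-1)y+\sigma x]$, and in $\eta$-time $\dot z=z[(p-1)Y+\sigma]$. I would split according to whether $z$ stays bounded.

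\textbf{Step 3: identifying the limit point.} If $z$ stays bounded along a subsequence, the $\omega$-limit set lies in the invariant segment $\{x=y=0\}$, which is exactly the family $Q_\gamma$ together with $Q_1$ at $z=0$. By Lemma \ref{lem.Q1QgQ5}, the only points of this family reachable from the finite region are $Q_1$ (via its center manifold) and $Q_{\gamma_0}$; every other $Q_\gamma$ has its invariant manifolds inside $\{x=0\}$. A connectedness argument should then show that the $\omega$-limit set is a single point. Here I would use that $z$ is eventually monotone: its derivative has the sign of $(p-1)Y+\sigma$, and $Y$ tends to the value dictated by the limit point. If instead $z\to\infty$, I would use the variable $w=xz$ and Proposition \ref{prop.Q4.bis}. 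Orbits near $\{x=0\}$ with $w>0$ are driven to $Q_3$, which has $Y\to-\infty$ and so violates \eqref{interm22}. If $w\to0$, the trajectory approaches $Q_1'$ and hence $Q_1$ or $Q_4$, and Lemma \ref{lem.Q4} rules out $Q_4$.

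\textbf{Main obstacle.} I expect the hardest part to be the non-hyperbolic situation on the line of critical points $Q_\gamma$. There one must rule out an $\omega$-limit set consisting of a continuum of $Q_\gamma$'s, and rule out a limit at some $Q_\gamma$ with $\gamma\neq\gamma_0$ even though those points carry no manifolds off $\{x=0\}$. I would first try a slow-fast argument: near $\{x=y=0\}$ the equation for $y$ is $\dot y\approx\frac{p-m}{\sigma+2}y+x(1-z)$. A quasi-steady-state for $y$ then gives $y\approx -\frac{\sigma+2}{p-m}x(1-z)$, and the $z$-equation reduces to a one-dimensional flow on the line. Its unique attracting point from $\{x>0\}$ is $z=\kappa_0$, corresponding to $\gamma_0$, which yields the claimed dichotomy.
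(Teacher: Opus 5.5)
Your Step 1 is fine. In the subcase $Z_\infty<\infty=X_\infty$ you do not need to discuss $Q_5$: since $Y$ is bounded, $y=Y/X\to0$, so $(x,y,z)\to(0,0,0)=Q_1$ directly.

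\textbf{The gap.} It is in Steps 2--3, which carry the real content of the proposition; the paper handles this part by the max/min oscillation argument of \cite[Lemma 4.5]{IS25b}. Your claim that $z=Z/X$ is monotone is false. Indeed $\dot z=z[(p-1)Y+\sigma]$ changes sign at $Y=-\sigma/(p-1)$, and this value lies strictly inside the slab \eqref{interm22} for every admissible $(m,p,\sigma)$. It is in fact the limit of $Y$ along the orbit entering $Q_{\gamma_0}$, so even knowing the limit point would not make $z$ eventually monotone there. Your justification, ``$Y$ tends to the value dictated by the limit point'', is circular: convergence to a single point is exactly what must be proved. Connectedness alone does not exclude an $\omega$-limit set that is a whole arc of the equilibrium line $\{Q_\gamma\}$. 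If $z$ is bounded only along a subsequence, it does not exclude an arc reaching $Q_4$ either. Lemma \ref{lem.Q1QgQ5} only describes orbits converging to one $Q_\gamma$, so it cannot be invoked before this is settled.

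\textbf{The slow--fast heuristic.} It can be turned into a proof, but as stated it has the stability backwards. In the chart \eqref{PSinf1}, $\partial_y\dot y\approx\frac{p-m}{\sigma+2}>0$, so the quasi-steady state $y\approx-\frac{\sigma+2}{p-m}x(1-z)$ is repelling and is not reached by relaxation. On it,
$\frac{dz}{d\eta_1}\approx\frac{(p-1)(\sigma+2)}{p-m}\,xz(z-\kappa_0)$, where $\kappa_0=\frac{\sigma(m-1)+2(p-1)}{(p-1)(\sigma+2)}$ is the $z$-coordinate of $Q_{\gamma_0}$. So $z=\kappa_0$ is repelling and $z=0$ is attracting.

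What forces the orbit onto this repelling manifold is the slab condition itself. Set $G:=1-z+\frac{p-m}{\sigma+2}Y$, so that $\dot Y=XG-(N-2)Y-mY^2$ with $X\to\infty$.
\begin{enumerate}
\item \emph{$z$ is eventually bounded.} Suppose $z\ge 1+\frac{2(p-m)}{(m-1)(\sigma+2)}+2\delta$ at a late time. Since $Y<2/(m-1)$, we get $G\le-\delta$ and $\dot Y\le-\delta X/2$ for as long as $z$ stays above the same level with $2\delta$ replaced by $\delta$. This expels $Y$ from the slab within a time $O(1/X)$, during which $\ln z$ moves only $O(1/X)$, a contradiction.
\item \emph{$G\to0$.} Once $z$ is bounded, $\dot G=\frac{p-m}{\sigma+2}XG+O(1)$. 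Hence $|G|\ge\delta$ at a late time persists, and again pushes $Y$ out of the slab.
\item \emph{Identifying the limit.} Now $\dot z=\frac{(p-1)(\sigma+2)}{p-m}\,z\,(z-\kappa_0+G)$ with $G\to0$. If $z>\kappa_0+\varepsilon$ at a late time, then $z\to\infty$, contradicting step 1. Hence either $z\to0$, giving $Q_1$, or $z\to\kappa_0$, giving $Q_{\gamma_0}$.
\end{enumerate}
This argument also makes your separate treatment of $z\to\infty$ via $w=xz$ unnecessary. This shadowing step, or an equivalent oscillation argument, is the missing idea in your proposal.
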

The proof follows very similar arguments as the one of Proposition \ref{prop.left}. However, we omit the details and refer the reader to \cite[Lemma 4.5]{IS25b} where a complete and detailed proof is given. 

\section{Proof of Theorem \ref{th.pSpJL}}\label{sec.pSpJL}

We assume throughout this section that $p_S(\sigma)<p<p_{JL}(\sigma)$. Let us recall here that, in this interval, the restriction of the critical point $P_2$ to the invariant plane $\{X=0\}$ is a stable focus. The proof of Theorem \ref{th.pSpJL} is then based on a strategy of shooting on the unstable manifold of the critical point $P_0$. Indeed, we infer from Proposition \ref{prop.X0} that the unique trajectory contained in the unstable manifold of $P_0$ and in the plane $\{X=0\}$ (denoted by $l_{\infty}$ in the notation of Lemma \ref{lem.P0P1P3}) is a heteroclinic trajectory between $P_0$ and $P_2$, while Proposition \ref{prop.Z0} entails that the unique trajectory contained in the unstable manifold of $P_0$ and in the plane $\{Z=0\}$ (denoted by $l_0$ in the notation of Lemma \ref{lem.P0P1P3}) is a heteroclinic trajectory between $P_0$ and $P_3$. We divide the proof into several steps.

\medskip 

\noindent \textbf{Step 1. Definition of the three sets.} Assume first that the unique trajectory contained in the unstable manifold of $P_3$ (according to Lemma \ref{lem.P0P1P3}) is a heteroclinic one connecting to the stable node $Q_3$. The second equation of the system \eqref{PSsyst} readily gives (by directly replacing $Y=-(\sigma+2)/(p-m)$ in it) that the direction of the flow of the system \eqref{PSsyst} across the plane $\{Y=-(\sigma+2)/(p-m)\}$ points in the negative direction of $Y$ at points with $Z>Z(P_2)$ and in the positive direction of $Y$ at points with $Z<Z(P_2)$. Thus, a trajectory connecting $P_3$ to $Q_3$ must have an odd number of intersections with the plane $\{Y=-(\sigma+2)/(p-m)\}$, denoted by $2j+1$, where $j=0,1,2,...$ is a non-negative integer. Moreover, no trajectory can be tangent to the plane $\{Y=-(\sigma+2)/(p-m)\}$: the only possible tangency points, satisfying $\dot{Y}=0$, have $Z=Z(P_2)$ and thus belong to the trajectory \eqref{estacionariaLine}. By the uniqueness of a trajectory through a non-critical point of the system \eqref{PSsyst}, we infer that such a tangency is not possible.

We next split the trajectories $\{l_k\}_{k\in(0,\infty)}$ contained in the unstable manifold of $P_0$ according to the notation in Lemma \ref{lem.P0P1P3} into the following three sets: 
\begin{equation}\label{sets0}
\begin{split}
	&\mathcal{E}_1=\left\{k\in(0,\infty): l_k \ {\rm crosses} \ Y=-\frac{\sigma+2}{p-m} \ {\rm at \ least} \ 2j+2 \ {\rm times}\right\},\\
	&\mathcal{G}_1=\left\{k\in(0,\infty): l_k \ {\rm crosses} \ Y=-\frac{\sigma+2}{p-m} \ {\rm exactly} \ 2j+1 \ {\rm times \ and \ enters} \ Q_3\right\},\\[2mm]
	&\mathcal{F}_1=(0,\infty)\setminus(\mathcal{E}_1\cup\mathcal{G}_1).
\end{split}
\end{equation}
The definitions of $\mathcal{E}_1$ and $\mathcal{G}_1$, together with the direction of the flow of the system \eqref{PSsyst} across the plane $\{Y=-(\sigma+2)/(p-m)\}$ described in the previous paragraph and the stability of $Q_3$ entail that $\mathcal{E}_1$ and $\mathcal{G}_1$ are open sets. 

\medskip 

\noindent \textbf{Step 2. Existence of a peaking solution.} Since $p\in(p_{S}(\sigma),p_{JL}(\sigma))$, Lemma \ref{lem.P2} ensures that $P_2$ is a stable focus for the reduced system \eqref{systX0}, thus the trajectory connecting $P_0$ to $P_2$ according to Proposition \ref{prop.X0} has infinitely many intersections with the plane $\{Y=-(\sigma+2)/(p-m)\}$. An obvious continuity argument implies that there is $\vartheta>0$ such that the trajectory $l_{k}$ with $k\in(\vartheta,\infty)$ has at least $2j+2$ intersection points with the same plane; that is, $(\vartheta,\infty)\subseteq\mathcal{E}_1$. On the contrary, since the unique trajectory contained in the unstable manifold of $P_3$ has exactly $2j+1$ intersection points with the plane $\{Y=-(\sigma+2)/(p-m)\}$ and then enters $Q_3$, we infer from Proposition \ref{prop.Z0}, the stability of $Q_3$ and the local behavior near a saddle point \cite[Theorem 2.9]{Shilnikov} that there is $\varepsilon>0$ such that the trajectories $l_{k}$ with $k\in(0,\varepsilon)$ remain in a tubular neighborhood of the "composition" of the trajectory $l_0$ followed by the unique trajectory stemming from $P_3$ and thus enter $Q_3$ after intersecting $2j+1$ times the plane $\{Y=-(\sigma+2)/(p-m)\}$; that is, $(0,\varepsilon)\subseteq\mathcal{G}_1$. 

A standard topological argument then gives that $\mathcal{F}_1$ is nonempty. In particular, letting $k_1=\inf\,\mathcal{E}_1$, the openness of both $\mathcal{E}_1$ and $\mathcal{G}_1$ implies that $k_1\in\mathcal{F}_1$. Thus, the trajectory $l_{k_1}$ has at most $2j+1$ points of intersection with the plane $\{Y=-(\sigma+2)/(p-m)\}$. We deduce from Propositions \ref{prop.left} and \ref{prop.ret} that the trajectory $l_{k_1}$ ends at one of the critical points $Q_1$, $Q_3$ or $Q_{\gamma_0}$. In all these cases, by eventually applying \cite[Theorem 5.11]{Shilnikov} (dealing with the local behavior near a non-hyperbolic point with a saddle-type behavior) and Proposition \ref{prop.Q4.bis} when $l_{k_1}$ enters $Q_1$ or $Q_{\gamma_0}$, we deduce that there is $\delta>0$ such that, for any $k\in(k_1,k_1+\delta)$, the trajectory $l_k$ has
	
$\bullet$ either exactly the same number of oscillations with respect to the plane $Y=\{-(\sigma+2)/(p-m)\}$, if $l_{k_1}$ enters $Q_3$,

$\bullet$ or one more oscillation than $l_{k_1}$ if $l_{k_1}$ reaches $Q_{\gamma_0}$ or $Q_1$ from the region $\{Y>-(\sigma+2)/(p-m)\}$,

$\bullet$ or two more oscillations than $l_{k_1}$ if $l_{k_1}$ reaches $Q_1$ from the region  $\{Y<-(\sigma+2)/(p-m)\}$, in which case $l_{k_1}$ is already a trajectory corresponding to a peaking solution.

By the definition of infimum, $(k_1,k_1+\delta)\cap\mathcal{E}_1\neq\emptyset$, hence we readily deduce from the previous argument and the definition of $\mathcal{E}_1$ that $l_{k_1}$ must have exactly $2j+1$ points of intersection with the plane $\{Y=-(\sigma+2)/(p-m)\}$. Consequently, $l_{k_1}$ does not reach the critical point $Q_3$, as $k_1\not\in\mathcal{G}_1$. 

Let $\eta_0\in\real$ be the ``time" of the last intersection point with the plane $\{Y=-(\sigma+2)/(p-m)\}$, that is, $Y(\eta_0)=-(\sigma+2)/(p-m)$ and $Y(\eta)\neq-(\sigma+2)/(p-m)$ for any $\eta>\eta_0$. The odd number of intersection points entails that $Y(\eta)<-(\sigma+2)/(p-m)$ for any $\eta\in(\eta_0,\infty)$. An application of Proposition \ref{prop.left} guarantees that the trajectory $l_{k_1}$ coincides with a trajectory $\mathcal{C}_s$ for some $s\in(0,Z(P_2))$. The existence of a peaking solution is then ensured by Theorem \ref{th.peaking}.

\medskip

\noindent \textbf{Step 3. Towards a second solution.} The multiplicity of solutions follows by an induction similar to the one employed in the proof of \cite[Theorem 1.1]{IS25b}. We just give here the step of deriving the second solution, as the continuation of the inductive argument is then straightforward and left to the reader. We show first that there is $\delta>0$ and $k_2\in(k_1,k_1+\delta)$ such that $k_2\in\mathcal{E}_1$ and the trajectory $l_{k_2}$ reaches the stable node $Q_3$. According to the definition of infimum, the openness of $\mathcal{E}_1$ and the generalized boundary value theorem \cite[Theorem 5.11]{Shilnikov} imply that there exists $\delta>0$ such that $(k_1,k_1+\delta)\subseteq\mathcal{E}_1$ and the trajectories $l_k$ with $k\in(k_1,k_1+\delta)$ reach the stable node $Q_3$. Indeed, we infer from \cite[Theorem 5.11]{Shilnikov} that the trajectories $l_k$ with $k$ very close to $k_1$ remain in a tubular neighborhood of the trajectories lying on the unstable manifold of $Q_1$ contained in the plane $\{x=0\}$. According to Proposition \ref{prop.Q4.bis}, these trajectories add up (with respect to the trajectory $l_{k_1}$) exactly two more oscillations with respect to the plane $\{Y=-(\sigma+2)/(p-m)\}$ (that is, a total of $2j+3$ intersection points) before reaching the attractor $Q_3$. 

\medskip

\noindent \textbf{Step 4. The inductive step and existence of a second peaking solution.} Thus, for a second trajectory corresponding to a peaking solution, we split the open set $\mathcal{E}_1$ into three sets as indicated below:
\begin{equation}\label{sets1}
	\begin{split}
		&\mathcal{E}_2=\left\{k\in\mathcal{E}_1: l_k \ {\rm crosses} \ Y=-\frac{\sigma+2}{p-m} \ {\rm at \ least} \ 2j+4 \ {\rm times}\right\},\\
		&\mathcal{G}_2=\left\{k\in\mathcal{E}_1: l_k \ {\rm crosses} \ Y=-\frac{\sigma+2}{p-m} \ {\rm exactly} \ 2j+3 \ {\rm times \ and \ enters} \ Q_3\right\},\\[2mm]
		&\mathcal{F}_2=\mathcal{E}_1\setminus(\mathcal{E}_2\cup\mathcal{G}_2).
	\end{split}
\end{equation}
It is obvious that $\mathcal{E}_2$ and $\mathcal{G}_2$ are open sets and the arguments in the previous step ensure that $\mathcal{G}_2$ contains an interval $(k_1,k_1+\delta)$ such that the trajectories $l_k$ with $k\in(k_1,k_1+\delta)$ have exactly $2j+3$ oscillations with respect to the plane $\{Y=-(\sigma+2)/(p-m)\}$. Moreover, since $p\in(p_{S}(\sigma),p_{JL}(\sigma))$, we remind the reader that the trace of the critical point $P_2$ in the invariant plane $\{X=0\}$ is a stable focus by Lemma \ref{lem.P2} and thus the trajectory $l_0$ has infinitely many intersection points with the plane $\{Y=-(\sigma+2)/(p-m)\}$. An argument of continuity implies then that $\mathcal{E}_2$ contains an interval of the form $(\varepsilon,\infty)$. In particular, both $\mathcal{E}_2$ and $\mathcal{G}_2$ are non-empty and open sets and, since $\mathcal{E}_1$ is also open, it follows that there exists $k_2\in\mathcal{F}_2$. Similar arguments as at the end of Step 2 then guarantee that $l_{k_2}$ is a trajectory corresponding to a profile (after undoing the transformation \eqref{PSchange}) of a peaking solution. This inductive construction can be then extended infinitely many times in an obvious way (by splitting next the open set $\mathcal{E}_2$ as above), concluding the proof under the assumption that the unique trajectory in the unstable manifold of $P_3$ reaches $Q_3$. 

\medskip 

\noindent \textbf{Step 5. Removing the initial assumption.} We are left with the case when the trajectory in the unstable manifold of $P_3$ does not reach the critical point $Q_3$. If the trajectory has only a finite number of oscillations with respect to the plane $\{Y=-(\sigma+2)/(p-m)\}$, one can readily conclude that there is $\eta_0\in\real$ such that, for $\eta>\eta_0$, either \eqref{interm22} or \eqref{condpeak} holds true and thus the trajectory reaches either one of the critical points $Q_1$ or $Q_{\gamma_0}$ according to Propositions \ref{prop.left} and \ref{prop.ret}. In both cases, it follows exactly as in Step 3 from the generalized boundary value problem \cite[Theorem 5.11]{Shilnikov} and Proposition \ref{prop.Q4.bis} that there is $\varepsilon_0>0$ such that, for any $k\in(0,\varepsilon_0)$, the trajectory $l_k$ reaches the critical point $Q_3$. One can then apply the Steps 1-4 by splitting into three sets the interval $(\varepsilon_0/2,\infty)$ instead of $(0,\infty)$ to complete the proof.

\medskip 

\noindent \textbf{Step 6. Infinitely many oscillations.} On the contrary, if the trajectory oscillates infinitely many times with respect to the plane $\{Y=-(\sigma+2)/(p-m)\}$, then there are sequences $\{\eta_j^m\}_{j\geq1}$ and $\{\eta_j^M\}_{j\geq1}$ of maxima and minima of the mapping $\eta\mapsto Y(\eta)$ along the trajectory such that
$$
\lim\limits_{j\to\infty}\eta_j^m=\lim\limits_{j\to\infty}\eta_j^M=\infty, \quad Y(\eta_j^m)<-\frac{\sigma+2}{p-m}<Y(\eta_j^M), \quad j\geq1.
$$
Assume first that there is $j_0\geq1$ such that $Y(\eta_j^M)<2/(m-1)$ for any $j\geq j_0$. We infer that $\eta\mapsto X(\eta)$ is an increasing mapping along the trajectory for $\eta\in(\eta_j^M,\infty)$ and thus there exists $X_{\infty}=\lim\limits_{\eta\to\infty}X(\eta)$. If $X_{\infty}<\infty$, it follows that the $\omega$-limit set is either a critical point or a trajectory of the system \eqref{PSsyst} contained in the plane $\{X=X_{\infty}\}$ according to \cite[Theorem 2 and Corollary, Section 3.2]{Pe}. But the latter is impossible, as follows from the analysis performed in Section \ref{sec.local}. We thus deduce that $X_{\infty}=\infty$ and, by passing to the system \eqref{PSinf1} in variables $(x,y,z)$, we can easily deduce that the trajectory reaches a critical point contained in the invariant plane $\{x=0\}$ of the system \eqref{PSinf1}, as that plane does not admit periodic orbits according to Proposition \ref{prop.Q4.bis}. 

We are thus left with the case when (after eventually extracting subsequences, not relabeled) 
\begin{equation}\label{osc}
Y(\eta_j^m)<-\frac{\sigma+2}{p-m}<\frac{2}{m-1}<Y(\eta_j^M), \quad j\geq1.
\end{equation}
We show next that this situation is not possible. Fixing $\eta_0\in\real$, we infer by integration from the first and third equations of the system \eqref{PSsyst} that 
\begin{equation}\label{limits}
\begin{split}
&\frac{\ln X(\eta)-\ln X(\eta_0)}{\eta-\eta_0}=2-\frac{m-1}{\eta-\eta_0}\int_{\eta_0}^{\eta}Y(s)\,ds, \\ &\frac{\ln Z(\eta)-\ln Z(\eta_0)}{\eta-\eta_0}=\sigma+2+\frac{p-m}{\eta-\eta_0}\int_{\eta_0}^{\eta}Y(s)\,ds.
\end{split}
\end{equation}
Assume for contradiction that $X(\eta)$ is bounded along the trajectory. We then pass to the limit as $\eta\to\infty$ to obtain
$$
\lim\limits_{\eta\to\infty}\frac{1}{\eta-\eta_0}\int_{\eta_0}^{\eta}Y(s)\,ds=\frac{2}{m-1},
$$
which also gives, after letting $\eta\to\infty$ in the second equation in \eqref{limits}, that 
$$
\lim\limits_{\eta\to\infty}\frac{\ln Z(\eta)-\ln Z(\eta_0)}{\eta-\eta_0}=\sigma+2+\frac{2(p-m)}{m-1}=\frac{\sigma(m-1)+2(p-1)}{m-1}>0.
$$
The latter implies that $Z(\eta)\to\infty$ as $\eta\to\infty$, which is a contradiction with the infinite oscillation \eqref{osc} around the plane $\{Y=-(\sigma+2)/(p-m)\}$ (which implies that there are subsequences on which $\eta\mapsto Z(\eta)$ decreases). We have thus proved that $X(\eta)$ is unbounded. Passing to the variables $(x,y,z)$ of the system \eqref{PSinf1}, we deduce that the $\omega$-limit set of the trajectory contains points in the invariant plane $\{x=0\}$ and, by the invariance of this plane and the fact that the $\omega$-limit set is itself a critical point or a trajectory of the system (see \cite[Section 3.2]{Pe}), we conclude that the $\omega$-limit set of the trajectory is contained in the invariant plane $\{x=0\}$. The stability of the critical points $Q_3$ and $Q_2$ gives that $|y|=|Y|/X$ remains bounded along the trajectory. But then the condition \eqref{osc} and the invariance of the $z$-axis in the plane $\{x=0\}$ ensures that the $\omega$-limit set must be contained in the $z$-axis. We then deduce from Lemma \ref{lem.Q1QgQ5} and an argument of oscillations with respect to $z$ given in full detail in \cite[Lemma 4.5]{IS25b} that the limit is necessarily a critical point (either $Q_1$ or $Q_{\gamma_0}$) and we conclude as in Step 5.

A visual presentation of this proof throughout numerical experiments is given in Figure \ref{fig1} in Section \ref{sec.num} below.

\section{The main estimates}\label{sec.calc}

This section contains the main analytical ingredient in the proof of Theorems \ref{th.sigmasmall} and \ref{th.sigmalarge}. The strategy is to identify sharp parameter ranges for which the solutions of a suitable linearized equation have at least three zeros. As it will become clear from the subsequent phase-space analysis, these zeros correspond to crossings of the plane $\{Z=Z(P_2)\}$ by trajectories approaching the critical point $Q_1$. This information will ultimately allow us to construct backward self-similar profiles satisfying \eqref{beh.inf}.

The calculations below are inspired by the approach introduced by Lepin \cite{L89,L90} and later adapted to the quasilinear setting by Galaktionov and V\'azquez \cite[Section 12]{GV97} in the derivation of the critical exponents \eqref{pL}. Let us mention here that some of the subsequent calculations have been performed with the assistance of the symbolic computing Maple software.

Since our aim is to look for profiles exhibiting the asymptotic behavior \eqref{beh.inf} as $\xi\to\infty$, we perform suitable transformations with the goal of transforming the tail behavior \eqref{beh.inf} to a constant (but positive) limit in the new variables and, at the same time, to map the stationary solution \eqref{estacionaria} into a constant solution to the transformed equation. We start from \eqref{backward.SS} and introduce the following changes of variable: in a first step set
\begin{equation*}
	h(\xi)=f^m(\xi),
\end{equation*}
which transforms \eqref{ODE.backward} into
\begin{equation}\label{ODE.h}
	h''(\xi)+\dfrac{N-1}{\xi}h'(\xi)-\alpha h^{1/m}(\xi)-\beta\xi\left(h^{1/m}\right)'(\xi)+\xi^\sigma h^{p/m}(\xi)=0.
\end{equation}
Next, we perform the transformation
\begin{equation*}
	h(\xi)=\xi^{-m(\sigma+2)/(p-m)}g(\xi),\qquad \xi=e^s, \qquad s\in\mathbb{R}.
\end{equation*}
A straightforward computation yields
\begin{equation*}
	\begin{split}
		h'(\xi)=&\ \xi^{-m(\sigma+2)/(p-m)}g'(\xi)-\dfrac{m(\sigma+2)}{p-m}\xi^{-m(\sigma+2)/(p-m)-1}g(\xi),\\[2mm]
		h''(\xi)=&\ \xi^{-m(\sigma+2)/(p-m)}g''(\xi)-\dfrac{2m(\sigma+2)}{p-m}\xi^{-m(\sigma+2)/(p-m)-1}g'(\xi)\\
		&+\dfrac{m(\sigma+2)}{p-m}\left(\dfrac{m(\sigma+2)}{p-m}+1\right)\xi^{-m(\sigma+2)/(p-m)-2}g(\xi).
	\end{split}
\end{equation*}
Substituting these expressions into \eqref{ODE.h}, we obtain
\begin{equation*}
	\begin{split}
		\xi^2g''(\xi)&+\left(N-1-\dfrac{2m(\sigma+2)}{p-m}\right)\xi g'(\xi)-\dfrac{m(\sigma+2)}{p-m}\left(N-2-\dfrac{m(\sigma+2)}{p-m}\right)g(\xi)\\[2mm]
		&-\dfrac{\beta}{m}\xi^{1/\beta+1}g^{1/m-1}(\xi)g'(\xi)+g^{p/m}(\xi)=0.
	\end{split}
\end{equation*}
In terms of the variable $s=\ln\xi$, this equation becomes
\begin{equation}\label{ODE.g}
	g''(s)+Ag'(s)-\dfrac{m}{p-m}Bg(s)+g^{p/m}(s)-\dfrac{\beta}{m}e^{s/\beta}g^{1/m-1}(s)g'(s)=0,
\end{equation}
where
\begin{equation}\label{AB}
	A=N-2-\dfrac{2m(\sigma+2)}{p-m},\qquad B=(\sigma+2)\left(N-2-\dfrac{m(\sigma+2)}{p-m}\right).
\end{equation}
Furthermore, the stationary profile \eqref{estacionaria} corresponds to the constant solution $g\equiv C(\sigma)$. We therefore linearize \eqref{ODE.g} around this equilibrium by writing
\begin{equation*}
	g(s)=C(\sigma)+y(s).
\end{equation*}
The resulting linearized equation is
\begin{equation}\label{ylinear}
	y''(s)+Ay'(s)+By(s)-\dfrac{\beta}{m}C^{1/m-1}(\sigma)e^{s/\beta}y'(s)=0.
\end{equation}
Equation \eqref{ylinear} falls within the framework studied by Lepin \cite{L89}. In particular, if
\begin{equation*}
	\dfrac{2(j+1)}{\beta}<A-\sqrt{A^2-4B}\leq\dfrac{2(j+2)}{\beta}, \quad j=0,1,2,...
\end{equation*}
then the solution satisfying $y(s)\to1$ as $s\to\infty$ has exactly
$j+2$ zeros. By linearity, the same conclusion holds for any solution
satisfying $y(s)\to K$ with $K>0$.

Since our subsequent phase-space argument requires (as we shall see in the next section) at least three crossings of the plane $\{Z=Z(P_2)\}$, we are naturally led to seek solutions of \eqref{ylinear} having at least three zeros. This corresponds to the case $j=1$, which gives
\begin{equation}\label{condAB}
	A-\sqrt{A^2-4B}-\dfrac{4}{\beta}>0.
\end{equation}
The remainder of this subsection is devoted to translating condition \eqref{condAB} into explicit restrictions on the parameters of Eq. \eqref{eq1} and is divided into several steps and cases.

\medskip

\noindent \textbf{Step 1. We need $N>10$.} Observe first that the condition $p\geq p_{JL}(\sigma)$ is equivalent to $A^2-4B\geq0$, with equality attained exactly when $p=p_{JL}(\sigma)$. Assume from now on that $p\geq p_{JL}(\sigma)$ and $\sigma<\sigma_1(N)$, since the opposite case has been already solved in Section \ref{sec.pSpJL} and, by \eqref{pJL}, if $\sigma\geq\sigma_1(N)$ then $p_{JL}(\sigma)=\infty$. Observing that 
\begin{equation}\label{funcp}
A-\frac{4}{\beta}=-\frac{f(p)}{p-m}, \quad f(p):=-(N-10)p+(N+6\sigma+2)m-4\sigma-8,
\end{equation}
we notice that $f(p)$ is a linear function of $p$ which is increasing if $N<10$ and decreasing if $N>10$. Since 
$$
f(p_S(\sigma))=\frac{4(\sigma+2)[N(m-1)+2(m+1)]}{N-2}>0,
$$
we infer that $f(p)>0$ for any $p>p_S(\sigma)$ if $N\leq10$, that is, $A-4/\beta<0$ if $p>p_S(\sigma)$ and $N\leq10$. Thus, if furthermore $p\geq p_{JL}(\sigma)$, the condition \eqref{condAB} is never fulfilled. We conclude that, in order to have at least three zeros, we need also to assume that $N>10$. From now on, we introduce the following critical exponent 
\begin{equation}\label{eq.p1}
	p_1:=\frac{m(N+6\sigma+2)-4(\sigma+2)}{N-10},
\end{equation}
which is the unique zero of the function $f$ defined in \eqref{funcp}. We observe that 
$$
p_1-p_S(\sigma)=\frac{4(\sigma+2)[N(m-1)+2(m+1)]}{(N-2)(N-10)}>0,
$$
whence $p_1>p_S(\sigma)$. Moreover, since $N>10$, the function $f$ is decreasing with respect to $p$, hence $f(p)>0$ if $p<p_1$ and $f(p)<0$ if $p>p_1$.

\medskip 

\noindent \textbf{Step 2. Comparison between $p_{JL}(\sigma)$ and $p_1$.} We deduce from the previous step that we next have to compare next $p_1$ with $p_{JL}(\sigma)$. To this end, we introduce the function
\begin{equation*}
	\begin{split}
		f_{JL}(p)&:=(A^2-4B)(p-m)^2=(N-2)(N-10-4\sigma)p^2\\
		&-2(N^2-2(\sigma+4)N-2\sigma^2-4\sigma+4)mp+m^2(N-2)^2,
	\end{split}
\end{equation*}
which obviously satisfies $f_{JL}(p_{JL}(\sigma))=0$. Since $\sigma<\sigma_1(N)$ according to Remark \eqref{rem.estacionaria}, the graph of $f_{JL}$ is a parabola with positive dominating coefficient. Since
$$
f(p_S(\sigma))=-\frac{8m^2(\sigma+2)^2}{N-2}<0
$$ 
and $p_S(\sigma)<p_{JL}(\sigma)$, we deduce that $p_{JL}(\sigma)$ is the largest zero of $f_{JL}$. Thus, in order to compare $p_1$ with $p_{JL}(\sigma)$, we have to study the sign of $f_{JL}(p_1)$, recalling that $p_1$ is defined in \eqref{eq.p1}. A direct calculation shows that 
$$
f_{JL}(p_1)=\frac{8(\sigma+2)^2}{(N-10)^2}Q(m),
$$
where
\begin{equation}\label{polQ}
Q(m):=(2N^2-15N\sigma-22N+6\sigma+20)(m-1)^2-8\sigma(N+2)(m-1)-N\sigma-2N-6\sigma+20.
\end{equation}
Let us recall the exponents $\sigma^*(N)$ and $\sigma_*(N)$ defined in \eqref{sigmastar}. We now observe the role played by them in the analysis: $\sigma^*(N)$ is the zero of the dominating coefficient of the polynomial $Q(m)$ introduced in \eqref{polQ}, while $\sigma_*(N)$ is the root (with respect to $\sigma$) of the zero order term in $Q(m)$ (which is equal to $Q(1)$). Denoting by 
$$
D(\sigma):=2N^2-15N\sigma-22N+6\sigma+20
$$
the dominating coefficient of $Q(m)$, we observe that $$
D(0)=2(N-10)(N-1)>0, \quad D(\sigma^*(N))=0, \quad D(\sigma_1(N))=-\frac{(N-10)(7N+2)}{4}<0,
$$
whence $0<\sigma^*(N)<\sigma_1(N)$, since $D$ is a linearly decreasing function of $\sigma$. It is obvious that $-2<\sigma_*(N)<0$. We split then the analysis of the sign of $Q(m)$ into several cases:

\medskip 

$\bullet$ if $\sigma\in(\sigma^*(N),\sigma_1(N))$, then $D(\sigma)<0$, hence all the coefficients of $Q(m)$ are negative (since $\sigma>0$ in this case) and thus $Q(m)<0$ for any $m>1$. Consequently, $f_{JL}(p_1)<0$ and, by basic properties of parabolas, we conclude that $p_1<p_{JL}(\sigma)$ in this case.

$\bullet$ if $-2<\sigma<\sigma_*(N)$, then $D(\sigma)>0$ and $Q(1)>0$. In this case, a direct calculation shows that the polynomial $Q(m)$ has two different real roots given exactly by the expressions of $m_1>1$ and $m_2>1$ defined in \eqref{m12}. It then follows that $Q(m)<0$ for $m_1<m<m_2$ and $Q(m)>0$ for $m\in(1,m_1)\cup(m_2,\infty)$.

$\bullet$ if $\sigma_*(N)\leq\sigma\leq\sigma^*(N)$ then $Q(1)<0$ and thus $m_1<1<m_2$. In this case, $Q(m)<0$ for $m\in(1,m_2)$ and $Q(m)>0$ for $m>m_2$.

\medskip 

Putting together the previous cases and recalling the expression of $f_{JL}$ and its properties, we conclude this step with the following alternative:
\begin{equation}\label{compp1pJL}
	\max\{p_1,p_{JL}(\sigma)\}=\left\{\begin{array}{ll}p_1, & {\rm if} \ -2<\sigma<\sigma_*(N) \ {\rm and} \ m\in(1,m_1)\cup(m_2,\infty),\\
	p_{JL}(\sigma), & {\rm if} \ -2<\sigma<\sigma_*(N) \ {\rm and} \ m\in[m_1,m_2],\\
	p_1, & {\rm if} \ \sigma\in[\sigma_*(N),\sigma^*(N)] \ {\rm and} \ m\in(m_2,\infty),\\
	p_{JL}(\sigma), & {\rm if} \ \sigma\in[\sigma_*(N),\sigma^*(N)] \ {\rm and} \ m\in(1,m_2],\\
	p_{JL}(\sigma), & {\rm if} \ \sigma\in(\sigma^*(N),\sigma_1(N)) \ {\rm and} \ m\in(1,\infty). \end{array}\right.
\end{equation}
Let us observe that, whenever $p\in[p_{JL}(\sigma),p_1)$ (an interval well defined only in a few cases according to \eqref{compp1pJL}), the condition \eqref{condAB} is not fulfilled and thus one can only guarantee the existence of at most two zeros of the solutions $y(s)$ to \eqref{ylinear}.

\medskip 

\noindent \textbf{Step 3.} Let us assume from now on that $p\geq\max\{p_1,p_{JL}(\sigma)\}$. In this case, we have $A-4/\beta\geq0$ and we can check the condition \eqref{condAB} by taking squares in order to remove the square root. We then get by direct calculation that 
\begin{equation}\label{comp0}
A^2-4B-\left(A-\frac{4}{\beta}\right)^2=\frac{4}{(p-m)^2}R(p),
\end{equation}
where 
\begin{equation*}
	\begin{split}
R(p)&:=[2(N-10)-\sigma(N-2)](p-m)^2+(\sigma+2)[m(2N+\sigma-26)-2(N-10)](p-m)\\
&-4(2m-1)(m-1)(\sigma+2)^2.
	\end{split}
\end{equation*}
It is obvious that $R(p_{JL}(\sigma))<0$ by definition. Setting $P:=(p-m)/(\sigma+2)$, we easily observe that 
\begin{equation}\label{polR}
\begin{split}
	\frac{R(p)}{(\sigma+2)^2}&=\overline{R}(P):=[2(N-10)-\sigma(N-2)]P^2\\&+[m(2N+\sigma-26)-2(N-10)]P
	-4(2m-1)(m-1).
\end{split}
\end{equation}
The remaining part of the analysis is focused on the sign of $R(p)$ and its simplified version $\overline{R}(p)$ for critical values of $p$, allowing to deduce the sign of the expression in \eqref{comp0} and thus the intervals where \eqref{condAB} is fulfilled.

\medskip 

\noindent \textbf{Step 4. The case $\sigma<\sigma_2(N)$.} In this case, the dominating coefficient of the polynomial $R$ is positive and, since $R(p_{JL}(\sigma))<0$, the polynomial $R$ has two real roots (and $p_{JL}(\sigma)$ lies between them). A simple calculation ensures that the largest root is given exactly by the exponent $p_L(\sigma)=p_L^{-}(\sigma)$ defined in \eqref{pL}. Then, $R(p)<0$ for $p\in(p_{JL}(\sigma),p_L(\sigma))$ and the condition \eqref{condAB} is satisfied whenever $p>p_1$ and $p\in(p_{JL}(\sigma),p_{L}(\sigma))$. Moreover, 
\begin{equation}\label{Rp}
R(p_1)=\frac{(p-m)^2}{4}(A^2-4B)>0.
\end{equation}
Since $p_L(\sigma)$ is the largest root of $R$ and $R(p_{JL}(\sigma))<0$, we deduce that $p_1>p_L(\sigma)$ whenever $p_1>p_{JL}(\sigma)$. Hence, whenever $p_1>p_{JL}(\sigma)$, the condition \eqref{condAB} is not fulfilled for any $p>p_{JL}(\sigma)$ and thus the solution $y$ to \eqref{ylinear} such that $y(s)\to K$ as $s\to\infty$ has only two zeros. 

Before completing the analysis of this case, we still have to compare $\sigma^*(N)$ with $\sigma_2(N)$. We thus have 
$$
\sigma_2(N)-\sigma^*(N)=-\frac{2(N-10)(N^2-18N+8)}{3(N-2)(5N-2)},
$$
which is non-negative for $10<N\leq9+\sqrt{73}$ and is negative for $N>9+\sqrt{73}$. In terms of integer dimensions, we find 
\begin{equation}\label{compsigma}
\min\{\sigma^*(N),\sigma_2(N)\}=\left\{\begin{array}{ll}\sigma^*(N), & {\rm if} \ 10<N<18,\\
\sigma_2(N), & {\rm if} \ N\geq18.\end{array}\right.
\end{equation}
We conclude from the analysis performed in Steps 2, 3 and 4 that the condition \eqref{condAB} is fulfilled if one of the following conditions is satisfied

$\bullet$ $\sigma\in(-2,\sigma_*(N))$ and $m\in[m_1,m_2]$,

$\bullet$ $11\leq N\leq 17$, $\sigma\in[\sigma_*(N),\sigma^*(N)]$ and $m\in(1,m_2]$, 

$\bullet$ $11\leq N\leq 17$, $\sigma\in(\sigma^*(N),\sigma_2(N))$ and $m>1$.

$\bullet$ $N\geq18$, $\sigma\in(\sigma_2(N),\sigma^*(N))$ and $m\in(1,m_2]$,

\noindent and in all these cases for $p\in[p_{JL}(\sigma),p_L(\sigma))$. We have thus derived the intervals given in Theorem \ref{th.sigmasmall}.

\medskip 

\noindent \textbf{Step 5. The range $\sigma_2(N)\leq\sigma<\sigma_1(N)$.} This case is qualitatively simpler, although technically more involved. We observe that the dominating coefficient of the polynomial $R$ is now negative, thus the graph of $R(p)$ is a parabola with negative quadratic coefficient. Noticing that, on the one hand $R(p_1)>0$ and $R(p_{JL}(\sigma))<0$ (exactly as in the previous case), and, on the other hand, the condition \eqref{condAB} is fulfilled whenever $R(p)<0$ and $p\geq p_{JL}(\sigma)$, we deduce by repeating the arguments of the previous step that the condition \eqref{condAB} is satisfied for 
\begin{equation}\label{lowerb}
p>\max\{p_1,p_{JL}(\sigma),p_L^+(\sigma)\},
\end{equation}
since $p_L^{+}(\sigma)$ is the largest root of $R(p)$ (when such root exists). Moreover, if $p_1>p_{JL}(\sigma)$, then $R(p_1)>0$ and, by standard properties of second degree polynomials, $p_{JL}(\sigma)<p_L(\sigma)<p_{L}^+(\sigma)$ and $p_1\in(p_L(\sigma),p_L^+(\sigma))$. We have just shown that $p_1$ is never the maximum in \eqref{lowerb}, reaching thus the interval claimed in Theorem \ref{th.sigmalarge}. 

\medskip

As mentioned in the Introduction, due to its technical complication, the detailed comparison between the two critical exponents $p_{JL}(\sigma)$ and $p_L^+(\sigma)$ will be given in the Appendix.

\section{Proof of Theorems \ref{th.sigmasmall} and \ref{th.sigmalarge}}\label{sec.proof}

The results derived so far provide all the ingredients needed to prove Theorems \ref{th.sigmasmall} and \ref{th.sigmalarge}. We recall that some parts of the argument are inspired by the proof of \cite[Theorem 1.1]{IMS26} and, at some points, we will only sketch the argument or quote the adequate reference.

\begin{proof}[Proof of Theorems \ref{th.sigmasmall} and \ref{th.sigmalarge}] Assume $p>\max\{p_{JL}(\sigma),p_L^{+}(\sigma)\}$ or $p_L(\sigma)>p>p_{JL}(\sigma)$ and that $m$, $p$, $\sigma$ belong to the ranges stated in Theorems \ref{th.sigmasmall} and \ref{th.sigmalarge}. We have shown in Section \ref{sec.calc} that, in all these cases, any solution $y(s)$ to the linearized equation \eqref{ylinear} satisfying $y(s)\to K>0$ as $s\to\infty$ has at least three zeros.
	
The proof relies on a backward shooting argument in the phase space associated with the system \eqref{PSsyst}, inspired by the approach of \cite[Theorem 12.2]{GV97} and adapted to the present framework following \cite[Section 3]{IMS26}. As explained in Section \ref{sec.prep}, we consider the family of trajectories $\mathcal C_k$, with $k\in(0,Z(P_2))$, contained in the center manifold of the critical point $Q_1$. We recall that $Z(P_2)$ is given by \eqref{ZP2-XP3} and that the family $\{\mathcal C_k\}_{k\in(0,Z(P_2))}$ is introduced in Remark \ref{rem.Ck}. A key observation is that the number of zeros of the function $y(s)$ solution to \eqref{ylinear} coincides with the number of intersections between the graph of a solution $g(s)$ to \eqref{ODE.g} lying in a suitable neighborhood of $C(\sigma)$ and the constant solution $C(\sigma)$. Equivalently, this quantity gives a lower bound of the number of intersections between a profile $f(\xi)$ solving \eqref{ODE.backward} and the stationary solution \eqref{estacionaria}. We also recall that the phase-space variables $(X,Z)$ are in one-to-one correspondence with the variables $(f,\xi)$ through the transformation \eqref{PSchange}. More precisely, solving \eqref{PSchange} for $f$ and $\xi$, we obtain
\begin{equation*}
	f=\left(\dfrac{m}{\alpha}X\right)^{-\alpha}(mZ)^{2/L},
	\qquad
	\xi=
	\sqrt{\left(\dfrac{m}{\alpha}X\right)^{1-\alpha(m-1)}}
	\,(mZ)^{(m-1)/L}.
\end{equation*}
Consequently, an intersection between a solution of \eqref{ODE.backward} and the stationary profile \eqref{estacionaria} corresponds to a point in phase space having the same coordinates $(X,Z)$ as the equilibrium point $P_2$. Since $Z(P_2)$ is fixed whereas $X$ may take arbitrary positive values, such intersections are represented in the phase space of \eqref{PSsyst} by crossings of the plane $\{Z=Z(P_2)\}$, where $Z(P_2)$ is given by \eqref{ZP2-XP3}. Therefore, the number of intersections of a solution $g(s)$ to \eqref{ODE.g} with the constant $g\equiv C(\sigma)$ coincides with the number of times the corresponding trajectory crosses the plane $\{Z=Z(P_2)\}$. To determine the local behavior of the flow across this plane, we compute the scalar product of the vector field associated with \eqref{PSsyst} and the normal vector $(0,0,1)$. This yields
\begin{equation*}
	H(Y)=Z(P_2)\bigl(\sigma+2+(p-m)Y\bigr),
	\qquad Z(P_2)>0.
\end{equation*}
Hence, the flow crosses the plane $\{Z=Z(P_2)\}$ in the direction of increasing $Z$ whenever $Y>-(\sigma+2)/(p-m)$, and in the opposite direction whenever $Y<-(\sigma+2)/(p-m)$. Recalling the analysis of the linearized equation \eqref{ylinear}, we deduce that whenever the condition \eqref{condAB} holds, there exists $\delta>0$ such that every trajectory $\mathcal C_k$ with $k\in\bigl(Z(P_2)-\delta,Z(P_2)+\delta\bigr)$ crosses the plane $\{Z=Z(P_2)\}$ at least three times. Equivalently, the corresponding solutions of \eqref{ODE.g} intersect the constant equilibrium $g\equiv C(\sigma)$ at least three times. We next introduce the following three sets:
\begin{equation*}
	\begin{split}
		&\mathcal{U}:=\{k\in(0,Z(P_2)): \mathcal{C}_k \ {\rm crosses} \ \{Z=Z(P_2)\}\ {\rm exactly \ twice \ and \ starts \ from} \ Q_2\},\\[2mm]
		&\mathcal{W}:=\{k\in(0,Z(P_2)): \mathcal{C}_k \ {\rm crosses} \ \{Z=Z(P_2)\} \ {\rm at \ least \ three \ times}\},\\[2mm]
		&\mathcal{V}:=(0,Z(P_2))\backslash\left(\mathcal{U}\cup\mathcal{W}\right).
	\end{split}
\end{equation*}	
We first observe that no trajectory $\mathcal{C}_k$ with $k\neq Z(P_2)$ can be tangent to the plane $\{Z=Z(P_2)\}$. Indeed, a tangency may only occur at a point where the flux across this plane vanishes, that is, when $Y=-(\sigma+2)(p-m)$ together with the condition $Z=Z(P_2)$. But these two equalities characterize precisely the trajectory \eqref{estacionariaLine}, which corresponds to $\mathcal{C}_{Z(P_2)}$. Since these points are not  equilibrium points of the system, the uniqueness of solutions to the system \eqref{PSsyst} implies that no other trajectory (in particular no trajectory $\mathcal{C}_k$ with a different value of $k$) can pass through it. It follows immediately from the continuous dependence on the parameter $k$ and from the stability of $Q_2$ established in Lemma \ref{lem.Q23} that both sets $\mathcal{U}$ and $\mathcal{W}$ are open. 

Moreover, Proposition \ref{prop.X0} shows that the unique trajectory entering $P_1$ on its stable manifold originates from $Q_2$, while Proposition \ref{prop.Z0} ensures that the unique trajectory entering $Q_1$ and contained in the plane $\{Z=0\}$ (that is, the trajectory $\mathcal{C}_0$ in the notation of Remark \ref{rem.Ck}) starts from $P_1$. Therefore, a standard continuity argument, together with the local behavior near a (hyperbolic) saddle point (see \cite[Theorem 2.9]{Shilnikov}), yield the existence of $\vartheta>0$ such that every trajectory $\mathcal{C}_k$ with $k\in(0,\vartheta)$ starts from $Q_2$ and remains in a tubular neighborhood of the union of trajectories $Q_2-P_1$ and $P_1-Q_1$. Consequently, for every $k\in(0,\vartheta)$, the trajectory $\mathcal{C}_k$ intersects the plane $\{Z=Z(P_2)\}$ exactly twice, at points close to the corresponding intersections of the unique orbit $Q_2-P_1$ contained in the plane $\{X=0\}$. We have just proved that $(0,\vartheta)\subseteq\mathcal{U}$.

On the opposite side, since $\mathcal{C}_{Z(P_2)}$ coincides with the line \eqref{estacionariaLine}, belonging to the plane $\{Z=Z(P_2)\}$, it follows from the analysis performed in Section \ref{sec.calc} that there exists $\varepsilon\in(0,Z(P_2))$ such that, for any $k\in(Z(P_2)-\varepsilon,Z(P_2))$, the trajectory $\mathcal{C}_{k}$ remains in a suitable tubular neighborhood of the line \eqref{estacionariaLine} and thus intersects at least three times the plane $\{Z=Z(P_2)\}$ (corresponding to the three zeros of the function $y$ solution to \eqref{ylinear}). That is, $(Z(P_2)-\varepsilon,Z(P_2))\subseteq\mathcal{W}$.
	
A standard connectedness argument then shows that $\mathcal V\neq\emptyset$. Therefore, there exists at least one parameter $k_1\in\mathcal V$. By definition of $\mathcal V$, the corresponding trajectory $\mathcal C_{k_1}$ cannot cross the plane $\{Z=Z(P_2)\}$ more than twice, for otherwise $k_1\in\mathcal W$. Moreover, if it crosses this plane exactly twice, then its $\alpha$-limit set cannot be $Q_2$, since otherwise $k_1\in\mathcal U$. We also observe that $\mathcal C_{k_1}$ cannot originate from the critical point $P_2$ in view of Lemma \ref{lem.P2}. Furthermore, $\mathcal C_{k_1}$ cannot be tangent to the plane $\{Z=Z(P_2)\}$, as previously discussed. We next show that $\mathcal C_{k_1}$ crosses the plane $\{Z=Z(P_2)\}$ exactly twice. Assume first by contradiction that $\mathcal C_{k_1}$ never intersects this plane, so that it remains entirely in the region $\{Z<Z(P_2)\}$. Since $k_1<Z(P_2)$, the expansion \eqref{centerQ1} together with the inverse transformation \eqref{change2} yields
\begin{equation*}
	Y=-\dfrac{\sigma+2}{p-m}+\dfrac{\sigma+2}{(p-m)X}\bigl(Z-Z(P_2)\bigr)+o\left(\dfrac1{X^2}\right)<-\dfrac{\sigma+2}{p-m},
\end{equation*}
showing that $\mathcal C_{k_1}$ enters $Q_1$ from the half-space $\{Y<-(\sigma+2)/(p-m)\}$. On the other hand, the flow across the plane $Y=-(\sigma+2)/(p-m)$ points towards the region $\{Y>-(\sigma+2)/(p-m)\}$ when $Z<Z(P_2)$ and towards $\{Y<-(\sigma+2)/(p-m)\}$ when $Z>Z(P_2)$. Since the trajectory remains in $\{Z<Z(P_2)\}$, we infer that it stays entirely in the region $\{Y<-(\sigma+2)/(p-m)\}$. In this region, $Z$ is strictly decreasing whereas $X$ is strictly increasing along the flow. The monotonicity of both $X$ and $Z$ easily excludes oscillatory behavior and implies that the $\alpha$-limit set of $\mathcal C_{k_1}$ must reduce to a critical point. The only critical point contained in the region $\{Y<-(\sigma+2)/(p-m)\}$ is $P_1$, but Lemma \ref{lem.P0P1P3} shows that the trajectories originating from $P_1$ are fully contained in the invariant plane $\{Z=0\}$, yielding a contradiction. A completely analogous argument excludes the possibility that $\mathcal C_{k_1}$ intersects the plane $\{Z=Z(P_2)\}$ exactly once. Indeed, in that case the trajectory would eventually remain in the region $\{Z>Z(P_2)\}$, whose only critical point is $Q_4$. This contradicts Lemma \ref{lem.Q4}, which excludes $Q_4$ as a possible $\alpha$-limit set of $\mathcal C_{k_1}$.

We are thus left with the only remaining possibility that the trajectory $\mathcal C_{k_1}$ crosses the plane $\{Z=Z(P_2)\}$ exactly twice. Taking into account the direction of the flow across the plane $\{Y=-(\sigma+2)/(p-m)\}$, we deduce that the $\alpha$-limit set of $\mathcal{C}_{k_1}$ is contained in the region
\begin{equation*}
	\mathcal D:=
	\left\{
	(X,Y,Z)\in\mathbb R^3:
	X\geq0,\ 
	Y>-\dfrac{\sigma+2}{p-m},\
	0\leq Z<Z(P_2)
	\right\},
\end{equation*}
and is different from the critical point $Q_2$. Moreover, throughout the region $\mathcal D$, both $X$ and $Z$ are strictly increasing with respect to $\eta$. Therefore, an oscillation argument analogous to that employed in the proof of \cite[Lemma 2.9]{IMS26}, based on the assumption of the existence of infinite sequences of maxima and minima, excludes the existence of nontrivial limit sets and shows that the $\alpha$-limit of $\mathcal C_{k_1}$ must reduce to a critical point. Since $Q_2$ has already been excluded, we conclude that the trajectory $\mathcal C_{k_1}$ is a heteroclinic connection from one of the critical points $P_0$, $P_3$ or $Q_5$ to $Q_1$. The corresponding profile satisfies \eqref{beh.inf}, completing the proof.
\end{proof}
Some numerical experiments plotting trajectories reaching $Q_1$ in different ranges of parameters are given in the next Section \ref{sec.num}.

\medskip

\noindent \textbf{Remark.} We are not able to distinguish the starting point of the trajectories between the critical points $P_0$, $P_3$ and $Q_5$. However, any of the local behaviors corresponding to these points gives rise to a peaking self-similar solution. Indeed, on the one hand, the profiles of these solutions satisfy either $f(0)>0$ and $f'(0)=0$ or $f(\xi_0)=0$, $(f^m)'(\xi_0)=0$ for some $\xi_0\geq0$, which is the typical contact condition at the edge of the support for solutions to the porous medium equation. On the other hand, as shown in Theorem \ref{th.peaking}, only the behavior of the profile as $\xi\to\infty$ is relevant for the peaking property (and $k\in(0,Z(P_2))$ implies that the trajectory $\mathcal{C}_k$ corresponds to a profile satisfying \eqref{beh.inf}), while the local behavior near the origin does not make any difference in the limit. Thus, such solutions are peaking self-similar solutions independently of the initial point of the corresponding trajectory. 

\section{Numerical experiments}\label{sec.num}

We gather in this section an outcome of several numerical experiments confirming (and emphasizing) the results of Theorems \ref{th.pSpJL}, \ref{th.sigmasmall} and \ref{th.sigmalarge}. Moreover, the reader can consider the following plots as a visual presentation of the shootings employed in the technical proofs given in Sections \ref{sec.pSpJL} and \ref{sec.proof}. We split the experiments into three ranges.

\medskip 

\noindent \textbf{A. The range $p_S(\sigma)<p<p_{JL}(\sigma)$}. We give in Figure \ref{fig1} the result of a numerical experiment pointing out the shooting from $P_0$ on its unstable manifold. In all the three subfigures, a zoom on the local behavior near the plane $\{Y=-(\sigma+2)/(p-m)\}$ is given, in order to show the very thin line between the behavior of a bunch of trajectories and the behavior of another bunch presenting locally one more crossing of the plane. This is exactly how the proof of Theorem \ref{th.pSpJL} is constructed, see Section \ref{sec.pSpJL}. Moreover, let us remark that the second plot suggest that, between the two peaking solutions, a trajectory entering $Q_1$ from the region $\{Y>-(\sigma+2)/(p-m)\}$ (and thus a self-similar solution presenting the tail behavior \eqref{beh.inf} with $K>C(\sigma)$) is expected to exist. The experiments are done with $N=12$, $\sigma=0.1$, $m=2$, $p=3$, noting that $p_S(\sigma)=2.84$ and $p_{JL}(\sigma)\approx9.724$.

\begin{figure}[ht!]
	\begin{center}
		\subfigure[One and three crossings]{\includegraphics[width=7.5cm,height=6.7cm]{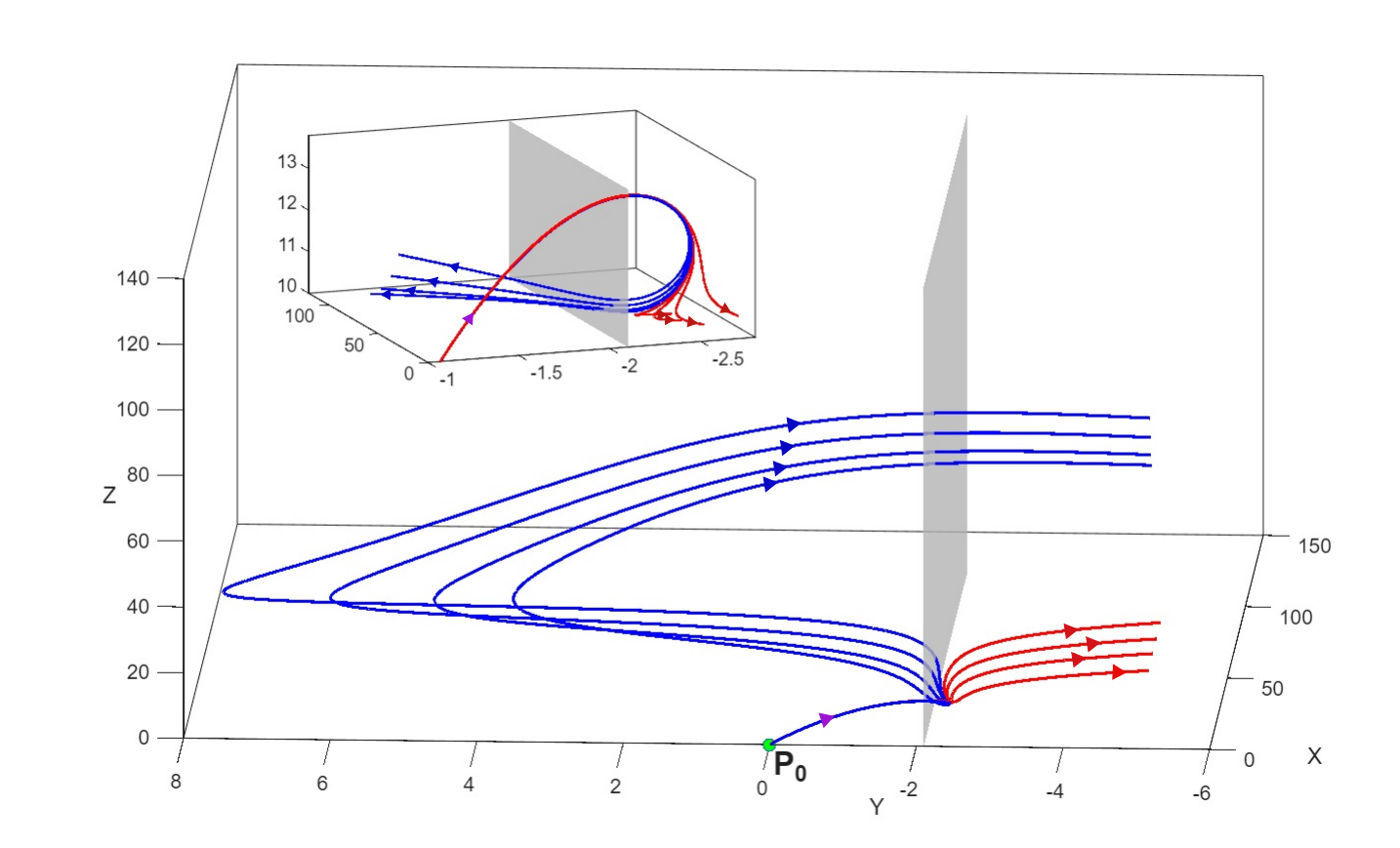}}
		\subfigure[Three and three crossings]{\includegraphics[width=7.5cm,height=6.7cm]{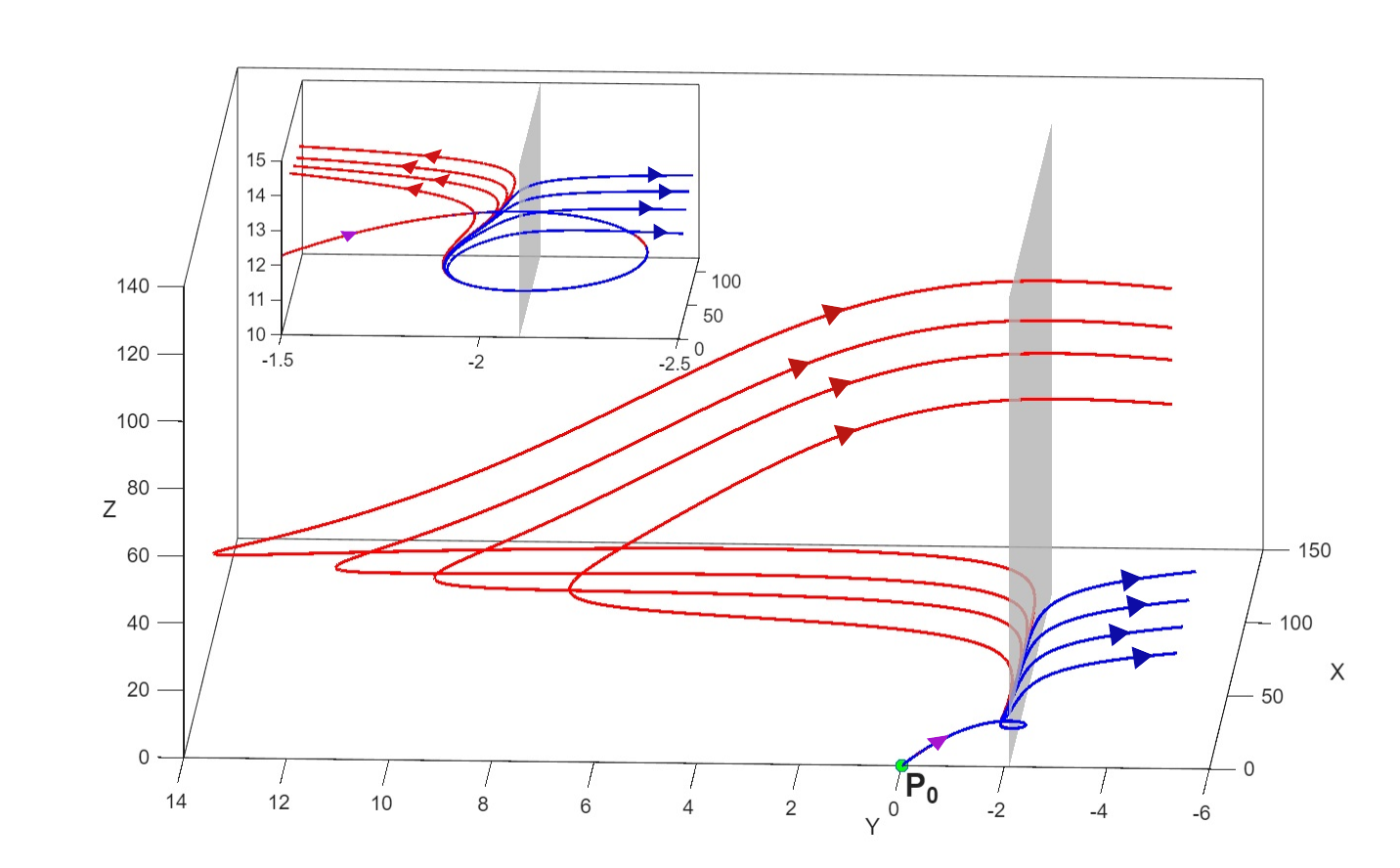}}
		\subfigure[Three and five crossings]{\includegraphics[width=7.5cm,height=6.7cm]{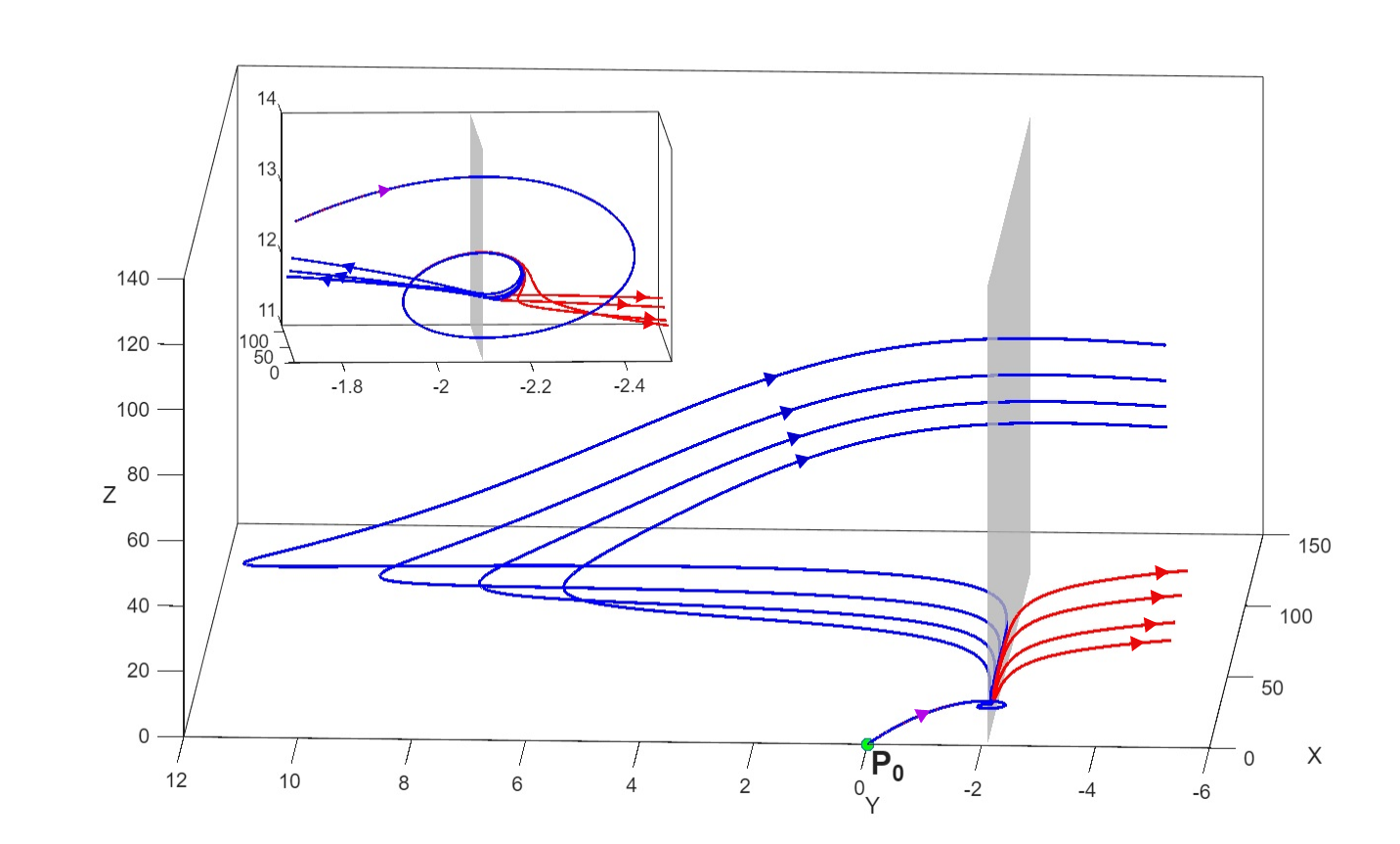}}
	\end{center}
	\caption{Orbits from $P_0$ with different number of oscillations, for $p\in(p_S(\sigma),p_{JL}(\sigma))$}\label{fig1}
\end{figure} 

\medskip 

\noindent \textbf{B. The range $\sigma<\sigma_2(N)$}. In this range we plot in Figure \ref{fig2} the outcome of two experiments, in both cases taking $\sigma=0$ and $N=20$. The experiments are made by shooting backward from $Q_1$, exactly as in the proof of Theorem \ref{th.sigmasmall} given in Section \ref{sec.proof}. For the first plot we took $m=1.1\in(1,m_2)$, since $m_2\approx1.22$, and $p=1.71\in(p_{JL}(0),p_L(0))$, since $p_{JL}(0)\approx1.7042$ and $p_{L}(0)\approx1.7158$. We note a difference in the number of oscillations in the first subfigure, suggesting the existence of a peaking solution, as stated in Theorem \ref{th.sigmasmall}. For the second plot we took $m=2>m_2$ and $p=3.15$ (which belongs to the interval $(p_{JL}(0),p_L(0))\approxeq(3.0985,3.2)$) and all the trajectories behave exactly in the same way, suggesting that all them connect $Q_1$ to $Q_2$ and no peaking solutions exist.

\begin{figure}[ht!]
	\begin{center}
		\subfigure[The peaking range $m\in(1,m_2)$ and $p<p_L(\sigma)$]{\includegraphics[width=7.5cm,height=6.4cm]{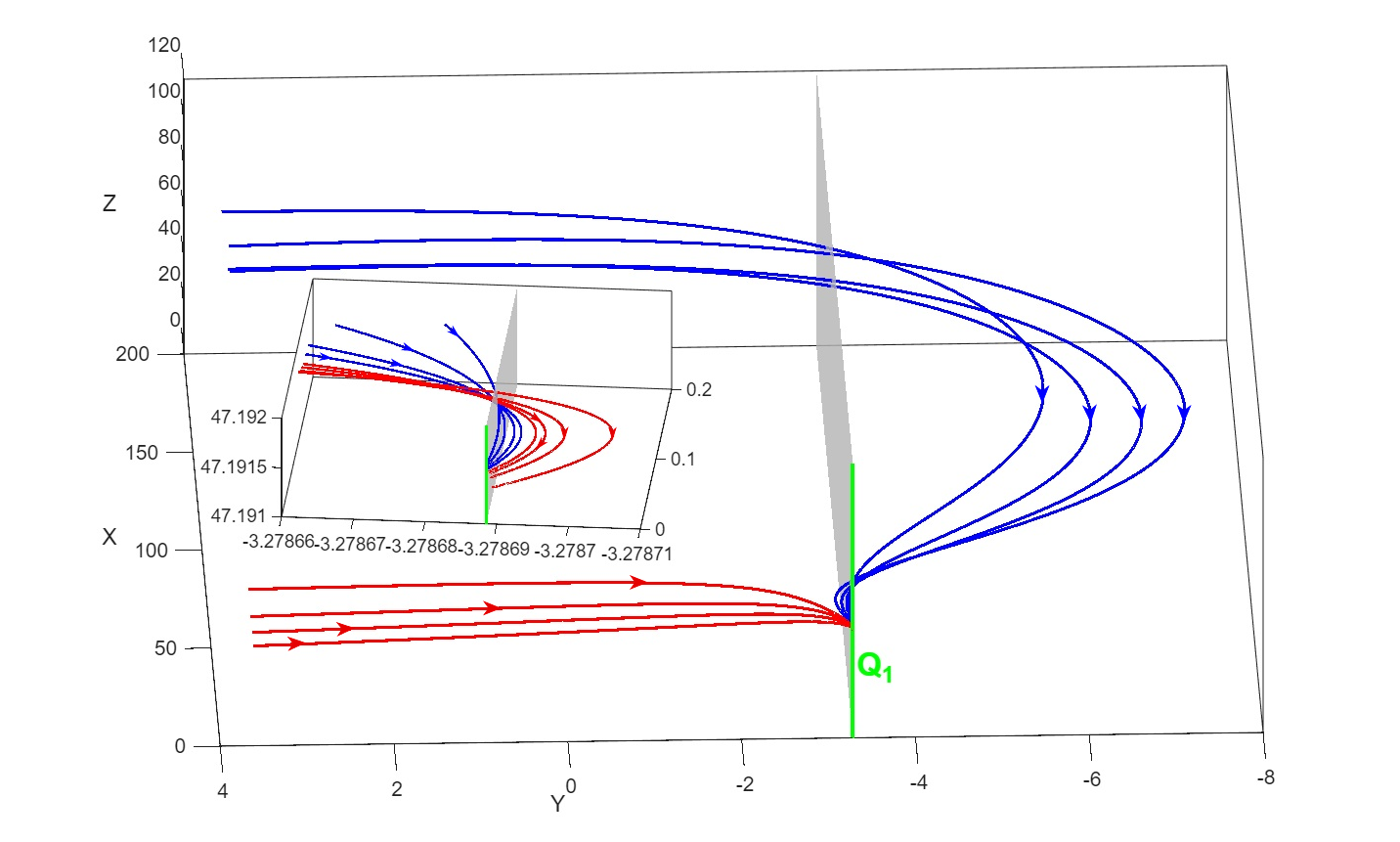}}
		\subfigure[The ``non-peaking" range $m>m_2$]{\includegraphics[width=7.5cm,height=6.4cm]{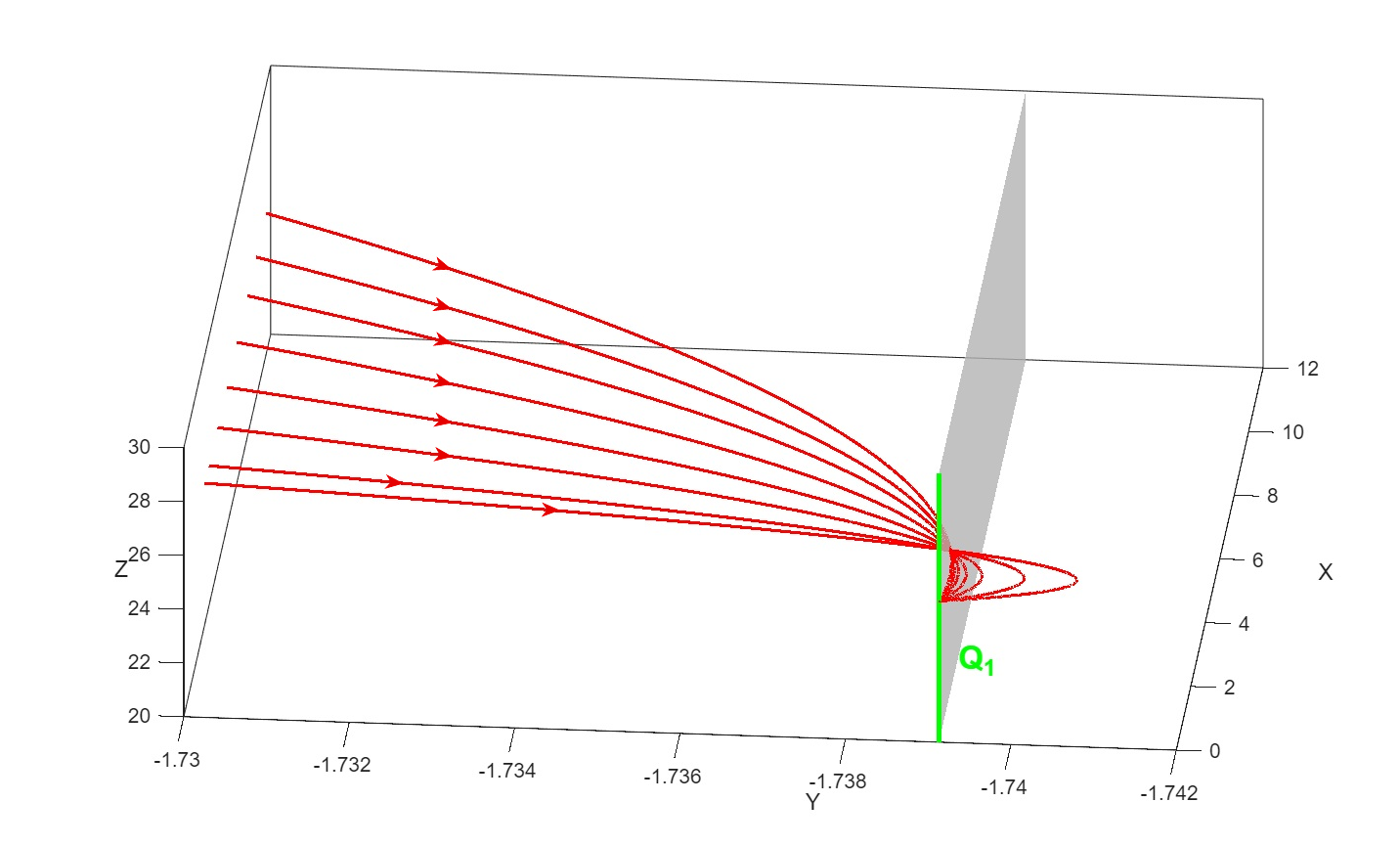}}
	\end{center}
	\caption{Experiments suggesting existence and non-existence of peaking solutions for $\sigma<\sigma_2(N)$}\label{fig2}
\end{figure}  

\medskip 

\noindent \textbf{C. The range $\sigma\in(\sigma_2(N),\sigma_1(N))$}. According to the statement of Theorem \ref{th.sigmalarge}, the discussion after its statement and the proof in the Appendix, this range is split into two subranges according to the relative position of $\sigma$ with respect to the critical value $\sigma_{12}(N)$ defined in \eqref{s12}. We consider first the range $\sigma\in(\sigma_2(N),\sigma_{12}(N))$ and provide in Figure \ref{fig3} a numerical evidence, as well as a visual understanding, of the probably most unexpected phenomenon related to this range, the probable (up to a proof of the non-existence) occurence of a ``gap" in the existence of peaking solutions for $p\in(p_L(\sigma),p_L^+(\sigma))$, as explained in the Remark following Corollary \ref{cor.always}. We observe the different behavior of the orbits in the first and third subplots, but no difference at all in the second one. For these experiments we let $N=20$, $m=3$, $\sigma=1.3$, noting that $\sigma>\sigma^*(20)=1.292$ but $\sigma<\sigma_{12}(20)=1.529$. Taking into account that, with these data, $p_L(\sigma)\approx10.10$, $p_L^+(\sigma)\approx21.034$, we have taken for the three experiments $p=9.5$, $p=15$ and $p=25$ respectively.

\begin{figure}[ht!]
	\begin{center}
		\subfigure[Existence of peaking, $p\in(p_{JL}(\sigma),p_L(\sigma))$]{\includegraphics[width=7.5cm,height=5.5cm]{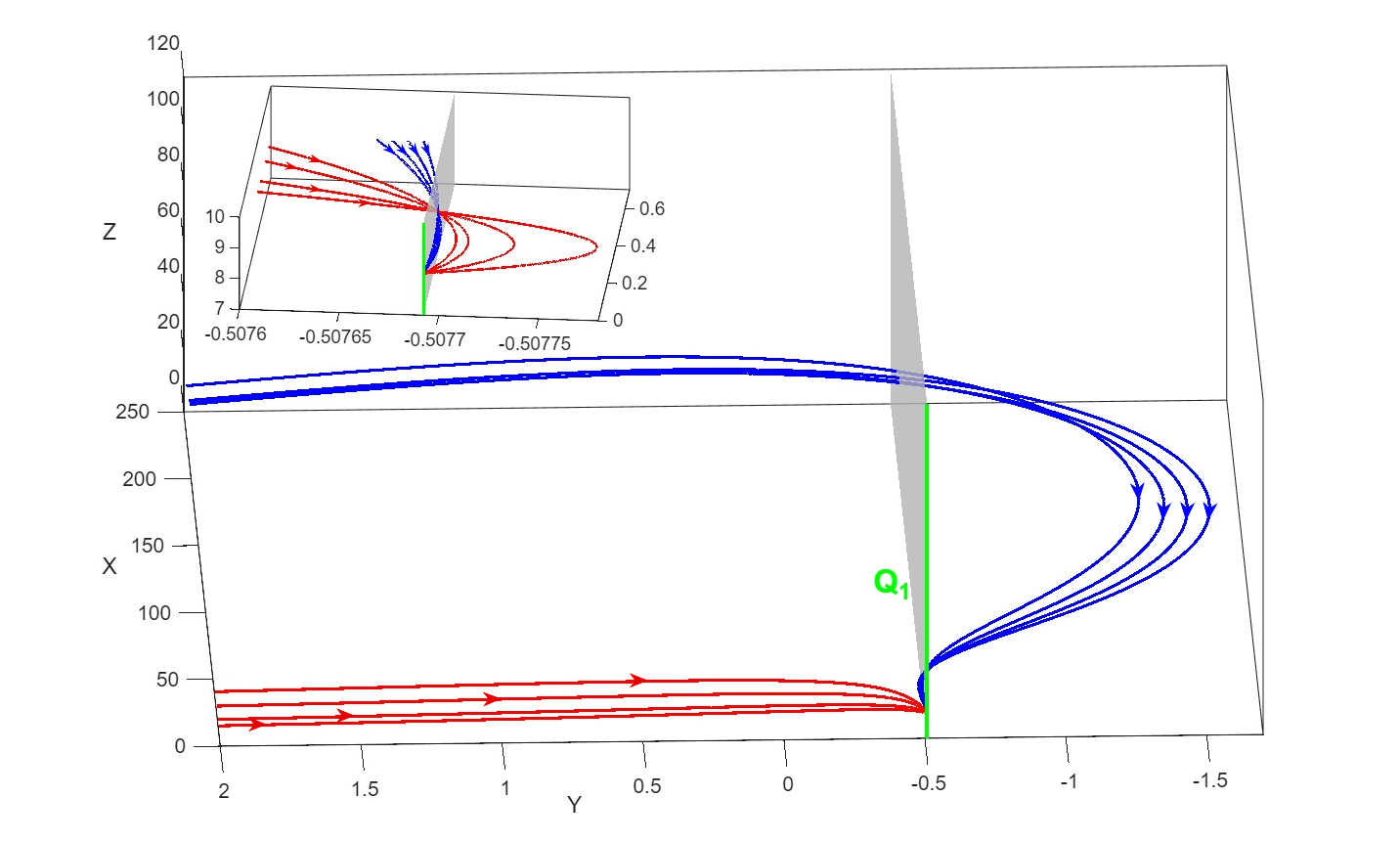}}
		\subfigure[The ``non-peaking" range $p\in(p_L(\sigma),p_L^+(\sigma))$]{\includegraphics[width=7.5cm,height=5.5cm]{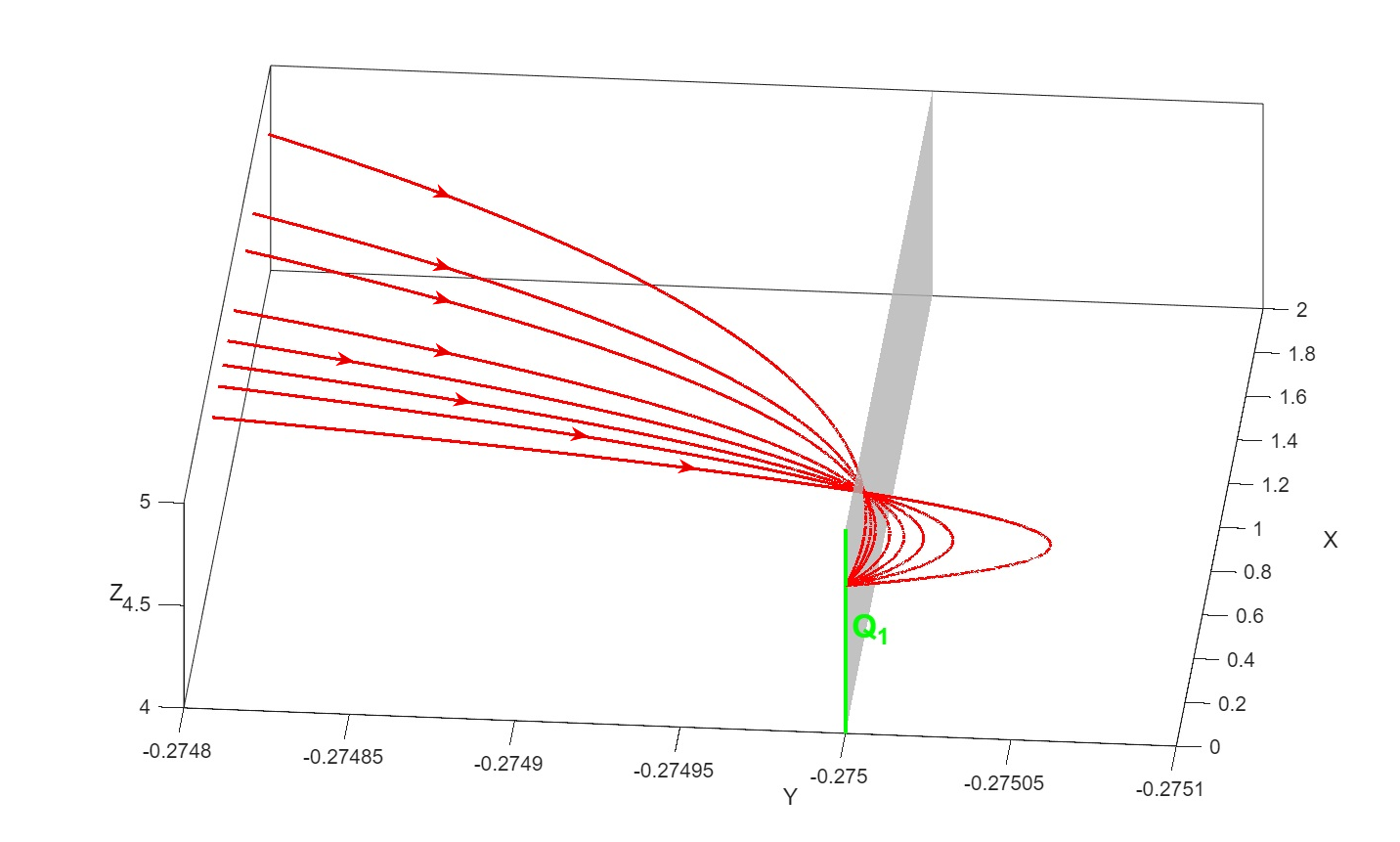}}
		\subfigure[Existence of peaking, $p>p_L^+(\sigma)$]{\includegraphics[width=7.5cm,height=5.5cm]{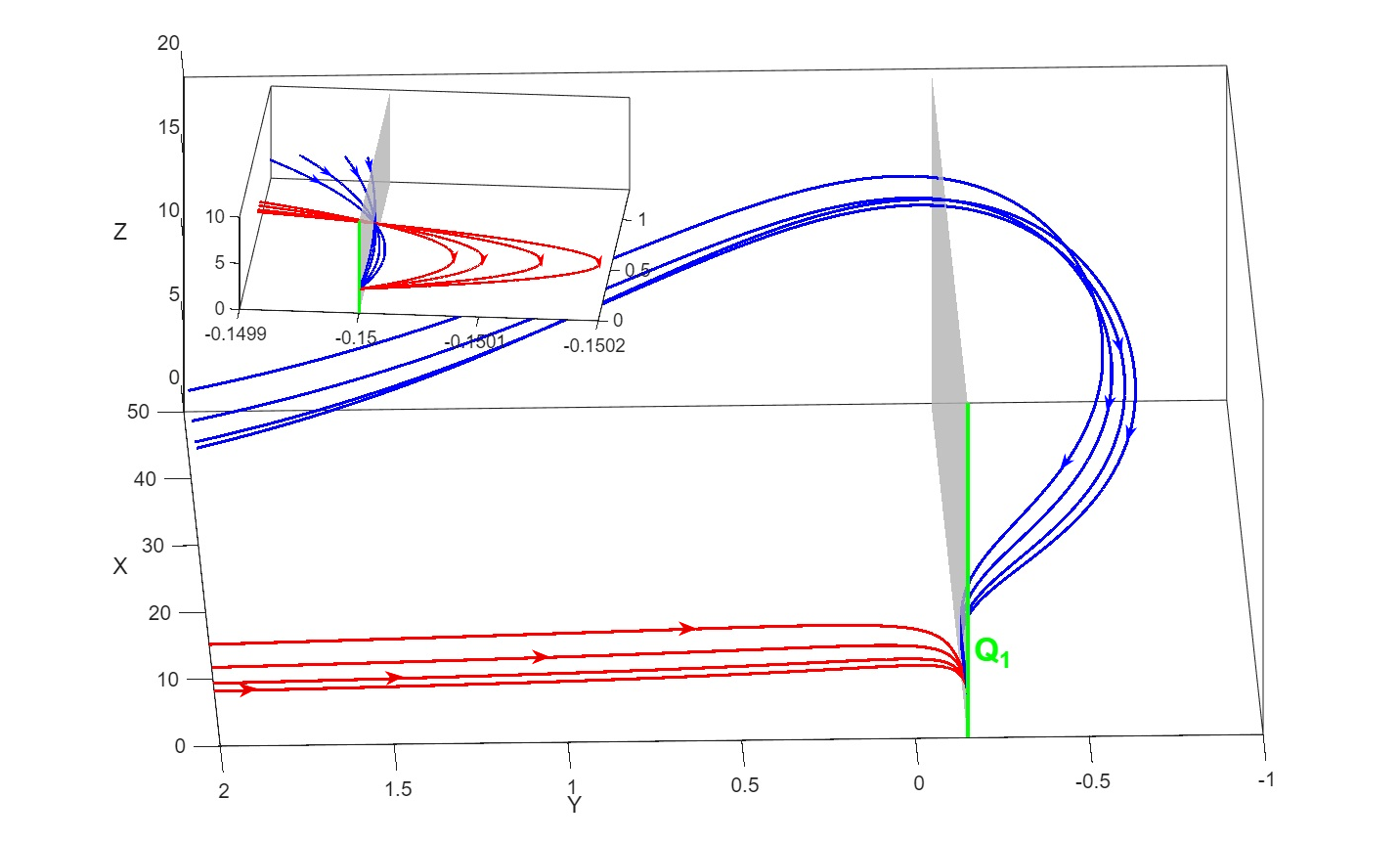}}
	\end{center}
	\caption{Trajectories reaching $Q_1$ for $\sigma\in(\sigma_2(N),\sigma_{12}(N))$}\label{fig3}
\end{figure}

Finally, we exemplify in Figure \ref{fig4} the existence of peaking solutions in the range $\sigma\in(\sigma_{12}(N),\sigma_1(N))$, with $N=20$, $m=3$, $\sigma=2$ and $p=15$, noting that $\sigma_{12}(N)\approx1.529$ and $\sigma_1(N)=2.5$.

\begin{figure}[ht!]
	\begin{center}
		\includegraphics[width=7.5cm,height=5.5cm]{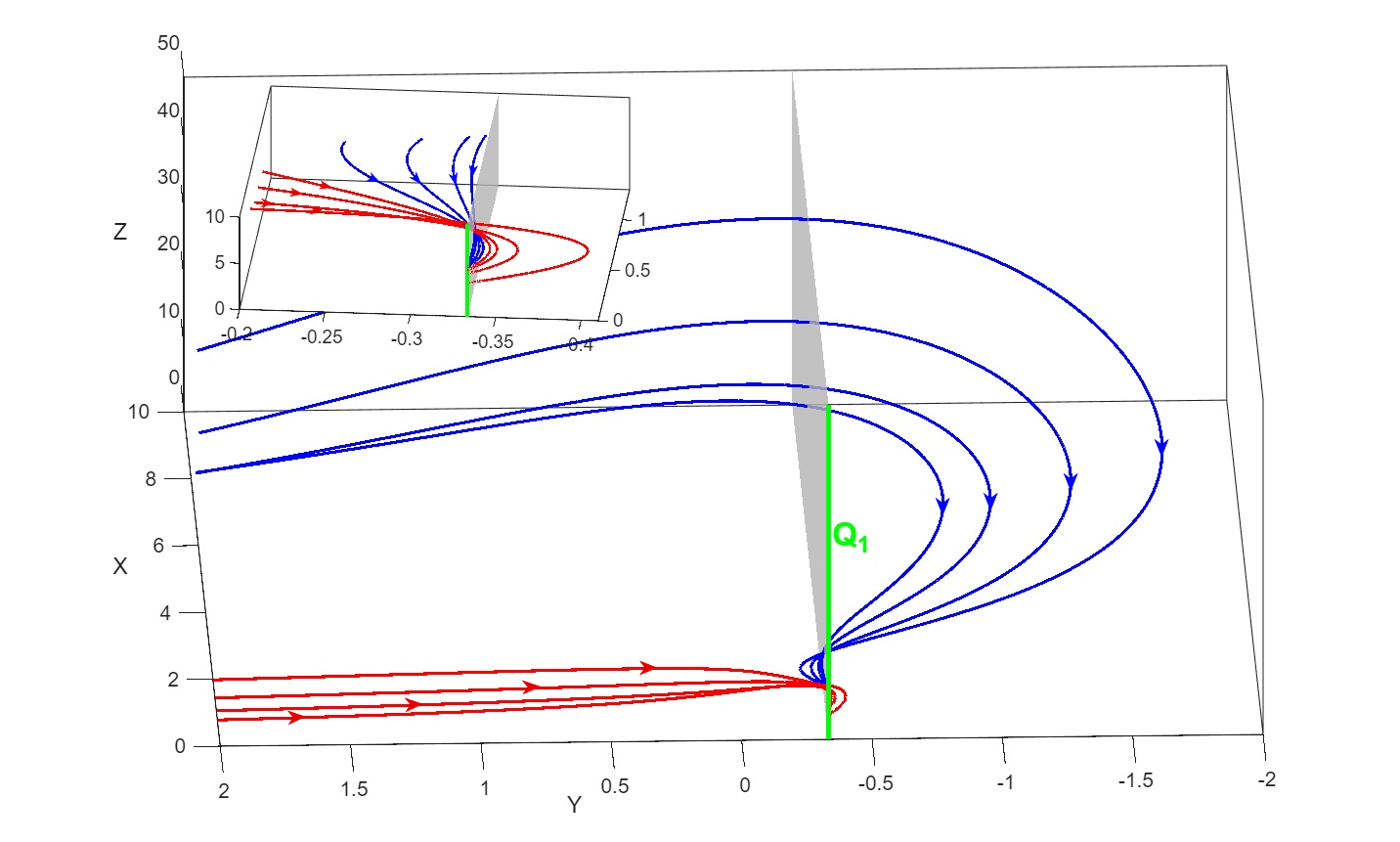}
	\end{center}
	\caption{Existence of peaking solutions for $p\geq p_{JL}(\sigma)$ for $\sigma>\sigma_{12}(N)$.}\label{fig4}
\end{figure}

\section*{Appendix. Completing the proof of Theorem \ref{th.sigmalarge}}
\appendix
\renewcommand{\theequation}{A.\arabic{equation}}

In this appendix, we complete the analysis performed in Section \ref{sec.calc} by proving the two specialized items of Theorem \ref{th.sigmalarge}. Let us thus assume $\sigma_2(N)\leq\sigma<\sigma_1(N)$ and recall the polynomial $R$ introduced in \eqref{comp0} and its rescaled version $\overline{R}$ defined in \eqref{polR}. We split the analysis into several steps. 

\medskip

\noindent \textbf{Step 1. Dimensions $N\in\{11,12\}$.} For $\sigma\geq\sigma_2(N)$, the dominating coefficient of $\overline{R}$ is negative or zero (being zero if and only if $\sigma=\sigma_2(N)$), hence its graph is a concave parabola. We also know that $R(p_{JL}(\sigma))<0$ and $R(p_1)>0$ if $p_1>p_{JL}(\sigma)$, by the definition of $R(p)$ in \eqref{comp0}. It thus follows (by contradiction) that, if $R(p)<0$ for any $p>p_{JL}(\sigma)$, then automatically $p_1<p_{JL}(\sigma)$. It is obvious that the first and third monomials of $R(p)$ are non-positive and we are left to analyze the coefficient of $P$ in the expression of $\overline{R}(P)$, that is, 
$$
S(m,N,\sigma):=(2N+\sigma-26)m-2(N-10).
$$
Let $\sigma_0(N):=-2(N-13)$ be the zero of the coefficient of $m$ in $S(m,N,\sigma)$. We have
$$
\sigma_1(N)-\sigma_0(N)=\frac{9N-104}{4}\leq0,
$$
provided $N\leq 38/3$, which includes the integer dimensions $N\in\{11,12\}$. Thus, if $10<N\leq38/3$, both terms of $S(m,N,\sigma)$ are negative for $\sigma\in(\sigma_2(N),\sigma_1(N))$ and thus 
$$
S(m,N,\sigma)<0, \quad \sigma_2(N)\leq \sigma<\sigma_1(N), \quad 10<N\leq\frac{38}{3}.
$$
Consequently, $\overline{R}(P)<0$ and $R(p)<0$. We thus infer that $p_1<p_{JL}(\sigma)$ and the root $p_L(\sigma)$ does not exist, completing the proof.

\medskip

\noindent \textbf{Step 2. Some new critical exponents.} Assume from now on that $N\geq13$ or, if taking $N$ as a real parameter in \eqref{ODE.backward}, that $N>38/3$. We observe that, whenever $R(p)<0$ for any $p>p_{JL}(\sigma)$, the condition \eqref{condAB} is satisfied for any $p>p_{JL}(\sigma)$. Taking into account that $R(p)$ (or its rescaled simplified version $\overline{R}(P)$) is a second degree polynomial with negative dominating coefficient, this property is ensured when the discriminant of this polynomial is negative. A direct calculation gives that this discriminant is 
\begin{equation}\label{a1}
	\begin{split}
		&\Delta(m,\sigma,N):=D_1(\sigma,N)(m-1)^2+D_2(\sigma,N)(m-1)+(\sigma-6)^2,\\
		&D_1(\sigma,N):=4N^2-(28\sigma+40)N+(\sigma+6)^2,\\
		&D_2(\sigma,N):=8(N-1)-(12N+32)\sigma+2\sigma^2.
	\end{split}
\end{equation}
Let us observe that $D_1(\sigma,N)$ has two roots in terms of $\sigma$, denoted by 
\begin{equation}\label{a2}
	\sigma_{12}(N):=14N-6-8\sqrt{3N^2-2N}, \quad \sigma_{12}^+(N):=14N-6+8\sqrt{3N^2-2N},
\end{equation}
the first of them being already introduced in \eqref{s12}. Taking into account that $N\geq13$, we find that
$$
D_1(\sigma_2(N),N)=\frac{4(N^2-14N+16)^2}{(N-2)^2}>0, \quad D_1(\sigma_1(N),N)=-\frac{47}{16}N^2+\frac{127}{4}N+\frac{49}{4}<0,
$$
hence, by elementary properties of parabolas, 
$$
\sigma_2(N)<\sigma_{12}(N)<\sigma_1(N)<\sigma_{12}^+(N).
$$
Recalling the exponent $\sigma^*(N)$ defined in \eqref{sigmastar}, another direct calculation leads to 
$$
D_1(\sigma^*(N),N)=\frac{64(N+2)^2(N-1)^2}{9(5N-2)^2}>0,
$$
which proves that $\sigma^*(N)<\sigma_{12}(N)$ for any $N>10$. The exponent $\sigma_{12}(N)$ will come decisively into the detailed analysis of the intervals of existence of peaking solutions, as we shall see in the next steps. 

At this point, we introduce the roots of the polynomial $\Delta(m,\sigma,N)$ defined in \eqref{a1} with respect to $m$, which are well defined for $\sigma\neq\sigma_{12}(N)$:
\begin{equation}\label{a3}
	\begin{split}
	&M_{1,2}:=1+\frac{-\sigma^2+(6N+16)\sigma-4(N-1)\pm\sqrt{Q(\sigma;N)}}{D_1(\sigma,N)},\\
	&Q(\sigma;N):=(\sigma+2)(2N-2+\sigma)[(N-2)\sigma-2(N-10)],
	\end{split}
\end{equation}
where $M_2$ corresponds to the plus sign. Since we are dealing with the range $\sigma>\sigma_2(N)$, we remark that $Q(\sigma;N)>0$, hence $M_{1,2}$ are well defined. The analysis performed in this step readily implies that 

$\bullet$ if $\sigma\in(\sigma_2(N),\sigma_{12}(N))$, then $D_1(\sigma,N)>0$ and then $\Delta(m,\sigma,N)>0$ for $m\in(1,M_1)\cup(M_2,\infty)$ and $\Delta(m,\sigma,N)\leq0$ for $m\in[M_1,M_2]$. 

$\bullet$ if $\sigma\in(\sigma_{12}(N),\sigma_1(N))$ then $D_1(\sigma,N)<0$ and then $\Delta(m,\sigma,N)>0$ for $m\in(1,M_1)$ and $\Delta(m,\sigma,N)\leq0$ for $m\geq M_1$.

We thus conclude (at this point, to be extended in the next steps) that there are peaking solutions for any $p>p_{JL}(\sigma)$ in the following two cases: 

(a) $\sigma\in(\sigma_2(N),\sigma_{12}(N))$ and $m\in[M_1,M_2]$, or

(b) $\sigma\in(\sigma_{12}(N),\sigma_1(N))$ and $m\in[M_1,\infty)$.

\medskip 

\noindent \textbf{Step 3. Comparison between $p_{JL}(\sigma)$ and the maximum value of the polynomial $R(p)$.} We are left to study the ranges when $\Delta(m,\sigma,N)>0$, thus we assume this condition in the forthcoming steps. By differentiating $R(p)$ with respect to $p$ and equating the result to zero, we deduce that the unique critical point of the second degree polynomial $R$ is given by 
$$
p_{\max}:=\frac{m[\sigma^2+4(N-7)\sigma-12]-2(\sigma+2)(N-10)}{2[(N-2)\sigma-2(N-10)]}.
$$
We then observe that 
\begin{equation}\label{expr.dif}
p_{\max}-p_{JL}(\sigma)=\frac{(\sigma+2)E(m,\sigma,N)}{(N-4\sigma-10)(N-2)[(N-2)\sigma-2(N-10)]},
\end{equation}
with
\begin{equation}\label{expr.interm}
\begin{split}
E(m,\sigma,N)&:=\left[(-2[(N-2)\sigma-2(N-10)]\sqrt{(\sigma+2)(\sigma+2N-2)}\right.\\
&\left.+\frac{(2N^2-(11N-14)\sigma-22N+20)(N-10)}{2}\right]m\\
&-(N-2)(N-4\sigma-10)(N-10).
\end{split}
\end{equation}
Since $\sigma\in(\sigma_2(N),\sigma_1(N))$, the sign of the left hand side in \eqref{expr.dif} is the same as the sign of the expression $E(m,\sigma,N)$ defined in \eqref{expr.interm}, since the other factors in \eqref{expr.dif} are positive. Let us note at this point that, when $E(m,\sigma,N)<0$ we have $p_{JL}(\sigma)>p_{\max}$. Recalling that $R(p_{JL}(\sigma))<0$, $R(p_{\max})>0$ (from the assumption $\Delta(m,\sigma,N)>0$) and $R(p_1)>0$ (proved in \eqref{Rp}), we infer that whenever $E(m,\sigma,N)<0$ we have $p_1<p_{JL}(\sigma)$ and thus the exponent $p_1$ does not come into play in these ranges.

We next study the sign of the coefficient of $m$ in the expression $E(m,\sigma,N)$, which we denote by $C_m(\sigma,N)$. We have
$$
C_m(\sigma_1(N),N)=-\frac{3(N-10)^2(N-2)}{4}<0.
$$
Moreover, if we denote by 
$$
\sigma_3(N):=\frac{2(N-1)(N-10)}{11N-14}>0
$$
the unique zero of the second term in the coefficient $C_m(\sigma,N)$, we observe that $C_m(\sigma,N)<0$ for any $\sigma>\sigma_3(N)$. We note, on the one hand, that 
$$
\sigma_3(N)-\sigma^*(N)=\frac{8(N-1)(N-10)(N+2)}{3(11N-14)(5N-2)}>0,
$$
hence $\sigma_3(N)>\sigma^*(N)$. On the other hand, recalling that $\sigma_{12}(N)$ is the smallest root of $D_1(\sigma,N)$, we compute
$$
D_1(\sigma_3(N);N)=-\frac{128(N-1)(N+2)(N^2-14N+16)}{(11N-14)^2}<0,
$$
since $N^2-14N+16>0$ for $N\geq13$. We infer from basic properties of parabolas that $\sigma_3(N)>\sigma_{12}(N)$. The previous analysis thus proves that there is a peaking solution for any $p\geq p_{JL}(\sigma)$, provided $\sigma\in(\sigma_3(N),\sigma_1(N))$.

\medskip

\noindent \textbf{Step 4. Assume next that $\sigma\leq\sigma_3(N)$.} In this case, we can estimate the sign of the coefficient $C_m(\sigma,N)$ which multiplies $m$ in \eqref{expr.interm} by comparing the squares of the two terms of it and thus get rid of the square root. We then have 
\begin{equation*}
	\begin{split}
(2N\sigma-4N-4\sigma+40)^2&(\sigma+2)(\sigma+2N-2)-\frac{(2N^2-(11N-14)\sigma-22N+20)^2(N-10)^2}{4}\\
&=-\frac{(N-2)(N-4\sigma-10)}{4}P_3(\sigma,N),
	\end{split}
\end{equation*}
where 
\begin{equation*}
	\begin{split}
P_3(\sigma,N)&:=4(N-2)\sigma^3+(9N^2-44N+180)\sigma^2-(28N^3-572N^2+2928N-1232)\sigma\\
&+4(N-1)(N-10)(N^2-19N+82).
	\end{split}
\end{equation*}
The polynomial $P_3(\sigma,N)$ is a cubic polynomial with respect to $\sigma$. Moreover, it satisfies 
$$
P_3(\sigma_2(N),N)=\frac{4(N-10)^2(N^2-14N+16)^2}{(N-2)^2}>0,
$$
$$
P_3(\sigma_3(N),N)=-\frac{1024(N-1)(N-10)(N+2)(N^2-14N+16)^2}{(11N-14)^3}<0
$$
and 
$$
P_3(\sigma^*(N),N)=\frac{64(N-1)^2(N-10)(N+2)^2(23N^2-300N+124)}{27(5N-2)^3}>0,
$$
the latter holding true since $N\geq13$. We next compute the expression $P_3(\sigma_{12}(N),N)$. Due to the presence of the square root in the expression of $\sigma_{12}(N)$, the calculation becomes extremely technical. With the help of the Maple symbolic calculation software, one finds the following very tedious expression:
\begin{equation}\label{intermxx}
P_3(\sigma_{12}(N),N)=Q_1(N)\sqrt{3N^2-2N}+Q_2(N),
\end{equation}
with 
\begin{equation*}
\begin{split}
&Q_1(N):=128(-209N^3+596N^2-508N+112), \\
&Q_2(N):=128(362N^4-1153N^3+1204N^2-436N+32).
\end{split}
\end{equation*}
By employing a Taylor development centered at 13 (the minimal value of $N$ allowed at this point), one finds that $Q_1(N)<0$, $Q_2(N)>0$ for any $N\geq13$. Thus, with the aid of the same software and the same strategy of expressing it as a polynomial with respect to $N-13$, one can prove that 
$$
Q_1^2(N)(3N^2-2N)-Q_2^2(N)<0, \quad N\geq13,
$$
and conclude from \eqref{intermxx} that $P_3(\sigma_{12}(N),N)>0$. We deduce that $P_3$ has a root (as polynomial in $\sigma$) $\sigma_{02}(N)\in(\sigma_{12}(N),\sigma_3(N))$. Recalling that $P_3(\sigma,N)$ has been derived starting from the coefficient $C_m(\sigma,N)$, we infer from \eqref{expr.interm} that $E(m,\sigma,N)<0$ still holds true for $\sigma\in(\sigma_{02}(N),\sigma_3(N))$ and thus, proceeding as in Step 3, the existence of peaking solutions is guaranteed for any $p>p_{JL}(\sigma)$, provided $\sigma\in(\sigma_{02}(N),\sigma_1(N))$.

Letting then $\sigma\in(\sigma_2(N),\sigma_{02}(N))$, and denoting by $m_0(\sigma,N)$ the unique root of the expression $E(m,\sigma,N)$ with respect to $m$ (which can be made explicit, but we refrain from giving here its very tedious expression), then peaking solutions exist for $p>p_{JL}(\sigma)$ provided $1<m<m_0(\sigma,N)$. We can gather the outcome of the last two steps as follows: there are peaking solutions for any $p>p_{JL}$ if 

$\bullet$ either $\sigma_{02}(N)\leq\sigma<\sigma_1(N)$ and any $m>1$ 

$\bullet$ or $\sigma_2(N)<\sigma<\sigma_{02}(N)$ and $m\in(1,m_0(N))$.

\medskip 

\noindent \textbf{Step 5. Assume that $\sigma\in(\sigma_{12}(N),\sigma_{02}(N))$.} We conclude from Step 4 that peaking solutions exist for $m\in(1,m_0(\sigma,N))$ and $p\geq p_{JL}(\sigma)$. But we derive from \eqref{expr.dif} and \eqref{expr.interm} that, for $m=m_0(\sigma,N)$, we have $E(m_0(\sigma,N),\sigma,N)=0$ and thus $p_{\max}=p_{JL}(\sigma)$. Recalling \eqref{comp0}, we readily find that $R(p_{JL}(\sigma))<0$, whence $R(p_{\max})<0$. By the definition of $p_{\max}$, we deduce that $R(p)<0$ for any $p$, which in particular implies that the discriminant (defined in \eqref{a1}) $\Delta(m_0(\sigma,N),\sigma;N)$ is negative. It follows that $m_0(\sigma,N)>M_1$ (the only root lying above one if $\sigma>\sigma_{12}(N)$). Hence
$$
(1,m_0(\sigma,N))\cup [M_1,\infty)=(1,\infty),
$$
and the previous analysis, together with the result of Step 1, allows to conclude that there exist peaking solutions for any 
$$
m>1, \quad \sigma\in(\sigma_{12}(N),\sigma_1(N)), \quad p\geq p_{JL}(\sigma),
$$
completing thus the proof of the first item of Theorem \ref{th.sigmalarge}.

\medskip 

\noindent \textbf{Step 6. Assume that $\sigma_2(N)<\sigma<\sigma_{12}(N)$.} The same argument as in Step 5 allows now to prove that $\Delta(m_0(\sigma,N),\sigma,N)<0$ and thus $m_0(\sigma,N)\in[M_1,M_2]$, where $M_1$ and $M_2$ are the roots defined in \eqref{a3}. Joining this fact with the conclusion of Step 1, we deduce that there exist peaking solutions for 
$$
\sigma\in(\sigma_2(N),\sigma_{12}(N)), \quad m\in(1,M_2], \quad p\geq p_{JL}(\sigma).
$$
We have thus proved the first part of the second item of Theorem \ref{th.sigmalarge}. Next, if $m>M_2$ we are in the situation where $E(m,\sigma,N)>0$ and thus $p_{\max}>p_{JL}(\sigma)$. Since we are under the assumption $\Delta(m,\sigma,N)>0$, it follows that $R(p_{\max})>0>R(p_{JL}(\sigma))$ and we deduce that $p_{JL}(\sigma)$ is smaller than the smallest root of $R(p)$, which is, in this range, $p_L(\sigma)$. By the previous analysis, we find that 
\begin{equation}\label{a5}
R(p)<0, \quad p\in(p_{JL}(\sigma),p_L(\sigma))\cup(p_{L}^{+}(\sigma),\infty).
\end{equation}
Recalling the exponent $p_1$ introduced in \eqref{eq.p1}, we still need to compare $p_1$ to $p_{JL}(\sigma)$ in order to complete the intervals of existence of peaking self-similar solutions. We have shown in \eqref{compp1pJL} that $p_{JL}(\sigma)>p_1$ for $\sigma>\sigma^*(N)$, which includes the whole interval $\sigma\in(\sigma_2(N),\sigma_{12}(N))$ if $N<18$ and the interval $(\sigma^*(N),\sigma_{12}(N))$ when $N\geq18$, as shown in \eqref{compsigma}. We thus have established the existence of peaking solutions for all the interval in the right hand side of \eqref{a5} if 
$$
\max\{\sigma_2(N),\sigma^*(N)\}<\sigma<\sigma_{12}(N),
$$
provided $m>M_2$. 

Finally, if $N\geq18$, we are left with the interval $\sigma\in(\sigma_2(N),\sigma^*(N))$ where $p_1>p_{JL}(\sigma)$ if $m>m_2$, according to the arguments leading to \eqref{compp1pJL}. In this case $R(p_1)>0$, as shown in \eqref{Rp}, which ensures that $p_1\in(p_L(\sigma),p_L^+(\sigma))$. Since for $p\in (p_{JL}(\sigma),p_1)$ the condition \eqref{condAB} is not satisfied, we deduce from \eqref{a5} and this relative position of $p_1$ that the second interval in the right hand side of \eqref{a5} is the interval of $p$ for which the existence of peaking self-similar solutions is guaranteed, completing the proof of the second item of Theorem \ref{th.sigmalarge}. 

\medskip

\noindent \textbf{Remark.} In the same range $N\geq18$ and $\sigma\in(\sigma_2(N),\sigma^*(N))$, one can also show that $M_2<m_2$, where $M_2$ is defined in \eqref{a3} and $m_2$ is given by \eqref{m12}. This implies that, for $m\in[M_2,m_2]$, one has $p_1<p_{JL}(\sigma)<p_L(\sigma)$ and there exist peaking solutions for $p\in[p_{JL}(\sigma),p_L(\sigma))$. We refrain from giving a proof of the fact that $m_2>M_2$ here.

\bigskip

\noindent \textbf{Acknowledgements} The authors are partially supported by the Project PID2024-160967NB-I00 funded by AEI (Spain) and FEDER.

\bigskip

\noindent \textbf{Data availability} Our manuscript has no associated data.

\bigskip

\noindent \textbf{Conflict of interest} The authors declare that there is no conflict of interest.

\bibliographystyle{plain}

\end{document}